
\documentclass[preprint,12pt]{elsarticle}




\usepackage{amssymb}

\usepackage{setspace}
\usepackage[utf8x]{inputenc}
\usepackage{amsmath,amsthm,amssymb,fontenc,color,textcomp,amsfonts,graphicx,graphics}
\usepackage[T1]{fontenc}
\usepackage{hyperref}
\usepackage[nottoc,notlot,notlof]{tocbibind}
\hypersetup{
    colorlinks=true,
    linkcolor=black,
    filecolor=blue,      
    urlcolor=blue,
    }
\usepackage{fancyhdr}
\usepackage{xcolor}
\usepackage{tikz}
\usepackage{float}
\usepackage{ragged2e}

\usepackage[Sonny]{fncychap}
\usepackage{listings}
\usepackage{xcolor} \definecolor{Darkgreen}{rgb}{0,0.4,0}
\usepackage{color}
\definecolor{listinggray}{gray}{0.9}
\definecolor{lbcolor}{rgb}{0.9,0.9,0.9}

\definecolor{codegreen}{rgb}{0,0.6,0}
\definecolor{codegray}{rgb}{0.5,0.5,0.5}
\definecolor{codepurple}{rgb}{0.58,0,0.82}
\definecolor{backcolour}{rgb}{0.95,0.95,0.92}
\lstdefinestyle{mystyle}{
    backgroundcolor=\color{backcolour},   
    commentstyle=\color{codegreen},
    keywordstyle=\color{magenta},
    numberstyle=\tiny\color{codegray},
    stringstyle=\color{codepurple},
    basicstyle=\ttfamily\footnotesize,
    breakatwhitespace=true,         
    breaklines=true,                 
    captionpos=b,                    
    keepspaces=true,                 
    numbers=left,                    
    numbersep=5pt,                  
    showspaces=false,                
    showstringspaces=false,
    showtabs=false,                  
    tabsize=4
}

\lstset{style=mystyle}
\lstset{
backgroundcolor=\color{lbcolor},
    tabsize=4,    
    language=[GNU]C++,
        basicstyle=\scriptsize,
        upquote=true,
        aboveskip={1.5\baselineskip},
        columns=fixed,
        showstringspaces=false,
        extendedchars=false,
        breaklines=true,
        prebreak = \raisebox{0ex}[0ex][0ex]{\ensuremath{\hookleftarrow}},
        frame=single,
        numbers=left,
        showtabs=false,
        showspaces=false,
        showstringspaces=false,
        identifierstyle=\ttfamily,
        keywordstyle=\color[rgb]{0,0,1},
        commentstyle=\color[rgb]{0.026,0.112,0.095},
        stringstyle=\color[rgb]{0.627,0.126,0.941},
        numberstyle=\color[rgb]{0.205, 0.142, 0.73},
}
\lstset{
    backgroundcolor=\color{lbcolor},
    tabsize=4,
  language=C++,
  captionpos=b,
  tabsize=3,
  frame=lines,
  numbers=left,
  numberstyle=\tiny,
  numbersep=5pt,
  breaklines=true,
  showstringspaces=false,
  basicstyle=\footnotesize,
  keywordstyle=\color[rgb]{0,0,1},
  commentstyle=\color{Darkgreen},
  stringstyle=\color{red}
  }

\newtheorem{thm}{Theorem}[section]

\newtheorem{df}[thm]{Definition}
\newtheorem{remark}{Remark}

\newtheorem{acknowledgements}{Acknowledgements}
\journal{Japan J. Indust. Appl. Math.}

\begin{document}

\begin{frontmatter}



\title{4D Segmentation Algorithm with application to 3D+time Image Segmentation}


\author[inst1,inst2]{Markjoe Olunna UBA}\corref{cor1}
\cortext[cor1]{Corresponding author}
\ead{markjoeuba@gmail.com}

\affiliation[inst1]{organization={Department of Mathematics and Descriptive Geometry, Slovak University of Technology in Bratislava
},
            addressline={Radlinsk\'{e}ho 11}, 
            city={Bratislava},
            postcode={810 05}, 
            country={Slovakia}}

\affiliation[inst2]{organization={Department of Mathematics, University of Nigeria
},
            addressline={Nsukka - Onitsha Rd}, 
            city={Nsukka},
            postcode={410 001}, 
            country={Nigeria}}
\author[inst1]{Karol MIKULA}
\author[inst1]{Seol Ah PARK}


\begin{abstract}
In this paper, we introduce and study a novel segmentation method for 4D images based on surface evolution governed by a nonlinear partial differential equation, the generalized subjective surface equation. 
The new method uses 4D digital image information and information from a thresholded 4D image in a local neighborhood. Thus, the 4D image segmentation is accomplished by defining the edge detector function's input as the weighted sum of the norm of gradients of presmoothed 4D image and norm of presmoothed thresholded 4D image in a local neighborhood. Additionally, we design and study a numerical method based on the finite volume approach for solving the new model. The reduced diamond cell approach is used for approximating the gradient of the solution. 
We use a semi-implicit finite volume scheme for the numerical discretization and show that our numerical scheme is unconditionally stable. The new 4D method was tested on artificial data and applied to real data representing 3D+time microscopy images of cell nuclei within the zebrafish pectoral fin and hind-brain. In a real application, processing 3D+time microscopy images amounts to solving a linear system with several billion unknowns and requires over $1000$ GB of memory; thus, it may not be possible to process these images on a serial machine without parallel implementation utilizing the MPI. Consequently, we develop and present in the paper OpenMP and MPI parallel implementation of designed algorithms. Finally, we include cell tracking results to show how our new method serves as a basis for finding trajectories of cells during embryogenesis.
\end{abstract}



\begin{keyword}
Image segmentation \sep subjective surface method \sep level set method \sep finite volume method \sep semi-implicit scheme \sep cell microscopy
images \sep zebrafish.
\MSC[2010] 65M08 \sep 35K61 \sep 68U10.
\end{keyword}

\end{frontmatter}


\section{Introduction}
\label{sec:introduction}
\noindent In image processing and computer vision, it is well known that image segmentation is one of the fundamental and most studied problems. Consequently, there are several approaches to image segmentation in literature \cite{segm5, segm2, segm3, segm4, segm1, segm7, segm8, sethianb, segm6}. However, in this paper, we will
study and use the level set method for image segmentation. The idea of following the interfaces in immiscible fluid flow by the Eulerian approach was proposed by Dervieux and Thomasset \cite{levelset1, levelset2}, and Osher and Sethian \cite{oshersethian} then introduced the general level set methodology. The level set method is the
evolution of curves and surfaces by means of a dynamical embedding function (often referred to as the level set function). In other words, the level set method is the implicit representation of boundaries,
curves (or surfaces). Furthermore, in the level set formulation, the subjective surface (SUBSURF) method for image segmentation
will be the focus of this paper. This method, in the context of 
image processing, was introduced in \cite{sartietal, subsurf}, studied 
and applied in several biomedical research \cite{cmssga, kmnapmr, kmikula, app2, sartietal, subsurf}. The following works, \cite{baf, Dyballaetal, faure, parketal}, represent recent developments and
applications of the SUBSURF method. This method is used in Bioemergences workflow \cite{faure} (http://bioemergences.iscpif.fr/workflow/).
SUBSURF segmentation method is based on the idea of evolution of segmentation function governed by a nonlinear diffusion equation \cite{cmssga, sartietal, subsurf, zcrmbmpa}, which can be understood as an advection-diffusion model.
Hence, a segmentation seed (the starting point that determines
the approximate position of an object in the image) is usually needed to segment an object. Then an initial
segmentation function $u^0 (x)$ is constructed with reference to the segmentation seed. 
Finally, this segmentation function is allowed to evolve to the final state following the SUBSURF model. Ideally, the evolution process ends up with a function whose isosurfaces all have the object's shape
that is intended to be segmented.\\

\noindent In this paper, we introduce a generalization of the classical SUBSURF model. This generalization is due to the fact that in real applications where the object intended to be segmented possesses internal structures or edges, it is usually challenging to obtain optimal results using the classical SUBSURF segmentation approach. The reason is that this approach works with edge information throughout the segmentation process. Hence, edges within the internal structures in an object of interest are also respected during segmentation. To overcome the effect of the internal structures or edges, we introduced thresholding of image intensity values within a ball of appropriate radius around the object center. This local thresholding serves to eliminate the internal structures or edges. Additionally, we combined the information obtained from thresholding and original image intensities to get an accurate final segmentation result. Unlike the works in \cite{kmikula, msarti, app2}, on the numerical approximation of the subjective surface method, we utilized the rescaling of values of the segmentation function at each segmentation step to the interval [0,1], which makes the final decision on segmentation contour much easier and automatic. The computation task involving 4D microscopy images requires solving a linear system with several billion unknowns. This 4D high-scale computing problem requires over 1000 GB of memory and may not be possible to be solved on a serial machine without parallel implementation utilizing the MPI. Thus, we developed the parallelization of the 4D numerical method, which was
not done anywhere before, and showed its properties like unconditional stability and experimental order of convergence. Finally, we developed an efficient parallel implementation of the method for the generalized model in the case of 4D images.\\

\noindent The remaining part of this work is organized as follows: Section $2$ provides an overview of the SUBSURF method and some numerical
examples of the application of the SUBSURF segmentation method to artificial and real data.  Section $3$ contains the details of the proposed generalization of the classical method. It contains the studied model combining thresholded image information and original image data, the numerical scheme for solving the model, the stability of the numerical scheme, experimental order of convergence, an overview of the computer implementation, and parallel implementation using OpenMP and MPI.
Furthermore, the application of the new segmentation algorithm to $4$D image segmentation was presented in Section $4$. Finally, cell tracking based on the result of the new segmentation algorithm was presented in Section $5$.

\section{The SUBSURF segmentation method}\label{subsurfsection}
\noindent The SUBSURF segmentation method is based on the idea of evolution of segmentation function, which is governed by an advection-diffusion model. The SUBSURF model was proposed by Sarti, Malladi, and  Sethian \cite{subsurf} and is given by
\begin{eqnarray}\label{firsts}
  \frac{\partial u}{\partial t} = |\triangledown u|\triangledown \cdot \Big(g^0 \frac{\triangledown u}{|\triangledown u|} \Big) \text{ in } (0,T] \times \Omega,\\ \nonumber
 \end{eqnarray}
 where $u(t, x)$ is the unknown segmentation function, $g^0 = g(|\triangledown G_{\sigma}\ast I^0|)$, $I^0$ is image to be segmented, and $G_{\sigma}$ is a smoothing kernel. $g : \mathbb{R}_0^+ \rightarrow \mathbb{R^+}$ is a smooth nonincreasing function such that $g(0)=1$ and $0<g(s)\rightarrow 0$ as $s\rightarrow \infty$. The function $g$ was introduced by Perona and Malik in
\cite{pm} and has the following typical example:
\begin{eqnarray}\label{eg1}
 g(s)=\frac{1}{1+Ks^2}, ~ K\geq 0.
\end{eqnarray}
\noindent Equation (\ref{firsts}) is accompanied by
Dirichlet boundary conditions
\begin{eqnarray}\label{first1}
 u(t,x) = u^D \text{ on } [0,T] \times \partial \Omega,
\end{eqnarray}
or Neumann boundary conditions
\begin{eqnarray}\label{first2}
 \frac{\partial u}{\partial {n}}(t,x) = 0 \text{ on } [0,T] \times \partial \Omega,
\end{eqnarray}
where ${n}$ is unit normal to $\partial \Omega$, and with the initial condition
\begin{eqnarray}\label{first3}
 u(0,x) = u^0(x) \text{ in } \Omega.
\end{eqnarray}
Dirichlet boundary condition is used in the image segmentation, and without loss of generality $u^D = 0$ may be assumed. The zero Neumann boundary
conditions are used, e.g., in morphological image smoothing (see,
e.g., \cite{cmssga} and references therein).\\

\noindent In computations involving model {\rm(\ref{firsts})}, one possible choice of initial segmentation function, $u^0$, is the following function 
 \begin{eqnarray}\label{initialcond}
  u^0(x)=\frac{1}{|s-x|+v} ~,
 \end{eqnarray}
where $s$ corresponds to the focus point, and $\frac{1}{v}$ gives a maximum of $u^0$ whose peak is centered in a ``focus point'' inside the segmented object \cite{app2}. This function can also be considered only at a circle with center $s$ and radius $R$. Outside this circle, the value of $u^0$ is taken to be $\frac{1}{R+v}$. If {\rm(\ref{firsts})} is accompanied by zero Dirichlet boundary condition, then finally $\frac{1}{R+v}$ is subtracted from such peak-like profile. In the case of small objects, a smaller choice of value for $R$ can be used to speed up computations. Also, $u^0(x) = 1 - \frac{|x-s|}{R}$ is another possible choice of initial segmentation function.\\

\noindent  We now present some numerical and illustrative examples to demonstrate the application of the SUBSURF model to image segmentation. The model (\ref{firsts}) for surface reconstruction was tested on artificial
examples and applied to real data 
representing 3D microscopy images of cell nuclei. For the first numerical experiment, a sphere and a sphere with holes were generated, as can be seen in the first column of Figure \ref{figure717}. Afterward, model (\ref{firsts}) was used to reconstruct the shapes of these spheres, and the results obtained after reconstruction are shown in the second column of Figure \ref{figure717}. In the second experiment, the model (\ref{firsts}) was applied to real data representing 3D microscopy images of cell nuclei and membrane. In the first column of Figures \ref{figure29}, \ref{figure911}, and \ref{figure922}, 3D image of these microscopy images are shown, whereas, in the second column of Figures \ref{figure29}, \ref{figure911}, and \ref{figure922} the results obtained after application of model (\ref{firsts}) are shown and colored in black.
\begin{figure}[H]
	\centering
	\begin{minipage}{0.24\textwidth}
		\centering
		\includegraphics[width=\textwidth]{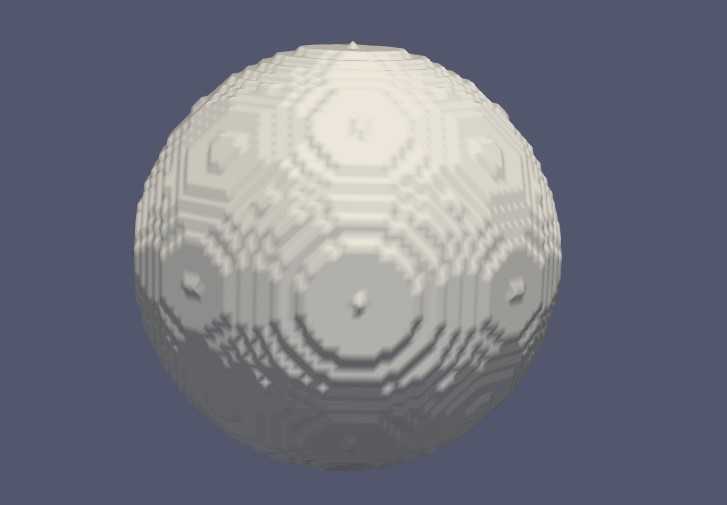}
	\end{minipage}
	\vspace{0.045cm}
	\begin{minipage}{0.24\textwidth}
		\centering
		\includegraphics[width=\textwidth]{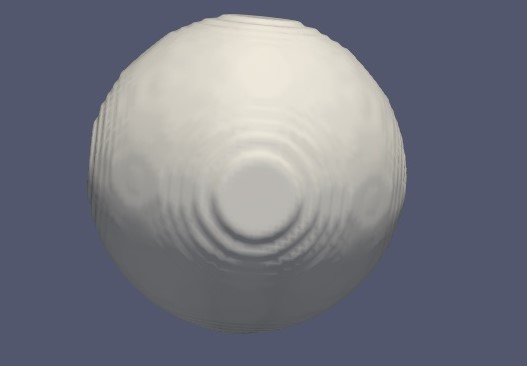}
	\end{minipage}
	\vspace{0.045cm}
	
	\begin{minipage}{0.24\textwidth}
		\centering
		\includegraphics[width=\textwidth]{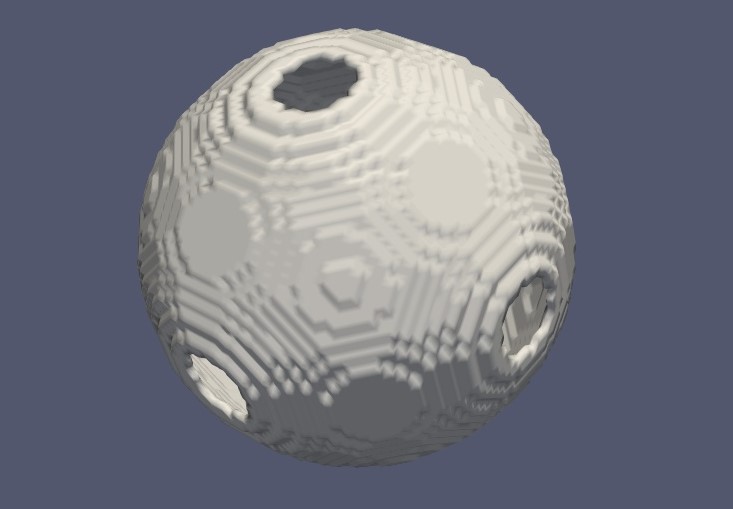}
	\end{minipage}
	\vspace{0.045cm}
	\begin{minipage}{0.24\textwidth}
		\centering
		\includegraphics[width=\textwidth]{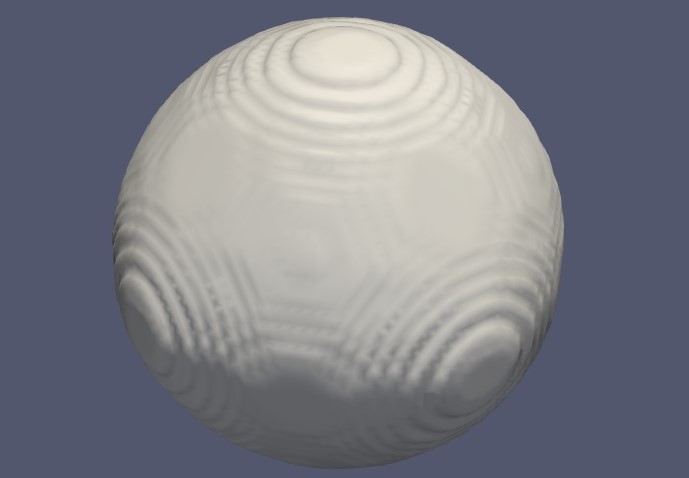}
	\end{minipage}
	\vspace{0.045cm}
	
	\caption{First column shows a solid sphere and sphere with six symmetric holes, whereas the second column shows results of the segmentation after the application of the model (\ref{firsts}).}
	\label{figure717}
\end{figure}

\begin{figure}[H]
\centering
	\begin{minipage}{0.24\textwidth}
		\centering
		\includegraphics[width=\textwidth]{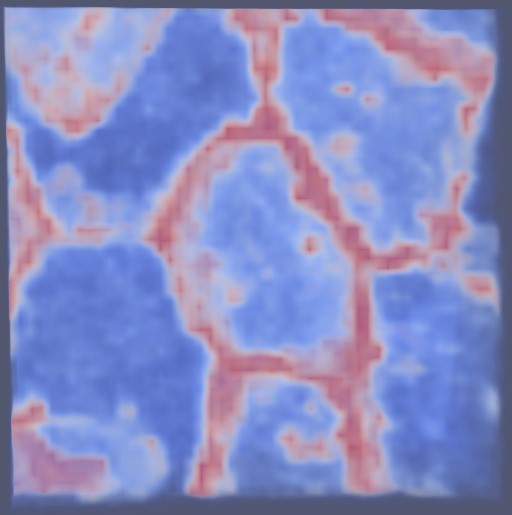}
	\end{minipage}
	\vspace{0.045cm}
	\begin{minipage}{0.24\textwidth}
		\centering
		\includegraphics[width=\textwidth]{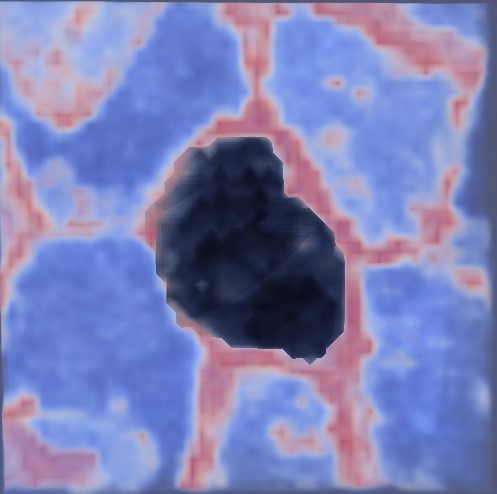}
	\end{minipage}
	\vspace{0.045cm}

	\caption{First column of this figure shows the 3D microscopy image of cell membrane to be segmented, whereas the second column shows the result after application of the model (\ref{firsts}); the cell membrane of interest and its corresponding result after segmentation (which are depicted in black) is located approximately at the center of each picture.}
	\label{figure29}
\end{figure}

 \begin{figure}[H]
 \centering
	\begin{minipage}{0.24\textwidth}
		\centering
		\includegraphics[width=\textwidth]{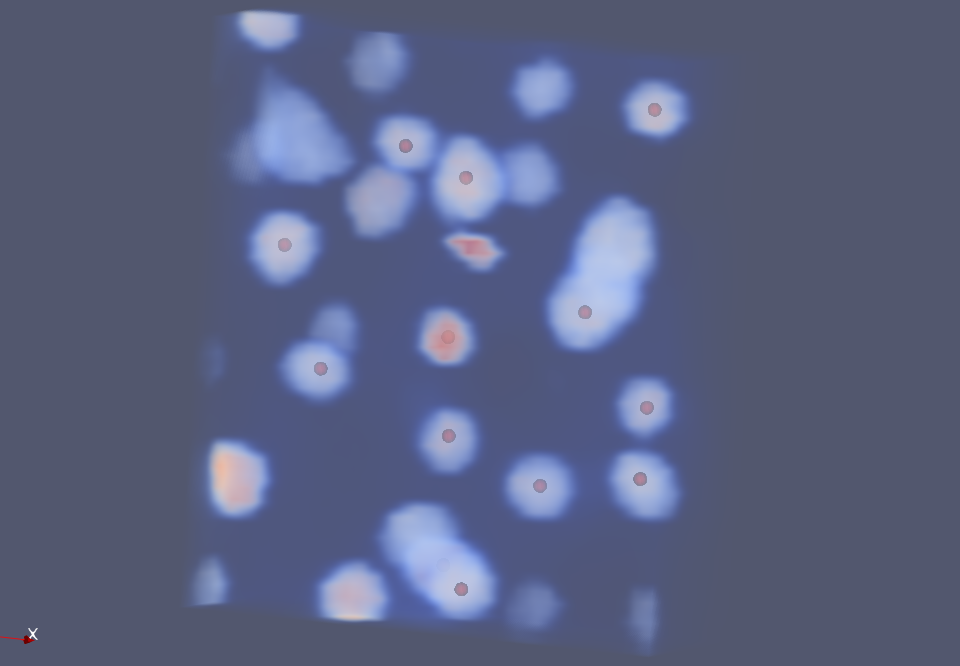}
	\end{minipage}
	\vspace{0.035cm}
	\begin{minipage}{0.24\textwidth}
		\centering
		\includegraphics[width=\textwidth]{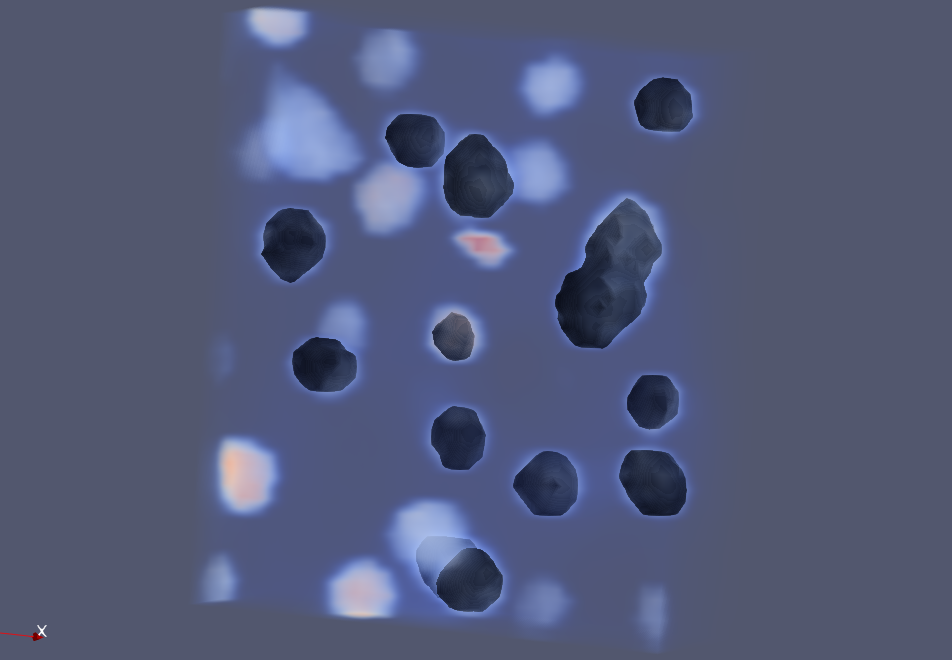}
	\end{minipage}
	\vspace{0.035cm}
	
	\caption{First column of this figure shows the 3D microscopy image of cell nuclei together with approximate cell centers (these are points marked in red color), whereas the second column shows the 3D image of microscopy image of cell nuclei after application of (\ref{firsts}) to the cells with centers marked in red color.}
	\label{figure911}
\end{figure}
\begin{figure}[H]
\centering
	\begin{minipage}{0.24\textwidth}
		\centering
		\includegraphics[width=\textwidth]{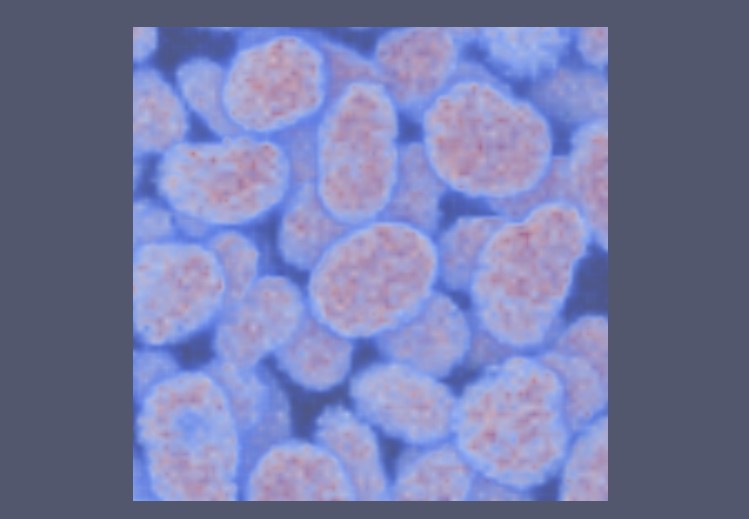}
	\end{minipage}
	\vspace{0.035cm}
	\begin{minipage}{0.24\textwidth}
		\centering
		\includegraphics[width=\textwidth]{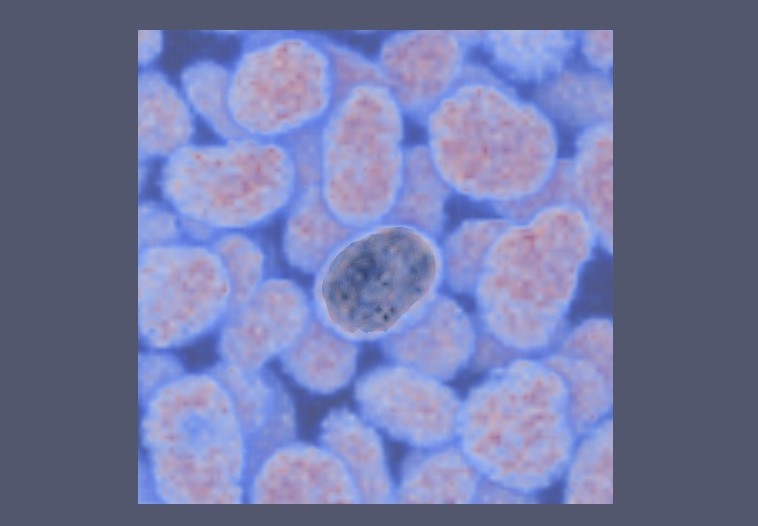}
	\end{minipage}
	\vspace{0.035cm}
	
	\begin{minipage}{0.24\textwidth}
		\centering
		\includegraphics[width=\textwidth]{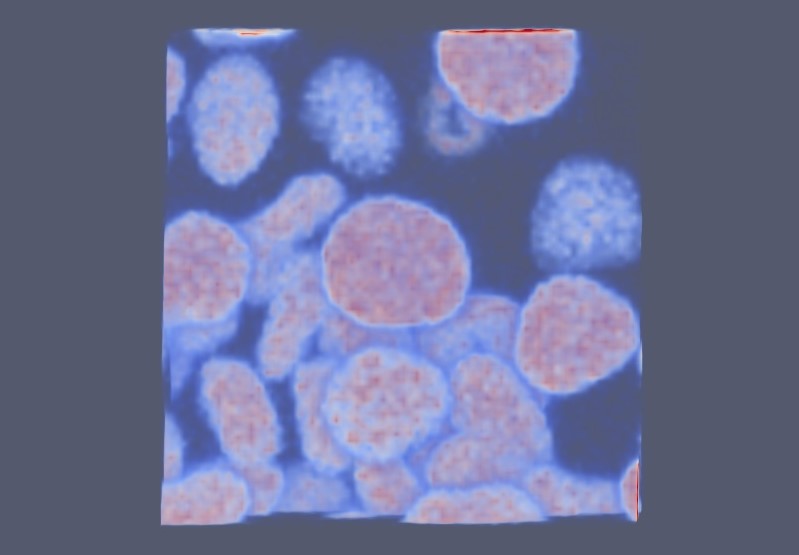}
	\end{minipage}
	\vspace{0.035cm}
	\begin{minipage}{0.24\textwidth}
		\centering
		\includegraphics[width=\textwidth]{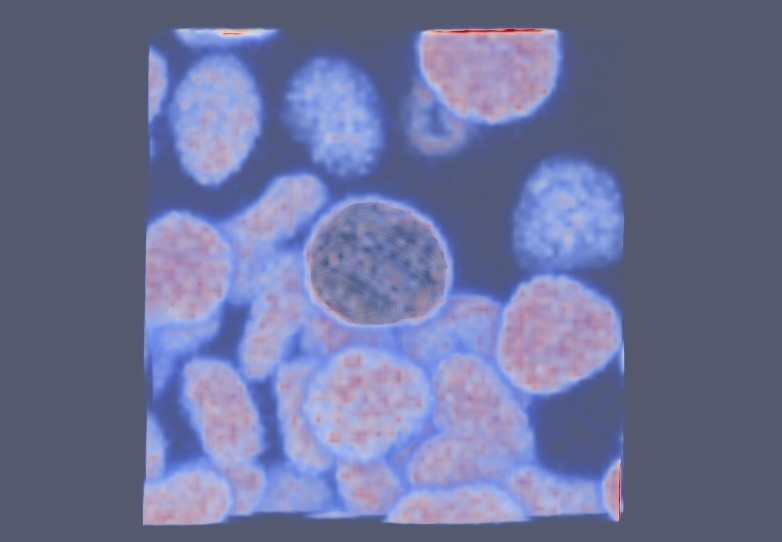}
	\end{minipage}
	\vspace{0.035cm}
	
	\caption{First column of this figure shows the 3D microscopy images of cell nuclei to be segmented, whereas the second column shows the result after application of the model (\ref{firsts}); the cell nuclei of interest and their corresponding results after segmentation (which are depicted in black) are located approximately at the center of each picture.}
	\label{figure922}
\end{figure}

\clearpage
\section{4D Image Segmentation Algorithm}\label{mainresults}
\noindent In the previous section, we presented the SUBSURF segmentation method and some of its successful applications.  
However, in many real applications where the object to be segmented has internal structures or edges, it is usually 
challenging to obtain optimal results using the SUBSURF segmentation approach (see, e.g., Figures \ref{mouse_figure222}, \ref{figure222}, and \ref{figure444}). The reason is that
this approach works with edge information throughout the segmentation process. Hence, edges within the internal structures in
an object of interest are also respected during segmentation. For instance, Figures \ref{mouse_figure222}, \ref{figure222}, and \ref{figure444} show the results of segmentation using the classical SUBSURF approach for $3$D images of zebrafish cell nuclei, cell membrane, and mouse embryo cell nuclei, respectively. The results of segmentation in these figures are colored in black, and the shapes of interest are located approximately at the center of each image.

\begin{figure}[H]
\centering
	\begin{minipage}{0.34\textwidth}
		\centering
		\includegraphics[width=\textwidth]{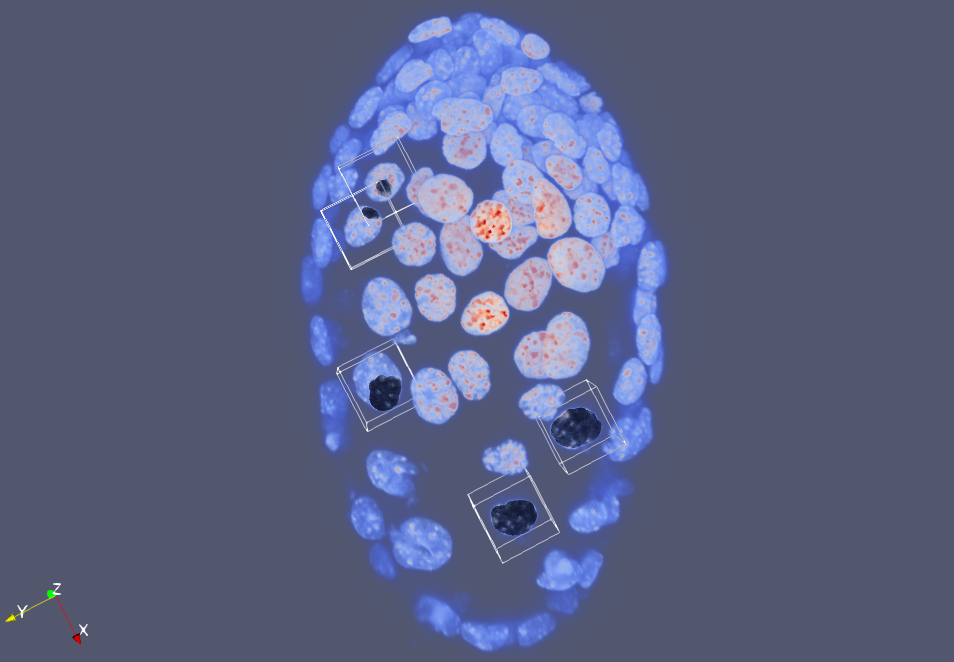}
	\end{minipage}
	\vspace{0.045cm}
	
	\caption{This figure shows the $3$D microscopy image of mouse embryo cell nuclei segmentation result using classical SUBSURF approach.}
	\label{mouse_figure222}
\end{figure}

\begin{figure}[H]
\centering
\begin{minipage}{0.24\textwidth}
		\centering
		\includegraphics[width=\textwidth]{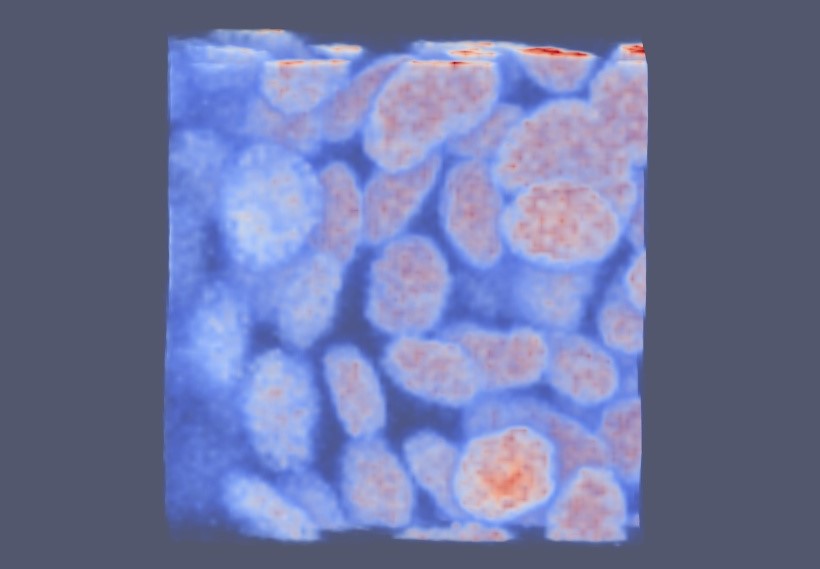}
	\end{minipage}
	\vspace{0.035cm}
	\begin{minipage}{0.24\textwidth}
		\centering
		\includegraphics[width=\textwidth]{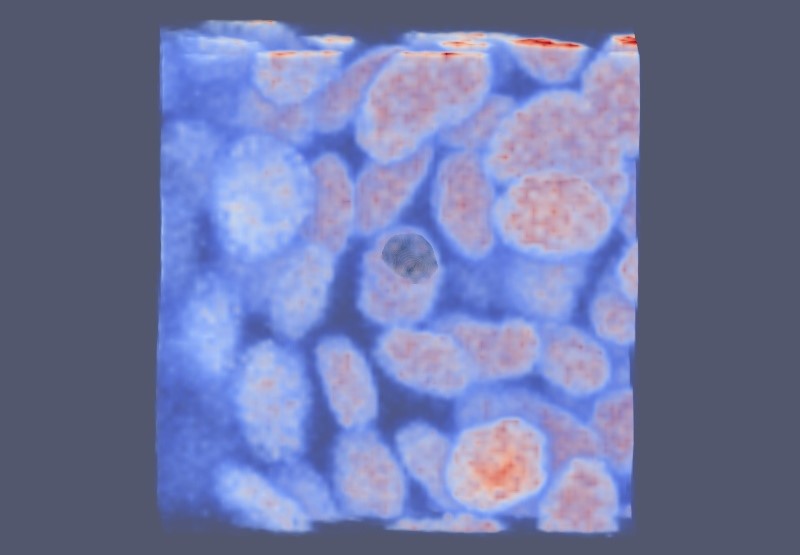}
	\end{minipage}
	\vspace{0.035cm}
	
	\begin{minipage}{0.24\textwidth}
		\centering
		\includegraphics[width=\textwidth]{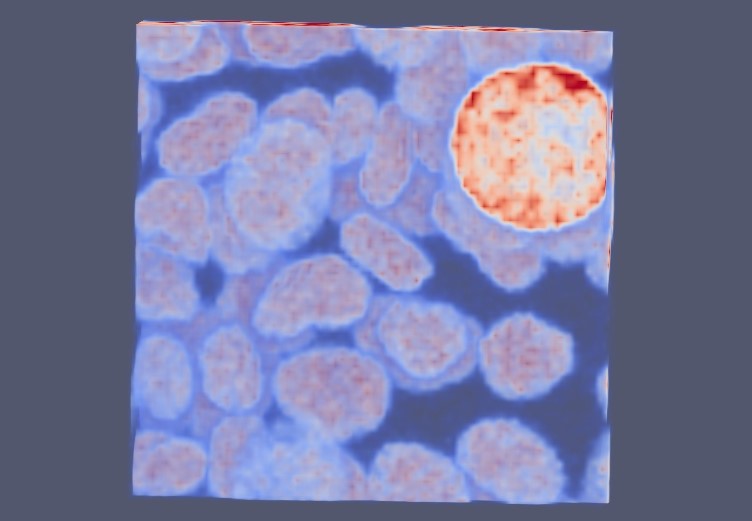}
	\end{minipage}
	\vspace{0.035cm}
	\begin{minipage}{0.24\textwidth}
		\centering
		\includegraphics[width=\textwidth]{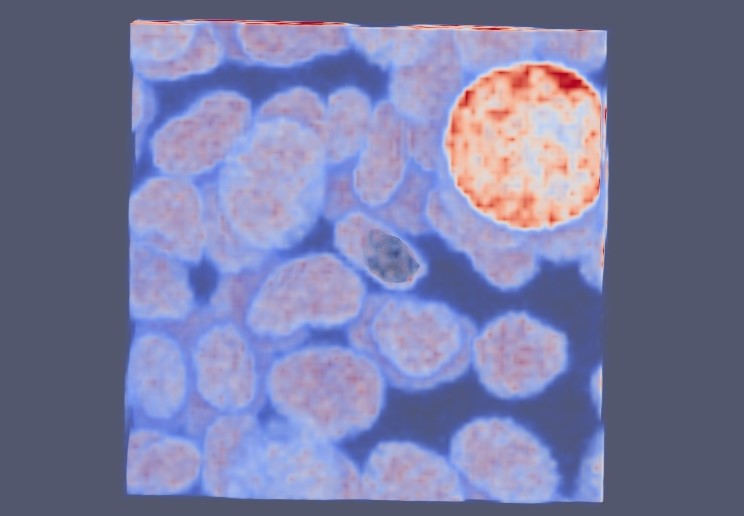}
	\end{minipage}
	\vspace{0.035cm}
	
	\caption{First column of this figure shows the 3D microscopy images of cell nuclei to be segmented, whereas the second column shows 
	the result of segmentation using the classical SUBSURF approach.}
	\label{figure222}
\end{figure}

\begin{figure}[H]
\centering
	\begin{minipage}{0.24\textwidth}
		\centering
		\includegraphics[width=\textwidth]{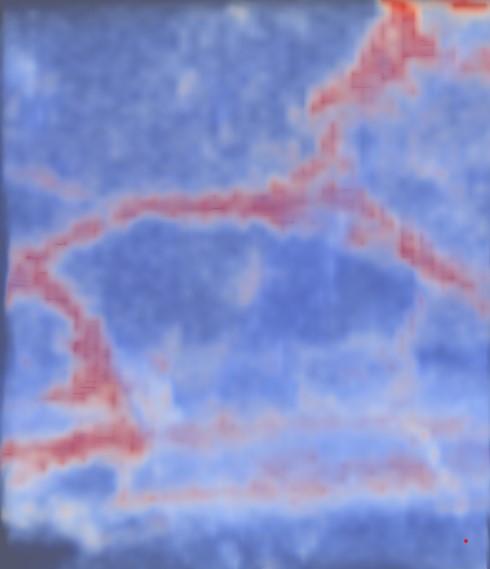}
	\end{minipage}
	\vspace{0.045cm}
	\begin{minipage}{0.24\textwidth}
		\centering
		\includegraphics[width=\textwidth]{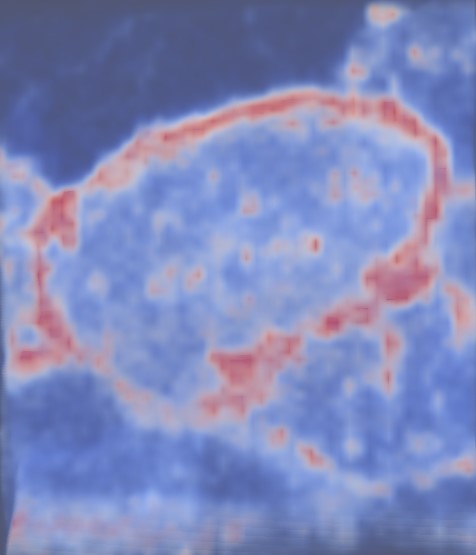}
	\end{minipage}
	\vspace{0.045cm}
	\begin{minipage}{0.27\textwidth}
		\centering
		\includegraphics[width=\textwidth]{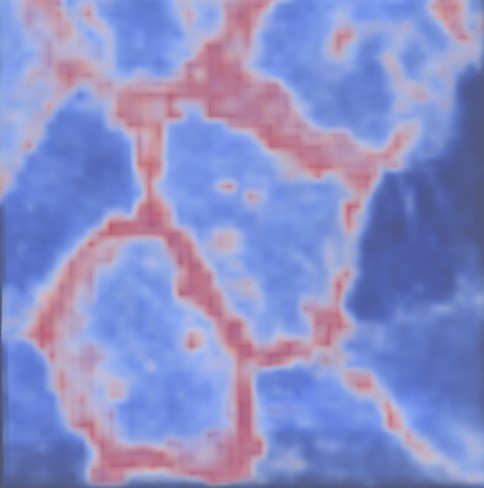}
	\end{minipage}
	\vspace{0.045cm}
	
	\begin{minipage}{0.24\textwidth}
		\centering
		\includegraphics[width=\textwidth]{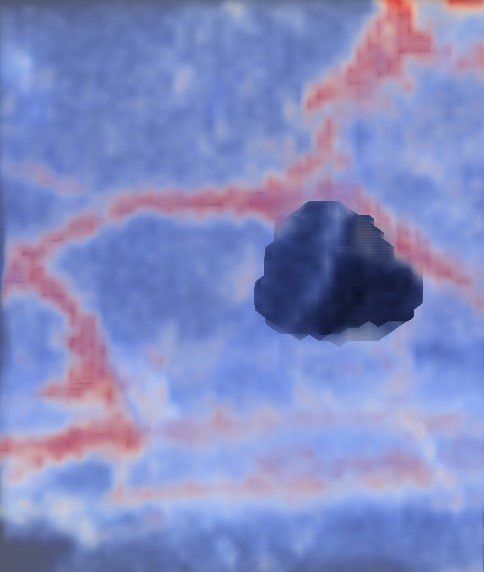}
	\end{minipage}
	\vspace{0.045cm}
	\begin{minipage}{0.24\textwidth}
		\centering
		\includegraphics[width=\textwidth]{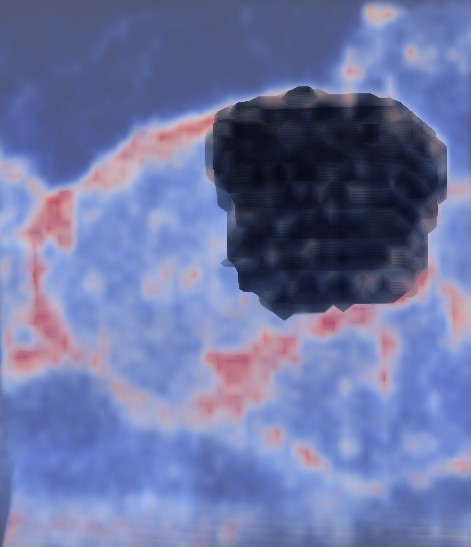}
	\end{minipage}
	\vspace{0.045cm}
	\begin{minipage}{0.27\textwidth}
		\centering
		\includegraphics[width=\textwidth]{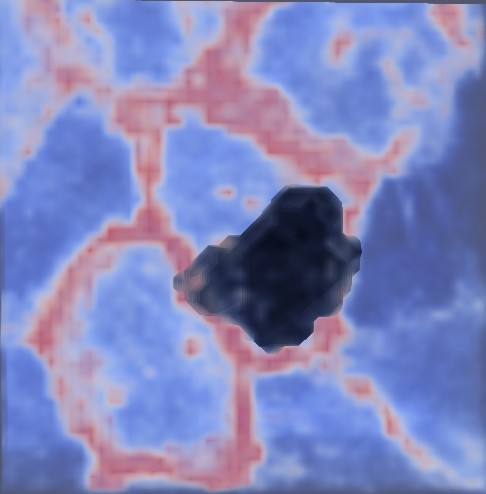}
	\end{minipage}
	\vspace{0.045cm}
	
	\caption{First row of this figure shows the 3D microscopy images of cell membranes to be segmented, while the second row shows 
	the result of segmentation using the classical SUBSURF approach.}
	\label{figure444}
\end{figure}

\noindent In this section, to overcome the effect of the internal structures or edges, we 
introduce the thresholding of values within a ball of appropriate radius around the object center. This local thresholding serves to
eliminate the internal structures or edges. Finally, we combine the information given by thresholding and original image intensities
to get a segmentation result. For example, in $3$D segmentation (see also \cite{ubaetal, ubaetalm}) of nuclei and membranes images shown in Figures \ref{mouse_figure222}--\ref{figure444} using this approach, we obtained the results presented in Figures \ref{mouse_figure1}--\ref{figure5}, which is clearly much more accurate compared to the results, shown in Figures \ref{mouse_figure222}--\ref{figure444}, obtained using the classical method.

\begin{figure}[H]
\centering
	\begin{minipage}{0.44\textwidth}
		\centering
		\includegraphics[width=\textwidth]{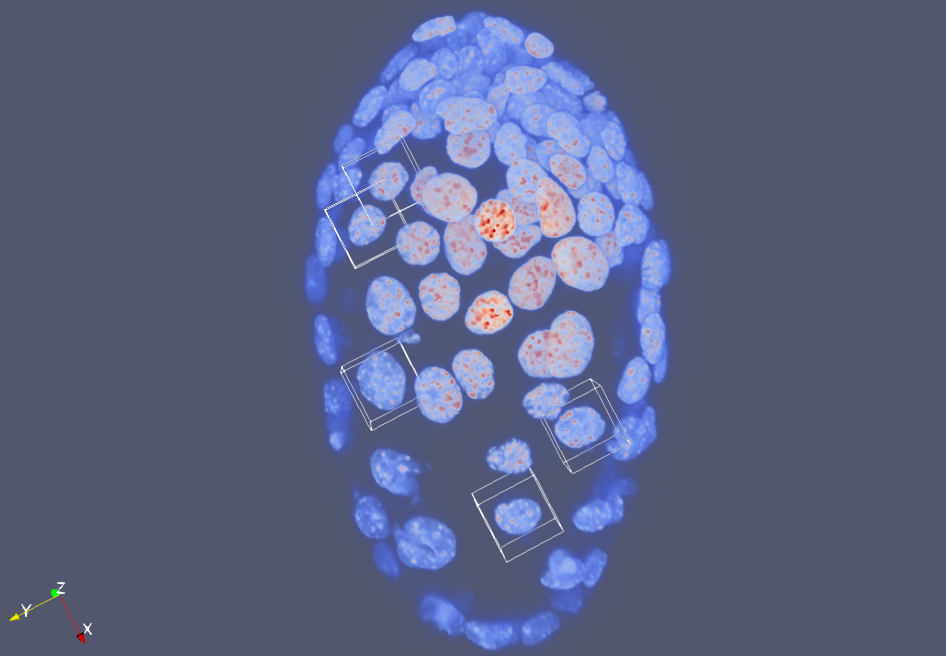}
	\end{minipage}
	\vspace{0.045cm}
	\begin{minipage}{0.44\textwidth}
		\centering
		\includegraphics[width=\textwidth]{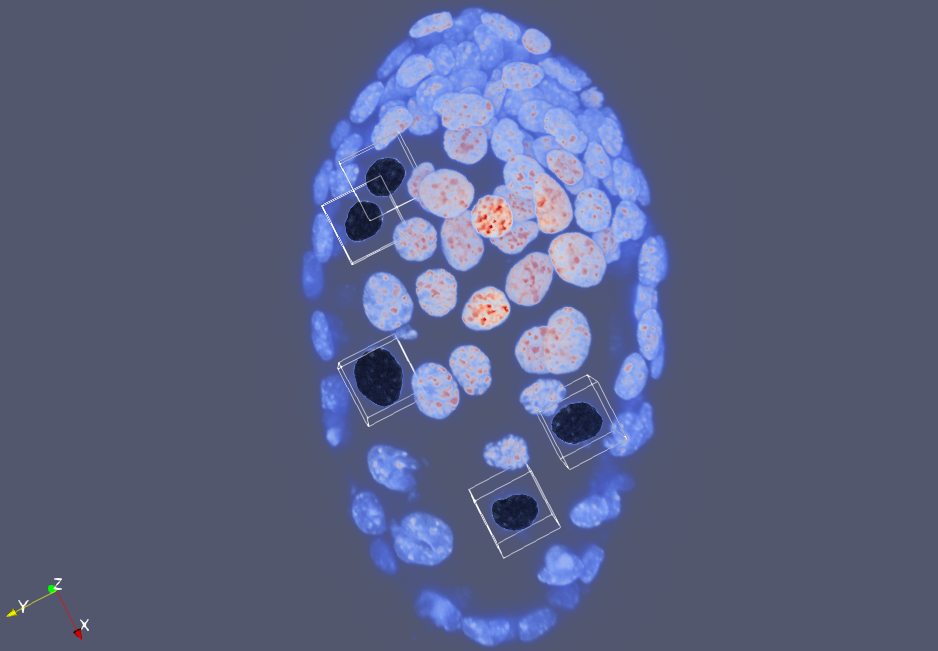}
	\end{minipage}
	\vspace{0.045cm}
	
	\caption{This figure shows the $3$D microscopy image of mouse embryo cell nuclei.}
	\label{mouse_figure1}
\end{figure}
\begin{figure}[H]
\centering
	\begin{minipage}{0.24\textwidth}
		\centering
		\includegraphics[width=\textwidth]{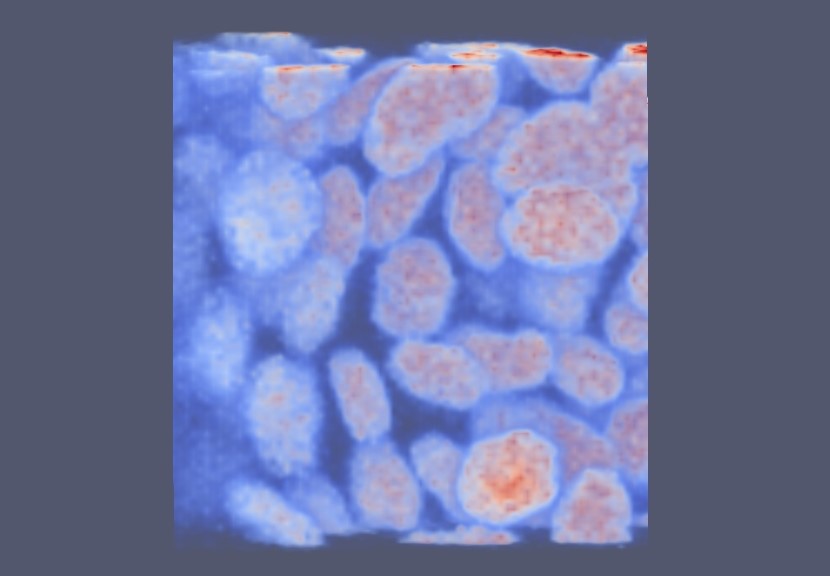}
	\end{minipage}
	\begin{minipage}{0.24\textwidth}
		\centering
		\includegraphics[width=\textwidth]{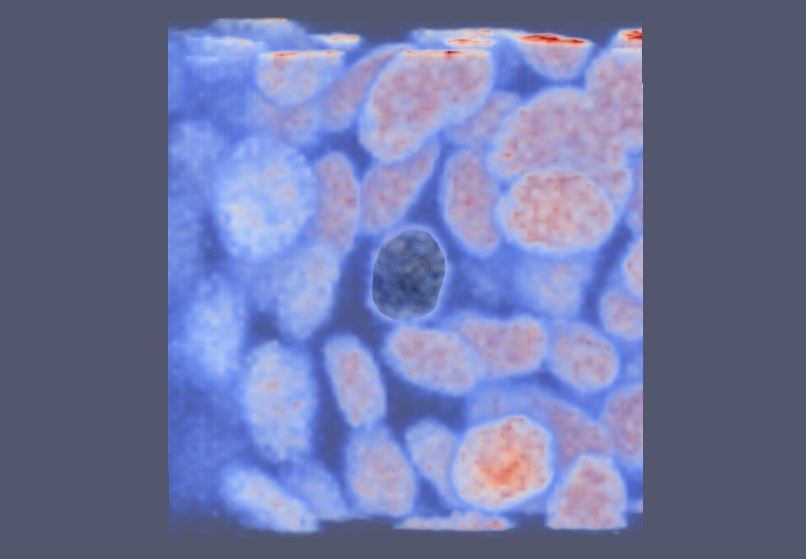}
	\end{minipage}
	\vspace{0.045cm}
	
	\vspace{0.045cm}
	\begin{minipage}{0.24\textwidth}
		\centering
		\includegraphics[width=\textwidth]{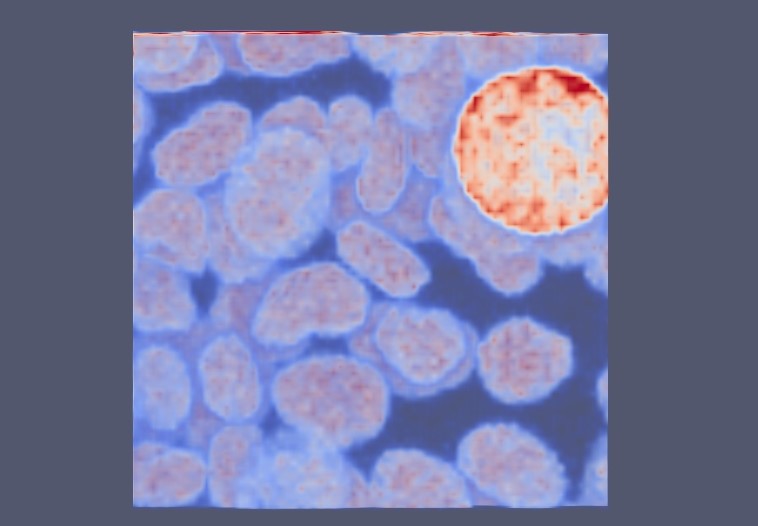}
	\end{minipage}
	\vspace{0.045cm}
	\begin{minipage}{0.24\textwidth}
		\centering
		\includegraphics[width=\textwidth]{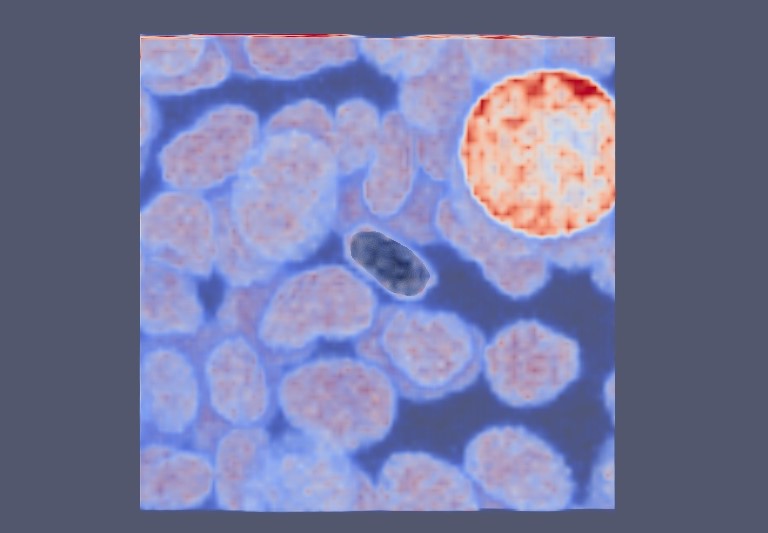}
	\end{minipage}
	\vspace{0.045cm}

	\caption{In this figure, first row shows the $3$D microscopy images of four different cell nuclei; second row shows
	segmentation result using thresholded image intensity information
	and original image intensity information in 3D case of (\ref{first}). That is, the result after  application of the 3D case of (\ref{first}) with $\delta = 0.5$  and $\vartheta = 0.5$.}
	\label{figure44}
\end{figure}
\begin{figure}[H]
\centering
	\begin{minipage}{0.2\textwidth}
		\centering
		\includegraphics[width=\textwidth]{membrane1tobesegmented.jpg}
	\end{minipage}
	\vspace{0.045cm}
	\begin{minipage}{0.2\textwidth}
		\centering
		\includegraphics[width=\textwidth]{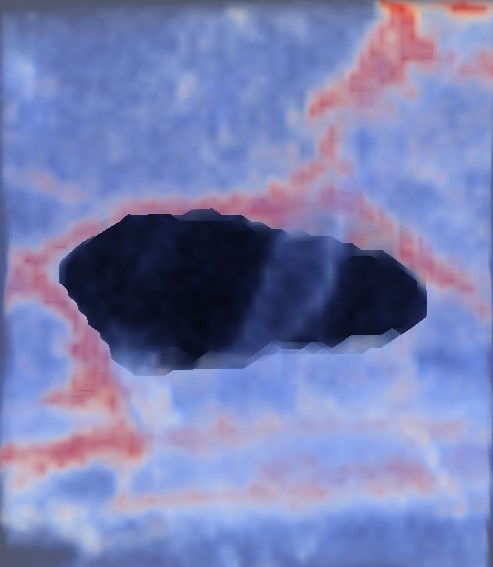}
	\end{minipage}
	\vspace{0.045cm}
	\begin{minipage}{0.2\textwidth}
		\centering
		\includegraphics[width=\textwidth]{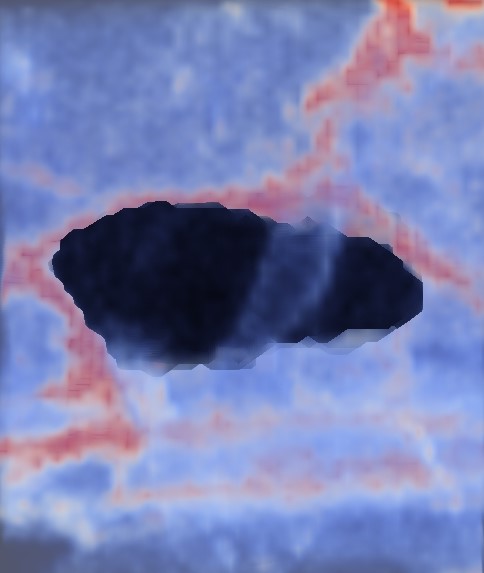}
	\end{minipage}
	\vspace{0.045cm}
	\begin{minipage}{0.2\textwidth}
		\centering
		\includegraphics[width=\textwidth]{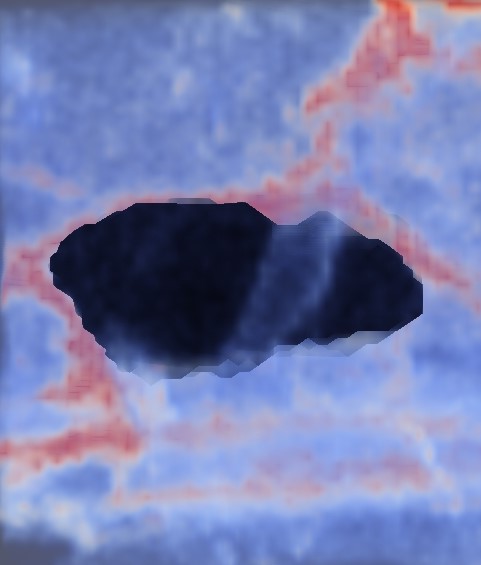}
	\end{minipage}
	\vspace{0.030cm}
	
	\begin{minipage}{0.2\textwidth}
		\centering
		\includegraphics[width=\textwidth]{membrane2tobesegmented.jpg}
	\end{minipage}
	\vspace{0.045cm}
	\begin{minipage}{0.2\textwidth}
		\centering
		\includegraphics[width=\textwidth]{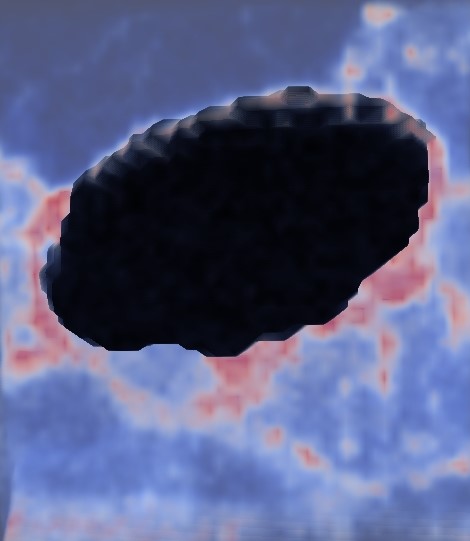}
	\end{minipage}
	\vspace{0.045cm}
	\begin{minipage}{0.2\textwidth}
		\centering
		\includegraphics[width=\textwidth]{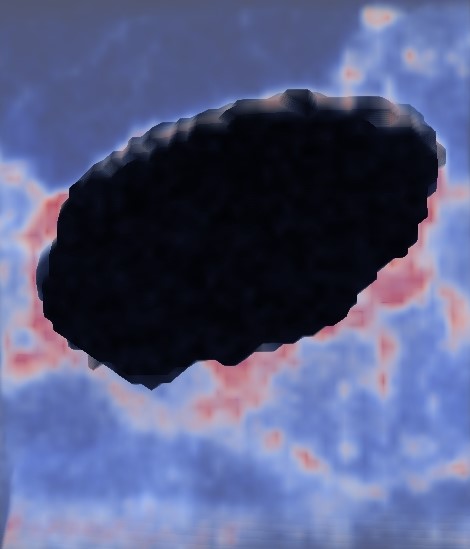}
	\end{minipage}
	\vspace{0.045cm}
	\begin{minipage}{0.2\textwidth}
		\centering
		\includegraphics[width=\textwidth]{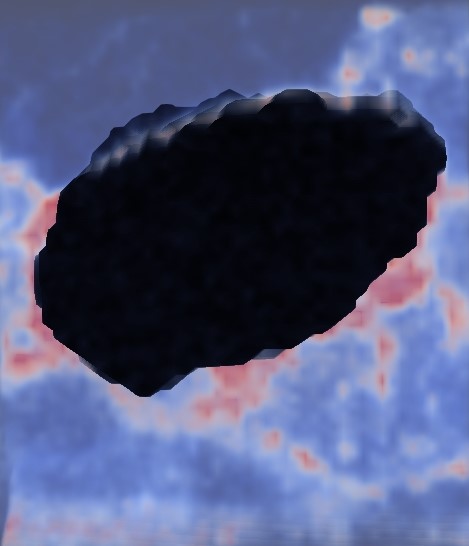}
	\end{minipage}
	\vspace{0.045cm}
	
	\begin{minipage}{0.2\textwidth}
		\centering
		\includegraphics[width=\textwidth]{membrane4tobesegmented.jpg}
	\end{minipage}
	\vspace{0.045cm}
	\begin{minipage}{0.2\textwidth}
		\centering
		\includegraphics[width=\textwidth]{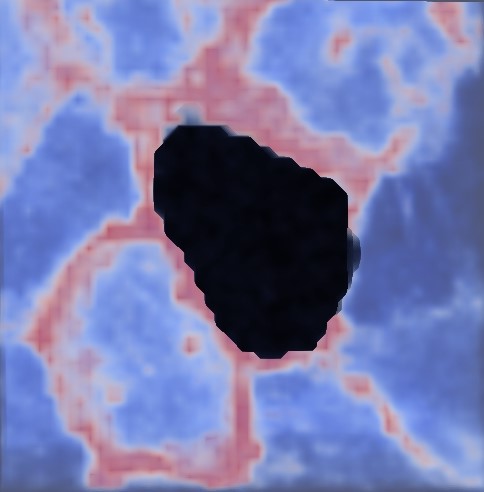}
	\end{minipage}
	\vspace{0.045cm}
	\begin{minipage}{0.2\textwidth}
		\centering
		\includegraphics[width=\textwidth]{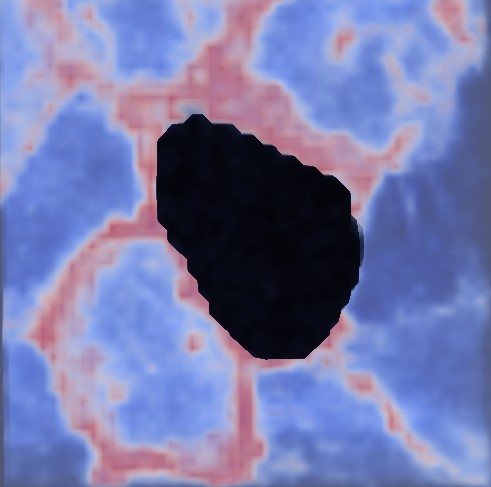}
	\end{minipage}
	\vspace{0.045cm}
	\begin{minipage}{0.2\textwidth}
		\centering
		\includegraphics[width=\textwidth]{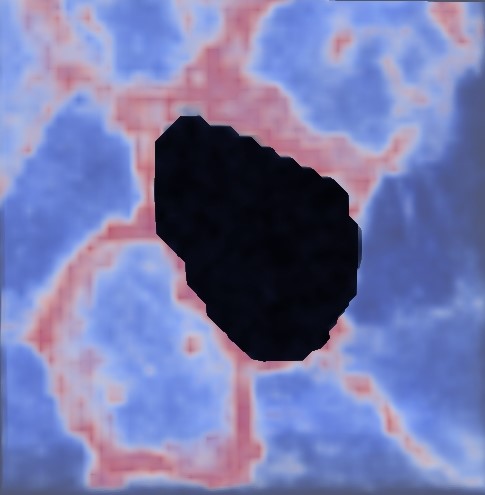}
	\end{minipage}
	\vspace{0.045cm}
	
	\caption{First column of this figure shows the $3$D microscopy images  
	of cell membrane to be segmented. The second, third, and fourth columns show 
	the result after the application of (\ref{first}) in 3D with $\delta = 0.8$  and $\vartheta = 0.2$,  $\delta = 0.7$  and $\vartheta = 0.3$, and $\delta = 0.6$  and $\vartheta = 0.4$, respectively.}
	\label{figure5}
\end{figure}
\noindent In \cite{4dpaper}, the first numerical scheme for $4$D image segmentation based on a generalized SUBSURF model was introduced. In this paper, we introduce and study a $4$D numerical scheme for the solution of the new $4$D segmentation model (\ref{first}) presented in the next section. This numerical method is based on the finite volume approach. The stability of the numerical scheme is also presented. Additionally, we introduced the local rescaling of the values of the segmentation function at each segmentation step to the interval $[0, 1]$. Furthermore, the application of the studied model to $3$D and $4$D image segmentation is studied. OpenMP and MPI parallelizations are employed for efficient computer implementation. For example, in the real application presented in the fifth and sixth experiments of Section \ref{chapterrealapp}, to process the 3D+time microscopy images, with dimensions $567\times577\times147\times70$, 1012 GB of memory was needed. This clearly shows that it may not be possible to process these images on a serial machine without parallel implementation utilizing the MPI. 
\subsection{Mathematical model}
\noindent Let $I^0 : \mathbb{O} \subset \mathbb{R}^4 \longrightarrow \mathbb{R}$, be the intensity function of a 4D image where $\mathbb{O} = \Omega \times [0,\theta_F]$, $\Omega \subset \mathbb{R}^3$, and $\theta_F$ represents the final real video time. For each real video time $\theta \in [0, \theta_F]$, let $C_\theta = \{c^\theta_m\}^{N_\theta}_{m = 1}$ denote the set of ``centers,'' where $c^\theta_m$ and $N_\theta$ represent the $m^{th}$ center and the total number of centers, respectively, at time $\theta$. Furthermore, for each $c^\theta_m \in C_\theta$ and $\theta \in [0, \theta_F]$, let $\alpha^\theta_m = \min \limits_{y~\in~B^\theta_m(c^\theta_m,r)} I^0(y,\theta)$, $\beta^\theta_m = \max \limits_{y~\in~B^\theta_m(c^\theta_m,r)} I^0(y,\theta)$, where $~B^\theta_m(c^\theta_m,r)$ is the $m^{th}$ ball at time $\theta$
 with radius $r$ centered at $c^\theta_m$, a given point (``center'') inside the object to be segmented. Then the threshold value (which is used for local thresholding) may be 
 chosen as ${TH}^\theta_m = \lambda ~ \alpha^\theta_m + (1 - \lambda) ~ \beta^\theta_m$, $\lambda \in [0, 1]$ and the ball radius may be chosen with respect to the approximate
 size of the object to be segmented. So, the idea (of local thresholding) is to set all intensity values in the local neighborhood of center $c^\theta_m$ to $\beta^\theta_m$ if they are above ${TH}^\theta_m$ and 
 $\alpha^\theta_m$ otherwise. Hence, the $4$D image intensity of the thresholded image within a ball of radius $r$ is defined by
 
 \begin{equation*}\label{thr}
 I^{TH}(y,\theta) =
	\begin{cases}
    \beta^\theta_m, \; I^0(y,\theta)  \geq {TH}^\theta_m,\\
	\alpha^\theta_m, \; otherwise,
	\end{cases}
\end{equation*}
where $y~\in~B^\theta_m(c^\theta_m,r)$.

\noindent Our new method is based on solution of the following generalized SUBSURF equation
\begin{eqnarray}\label{first}
   \frac{\partial u}{\partial t} = |\nabla u|\nabla \cdot \Big(G^0 \frac{\nabla u}{|\nabla u|} \Big)\text{ in } (0,T] \times \mathbb{O},
 \end{eqnarray}
 where $u(t,x)$ is the unknown 
 segmentation function, $x \in \mathbb{O}$, $t \in [0,T]$ is a ``segmentation time,'' $G^0 = g(\delta|\nabla G_{\sigma}\ast I^0| + \vartheta|\nabla G_{\sigma}\ast I^{{TH}}|)$, $g$ is the function introduced by Perona and Malik in \cite{pm} (see, e.g., equation (\ref{eg1})), $\delta, \vartheta \in [0, 1]$ determine the influence of information obtained from thresholded and original image intensities, 
 $G_{\sigma}$ is the smoothing kernel, e.g., the Gaussian function, $\triangledown G_\sigma *I^0$ and $\triangledown G_\sigma *I^{TH}$ are presmoothing of $I^0$ and $I^{TH}$ by convolution with $G_{\sigma}$, respectively. It is well known, see e.g., \cite{koend, witkin}, that the convolution with Gaussian function $G_{\sigma}$ is equivalent to solving the linear heat equation (LHE) obtained by setting $G^0 \equiv 1$ in equation (\ref{first}), with $t = \frac{1}{2}\sigma^2$. Thus, the presmoothing of $I^0$ and $I^{TH}$ by convolution with Gaussian function is realized by solving the LHE. Equation (\ref{first}) is accompanied by
Dirichlet boundary conditions
\begin{eqnarray}\label{first11}
 u(t,x) = u^D \text{ on } [0,T] \times \partial \mathbb{O},
\end{eqnarray}
and with the initial condition
\begin{eqnarray}\label{first33}
 u(0,x) = u^0(x)\text{ in } \mathbb{O}.
\end{eqnarray}
Without loss of generality, $u^D = 0$ is assumed. The initial condition in $4$D is constructed using equation (\ref{initialcond}), where $x \in \Omega$.

\subsection{Numerical discretization}\label{4dnumerics}
\noindent In this section, the details of time and space discretization is presented.
\subsubsection{Time discretization}
\noindent For time discretization of (\ref{first}), semi-implicit approach which guarantees
unconditional stability is used. Suppose that the (\ref{first})–(\ref{first3}) is solved in time interval $I = [0, T]$ and $N$ equal number of time steps. 
If $\tau = \frac{T}{N}$
denotes the time step, then the time discretization of (\ref{first}) is given by
\begin{eqnarray}\label{3rd}
  \frac{1}{\sqrt{\varepsilon^2 + |\nabla u^{n-1}|^2}} \frac{u^n - u^{n-1}}{\tau} = \nabla \cdot \bigg(G^0 \frac{\nabla u^n}{\sqrt{\varepsilon^2 + |\nabla u^{n-1}|^2}} \bigg),\\ \nonumber
 \end{eqnarray}
 where $\varepsilon$ is the regularization factor (Evans - Spruck \cite{evans}), 
 $u^0$ is given initial segmentation function, and $u^n$, $n = 1,\cdots, N$ is the solution of the model in segmentation time step $n$.
 \subsubsection{Space discretization}
 \noindent For space discretization, we start with introduction of some notations which will be used subsequently. We have
 adopted similar notations as those used in \cite{kmikula}. Let $\mathcal{T}_h$ denote
 finite volume mesh containing the doxels of $4$D image, while $V_{ijkl}$, $i = 1, \cdots, N_1$, $j = 1, \cdots, N_2$, $k = 1, \cdots, N_3$, $l = 1, \cdots, N_4$ denote
 each finite volume. For each $V_{ijkl} \in \mathcal{T}_h$, let $h_1$, $h_2$, $h_3$, $h_4$ be the size of the volumes in $x_1$, $x_2$, $x_3$, $x_4$ direction.
 Let the volume of $V_{ijkl}$ and its barycenter be denoted by $m(V_{ijkl})$ and $c_{ijkl}$, respectively. Let the approximate value of $u^n$ in 
 $c_{ijkl}$ be denoted by $u_{ijkl}^n$. For every $V_{ijkl} \in \mathcal{T}_h$, we have the following definitions:
 \begin{itemize}
     \item $N_{ijkl} = \{(p, q, r, s): p, q, r, s \in \{-1, 0, 1\},~ |p| + |q| + |r| + |s| = 1\}$ 
     \item $P_{ijkl} = \{(p, q, r, s): p, q, r, s \in \{-1, 0, 1\},~ |p| + |q| + |r| + |s| = 2\}$
 \end{itemize}
 For each $(p, q, r, s) \in N_{ijkl}$, denote the line connecting the center of $V_{ijkl}$ and the center of its neighbor 
 $V_{i+p,j+q,k+r,l+s}$ by $\sigma_{ijkl}^{pqrs}$ and  its length $m(\sigma_{ijkl}^{pqrs})$. We denote 
 the sides, its measure, and normal in coordinate directions of the finite volume $V_{ijkl}$ by $e_{ijkl}^{pqrs}$, $m(e_{ijkl}^{pqrs})$, and $\nu_{ijkl}^{pqrs}$, respectively. Furthermore, for each $(p, q, r, s) \in P_{ijkl}$, $y_{ijkl}^{pqrs}$ is defined by
 \begin{eqnarray*}
  y_{ijkl}^{pq00} &=& \frac{1}{4} \Big(c_{ijkl} + c_{i+p,j,k,l} + c_{i,j+q,k,l} + c_{i+p,j+q,k,l}\Big), \\
  y_{ijkl}^{0qr0} &=& \frac{1}{4} \Big(c_{ijkl} + c_{i,j+q,k,l} + c_{i,j,k+r,l} + c_{i,j+q,k+r,l}\Big), \\
  y_{ijkl}^{00rs} &=& \frac{1}{4} \Big(c_{ijkl} + c_{i,j,k+r,l} + c_{i,j,k,l+s} + c_{i,j,k+r,l+s}\Big), \\
  y_{ijkl}^{p0r0} &=& \frac{1}{4} \Big(c_{ijkl} + c_{i+p,j,k,l} + c_{i,j,k+r,l} + c_{i+p,j,k+r,l}\Big), \\
   y_{ijkl}^{p00s} &=& \frac{1}{4} \Big(c_{ijkl} + c_{i+p,j,k,l} + c_{i,j,k,l+s} + c_{i+p,j,k,l+s}\Big),\\
   y_{ijkl}^{0q0s} &=& \frac{1}{4} \Big(c_{ijkl} + c_{i,j+q,k,l} + c_{i,j,k,l+s} + c_{i,j+q,k,l+s}\Big).
\end{eqnarray*}
 The approximate value of $u^{n-1}$ in $y_{ijkl}^{pqrs}$,
 with $(p, q, r, s) \in P_{ijkl}$, is denoted by $u_{ijkl}^{pqrs}$; the time index is omitted because only the values from the time level $n - 1$ will be needed at these points.\\\\
 \noindent With these notations, integration of (\ref{3rd}) over finite volume $V_{ijkl}$ yields
 \begin{eqnarray}\label{4pt5}
  \int\limits_{V_{ijkl}} \frac{1}{\sqrt{\varepsilon^2 + |\nabla u^{n-1}|^2}} \frac{u^n - u^{n-1}}{\tau} dx= 
  \int\limits_{V_{ijkl}} \nabla \cdot \bigg(G^0 \frac{\nabla u^n}{\sqrt{\varepsilon^2 + |\nabla u^{n-1}|^2}} \bigg)dx.
 \end{eqnarray}
 Let the average value of $A_\varepsilon = \sqrt{\varepsilon^2 + |\nabla u^{n-1}|^2}$ in finite volume $V_{ijkl}$ be denoted by $\bar{A}_{\varepsilon,ijkl}^{n-1}$. If we consider the fact that
 $u^{n}$ and $u^{n-1}$ are asummed to be piecewise constant over the finite volume mesh on the left hand side of (\ref{4pt5}) and using the divergence theorem on the right hand side of (\ref{4pt5}), we obtain
 \begin{eqnarray}\label{4pt6}
  \frac{m(V_{ijkl})}{\bar{A}_{\varepsilon,ijkl}^{n-1}} \frac{u_{ijkl}^n - u_{ijkl}^{n-1}}{\tau} = 
   \sum\limits_{N_{ijkl}}~ 
  \int\limits_{e_{ijkl}^{pqrs}} G^0 \frac{\nabla u^n}{\sqrt{\varepsilon^2 + |\nabla u^{n-1}|^2}}\cdot \nu_{ijkl}^{pqrs} dS,
 \end{eqnarray}
 where beneath the summation sign, we used just $N_{ijkl}$ instead of $(p,q,r,s) \in N_{ijkl}$ to simplify notation. 
 If we approximate normal derivative $\nabla u^n \cdot \nu_{ijkl}^{pqrs}$ by $(u_{i+p,j+q,k+r,l+s}^n - u_{ijkl}^n)/ m(\sigma_{ijkl}^{pqrs})$
 and define $A_{\varepsilon,ijkl}^{pqrs;n-1}$ and $G_{ijkl}^{pqrs}$ to be the average of $A_\varepsilon$ and $G^0$ on $e_{ijkl}^{pqrs}$ then (\ref{4pt6}) reduces to
 \begin{eqnarray}\label{64th}
  m(V_{ijkl}) \frac{u_{ijkl}^n - u_{ijkl}^{n-1}}{\tau} = 
  \bar{A}_{\varepsilon,ijkl}^{n-1}\sum\limits_{N_{ijkl}}~ 
  m(e_{ijkl}^{pqrs}) G_{ijkl}^{pqrs} \frac{u_{i+p,j+q,k+r,l+s}^n - u_{ijkl}^n}{A_{\varepsilon,ijkl}^{pqrs;n-1}m(\sigma_{ijkl}^{pqrs})}.
 \end{eqnarray}
 Equation (\ref{64th}) can be rewritten as 
 \begin{eqnarray}\label{74th}
  u_{ijkl}^n =  u_{ijkl}^{n-1} +
  \frac{\tau}{m(V_{ijkl})} \bar{A}_{\varepsilon,ijkl}^{n-1}\sum\limits_{N_{ijkl}}~ 
  m(e_{ijkl}^{pqrs}) G_{ijkl}^{pqrs} \frac{u_{i+p,j+q,k+r,l+s}^n - u_{ijkl}^n}{A_{\varepsilon,ijkl}^{pqrs;n-1}m(\sigma_{ijkl}^{pqrs})},
 \end{eqnarray}
 which can be written in the form of system of equations
 \begin{eqnarray}\label{84th}
  \Bigg(1 + \frac{\tau}{m(V_{ijkl})} \bar{A}_{\varepsilon,ijkl}^{n-1}\sum\limits_{N_{ijkl}}~ 
  G_{ijkl}^{pqrs} \frac{m(e_{ijkl}^{pqrs}) }{A_{\varepsilon,ijkl}^{pqrs;n-1}m(\sigma_{ijkl}^{pqrs})}\Bigg) u_{ijkl}^n &-&\\ \nonumber
   \frac{\tau}{m(V_{ijkl})} \bar{A}_{\varepsilon,ijkl}^{n-1}\sum\limits_{N_{ijkl}}~ 
 G_{ijkl}^{pqrs} \frac{m(e_{ijkl}^{pqrs}) }{A_{\varepsilon,ijkl}^{pqrs;n-1}m(\sigma_{ijkl}^{pqrs})}u_{i+p,j+q,k+r,l+s}^n
  &=&u_{ijkl}^{n-1},\\ \nonumber
 \end{eqnarray}
 $i = 1, \cdots, N_1$, $j = 1, \cdots, N_2$, $k = 1, \cdots, N_3$, and $l = 1, \cdots, N_4$.
\noindent The  
   average values $G_{ijkl}^{pqrs}$, $A_{\varepsilon,ijkl}^{pqrs;n-1}$, and $\bar{A}_{\varepsilon,ijkl}^{n-1}$ either in doxels or on doxel sides
   are determined using the reduced diamond cell strategy (see \cite{kmikula}) adapted to $4$D.\\
   
\noindent In the sense of this reduced diamond cell approach, the approximate values of $u^{n-1}$
are obtained in the points $y_{ijkl}^{pqrs}$. These values are defined for each $(p, q, r, s) \in P_{ijkl}$ by
\begin{eqnarray*}
  u_{ijkl}^{pq00} &=& \frac{1}{4} \Big(u_{ijkl}^{n-1} + u_{i+p,j,k,l}^{n-1} + u_{i,j+q,k,l}^{n-1} + u_{i+p,j+q,k,l}^{n-1}\Big), \\
  u_{ijkl}^{0qr0} &=& \frac{1}{4} \Big(u_{ijkl}^{n-1} + u_{i,j+q,k,l}^{n-1} + u_{i,j,k+r,l}^{n-1} + u_{i,j+q,k+r,l}^{n-1}\Big), \\
  u_{ijkl}^{00rs} &=& \frac{1}{4} \Big(u_{ijkl}^{n-1} + u_{i,j,k+r,l}^{n-1} + u_{i,j,k,l+s}^{n-1} + u_{i,j,k+r,l+s}^{n-1}\Big), \\
  u_{ijkl}^{p0r0} &=& \frac{1}{4} \Big(u_{ijkl}^{n-1} + u_{i+p,j,k,l}^{n-1} + u_{i,j,k+r,l}^{n-1} + u_{i+p,j,k+r,l}^{n-1}\Big), \\
   u_{ijkl}^{p00s} &=& \frac{1}{4} \Big(u_{ijkl}^{n-1} + u_{i+p,j,k,l}^{n-1} + u_{i,j,k,l+s}^{n-1} + u_{i+p,j,k,l+s}^{n-1}\Big),\\
   u_{ijkl}^{0q0s} &=& \frac{1}{4} \Big(u_{ijkl}^{n-1} + u_{i,j+q,k,l}^{n-1} + u_{i,j,k,l+s}^{n-1} + u_{i,j+q,k,l+s}^{n-1}\Big).
\end{eqnarray*}
The components of the averaged gradient on $e_{ijkl}^{pqrs}$, $(p, q, r, s) \in N_{ijkl}$, are approximated by $2$D diamond
cell approach which use the values $u_{ijkl}^{pqrs}$ given above (see also \cite{kmikula}). Additionally, approximation of
the gradient on the face $e_{ijkl}^{pqrs}$ is denoted by $\nabla^{pqrs} u_{ijkl}^{n-1}$. This implies that

 \begin{eqnarray*}\label{9th}
  \nabla^{p000} u_{ijkl}^{n - 1} = \frac{1}{m(e_{ijkl}^{p000})}\int\limits_{e_{ijkl}^{p000}} \nabla u^{n-1} dx 
  ~~\approx \Bigg(p(u_{i+p,j,k,l}^{n-1} - u_{ijkl}^{n-1}) / h_1, (u_{ijkl}^{p,1,0,0} - u_{ijkl}^{p,-1,0,0}) / h_2,&&\\(u_{ijkl}^{p,0,1,0} - u_{ijkl}^{p,0,-1,0}) / h_3,(u_{ijkl}^{p,0,0,1} - u_{ijkl}^{p,0,0,-1}) / h_4\Bigg), 
 \end{eqnarray*}

 \begin{eqnarray*}\label{11th}
  \nabla^{0q00} u_{ijkl}^{n - 1} = \frac{1}{m(e_{ijkl}^{0q00})}\int\limits_{e_{ijkl}^{0q00}} \nabla u^{n-1} dx ~~\approx
  \Bigg((u_{ijkl}^{1,q,0,0} - u_{ijkl}^{-1,q,0,0}) / h_1, q(u_{i,j+q,k,l}^{n-1} - u_{ijkl}^{n-1}) / h_2,&&\\(u_{ijkl}^{0,q,1,0} - u_{ijkl}^{0,q,-1,0}) / h_3,(u_{ijkl}^{0,q,0,1} - u_{ijkl}^{0,q,0,-1}) / h_4\Bigg),
 \end{eqnarray*}

 \begin{eqnarray*}\label{12th}
  \nabla^{00r0} u_{ijkl}^{n - 1} = \frac{1}{m(e_{ijkl}^{00r0})}\int\limits_{e_{ijkl}^{00r0}} \nabla u^{n-1} dx ~~\approx
  \Bigg((u_{ijkl}^{1,0,r,0} - u_{ijkl}^{-1,0,r,0}) / h_1, (u_{ijkl}^{0,1,r,0} - u_{ijkl}^{0,-1,r,0}) / h_2,&&\\r(u_{i,j,k+r,l}^{n-1} - u_{ijkl}^{n-1}) / h_3,(u_{ijkl}^{0,0,r,1} - u_{ijkl}^{0,0,r,-1}) / h_4\Bigg),
 \end{eqnarray*}
\begin{eqnarray*}\label{13th}
  \nabla^{000s} u_{ijkl}^{n - 1} = \frac{1}{m(e_{ijkl}^{000s})}\int\limits_{e_{ijkl}^{000s}} \nabla u^{n-1} dx ~~\approx
  \Bigg((u_{ijkl}^{1,0,0,s} - u_{ijkl}^{-1,0,0,s}) / h_1, (u_{ijkl}^{0,1,0,s} - u_{ijkl}^{0,-1,0,s}) / h_2,&&\\(u_{ijkl}^{0,0,1,s} - u_{ijkl}^{0,0,-1,s}) / h_3,s(u_{i,j,k,l+s}^{n-1} - u_{ijk,l}^{n-1}) / h_4\Bigg).
 \end{eqnarray*}
If the same approach for computation of gradients of image intensities is used, then the following approximation of $G_{ijkl}^{pqrs}$ in (\ref{84th}) is obtained
 \begin{eqnarray}\label{10th}
  G_{ijkl}^{pqrs} = g\Big(\delta|\nabla^{pqrs} I_{\sigma;ijkl}^0| +
\vartheta|\nabla^{pqrs} I_{\sigma;ijkl}^{TH}|\Big),\\ \nonumber
\end{eqnarray}
 where $I_{\sigma}^0 = G_\sigma*I^0$, $I_{\sigma;ijkl}^0$ is the value of $I_\sigma^0$ in doxel $V_{ijkl}$, and $I_{\sigma}^{TH} = G_\sigma*I^{TH}$, $I_{\sigma;ijkl}^{TH}$ is the value of $I_\sigma^{TH}$ in doxel $V_{ijkl}$. Finally, incorporating $\varepsilon-$regularization, we obtain the following remaining terms in (\ref{84th})
 \begin{eqnarray}\label{111th}
A_{\varepsilon,ijkl}^{pqrs;n-1} = \sqrt{\varepsilon^2 + |\nabla^{pqrs} u_{ijkl}^{n-1}|^2},~ 
\bar{A}_{\varepsilon,ijkl}^{n-1} = \sqrt{\varepsilon^2 + \frac{1}{8}\sum\limits_{N_{ijkl}}|\nabla^{pqrs} u_{ijkl}^{n-1}|^2},
 \end{eqnarray}
 \noindent Equations (\ref{84th}) accompanied by the zero Dirichlet boundary condition, represent a linear system of equations which can 
  be solved efficiently, e.g., using the Successive Overrelaxation (SOR) method. 
 \begin{remark}[{\bf Local rescaling}]
  For each segmentation time step $n$, and $l = 1, \cdots, N_4$, let $C_l = \{c^l_m\}^{N_l}_{m = 1}$ denote the set of centers, where $c^l_m$ and $N_l$ represent the $m^{th}$ center and the total number of centers, respectively for each $l$. Let $~B^l_m(c^l_m,r)$ be a ball
 with radius $r$ and center $c^l_m$ ($c^l_m$ is a given point inside the object to be segmented), $\mu^{l}_m = \min \limits_{B^l_m(c^l_m,r)} u_{ijkl}^n$ and $\xi^{l}_m = \max \limits_{B^l_m(c^l_m,r)} u_{ijkl}^n$. 
  Then the locally rescaled version of $u_{ijkl}^n$ given by {\rm(\ref{84th})} within the ball $~B^l_m(c^l_m,r)$ is obtained by the 
  following relation
  \begin{eqnarray}\label{841}
   u_{ijkl}^{resc;n} = \frac{1}{\xi^{l}_m - \mu^{l}_m}(u_{ijkl}^{n} - \mu^{l}_m).
  \end{eqnarray} 
 \end{remark}
\noindent Consequently, we have that for each
  time step $n$, rescaled version $u_{ijkl}^{resc;n} \in [0,1]$ and it is used instead of $u_{ijkl}^{n-1}$ in (\ref{84th}) in the next segmentation time step. So in each segmentation step we solve the following system of equations representing the final formulation of our method:
 \begin{eqnarray}\label{new1}
  \Bigg(1 + \frac{\tau}{m(V_{ijkl})} \bar{A}_{\varepsilon,ijkl}^{n-1}\sum\limits_{N_{ijkl}}~ 
  G_{ijkl}^{pqrs} \frac{m(e_{ijkl}^{pqrs}) }{A_{\varepsilon,ijkl}^{pqrs;n-1}m(\sigma_{ijkl}^{pqrs})}\Bigg) u_{ijkl}^n &-&\\ \nonumber
   \frac{\tau}{m(V_{ijkl})} \bar{A}_{\varepsilon,ijkl}^{n-1}\sum\limits_{N_{ijkl}}~ 
 G_{ijkl}^{pqrs} \frac{m(e_{ijkl}^{pqrs}) }{A_{\varepsilon,ijkl}^{pqrs;n-1}m(\sigma_{ijkl}^{pqrs})}u_{i+p,j+q,k+r,l+s}^n
  &=&u_{ijkl}^{resc;n-1},\\ \nonumber
 \end{eqnarray}
 $i = 1, \cdots, N_1$, $j = 1, \cdots, N_2$, $k = 1, \cdots, N_3$, and $l = 1, \cdots, N_4$. The coefficients of (\ref{new1}) are computed using $u_{ijkl}^{resc;n-1}$ instead of $u_{ijkl}^{n-1}$.\\ 
 
 \noindent Finally, we note that if $h = h_1 = h_2 = h_3 = h_4$, then  $m(V_{ijkl}) = h^4$, $m(e_{ijkl}^{pqrs}) = h^3$, and $m(\sigma_{ijkl}^{pqrs}) = h$, and the equation (\ref{new1}) simplifies to
 \begin{eqnarray}\label{new}
  \Bigg(1 + \frac{\tau}{h^2} \bar{A}_{\varepsilon,ijkl}^{n-1}\sum\limits_{N_{ijkl}}~ 
  \frac{G_{ijkl}^{pqrs} }{A_{\varepsilon,ijkl}^{pqrs;n-1}}\Bigg) u_{ijkl}^n &-&\\ \nonumber
   \frac{\tau}{h^2} \bar{A}_{\varepsilon,ijkl}^{n-1}\sum\limits_{N_{ijkl}}~ 
  \frac{G_{ijkl}^{pqrs}}{A_{\varepsilon,ijkl}^{pqrs;n-1}}u_{i+p,j+q,k+r,l+s}^n
  &=&u_{ijkl}^{resc;n-1}.\\ \nonumber
 \end{eqnarray}
 Since in our implementation we used common $h$, in the sequel, we deal with the properties of the system (\ref{new}). All derived properties are simply adopted also to the system (\ref{new1}).\\
 
 \noindent The solution of the linear system given by equations (\ref{new}) is obtained by the successive overrelaxation (SOR) method as follows:
 \begin{eqnarray}\label{5.33}
 u_{ijkl}^{n(k+1)}= \omega\bar{u}_{ijkl}^{n(k+1)}+(1-\omega)u_{ijkl}^{n(k)}, 
\end{eqnarray}
where 
\begin{eqnarray}\label{gsee}
 \bar{u}_{ijkl}^{n(k+1)}=\nonumber \Bigg(u_{ijkl}^{resc;n-1}+\frac{\tau}{h^2} \bar{A}_{\varepsilon,ijkl}^{n-1}\sum\limits_{N_{ijkl}^1}~ 
  \frac{G_{ijkl}^{pqrs}}{A_{\varepsilon,ijkl}^{pqrs;n-1}}u_{i+p,j+q,k+r,l+s}^{n(k+1)}\\ \nonumber+ \frac{\tau}{h^2} \bar{A}_{\varepsilon,ijkl}^{n-1}\sum\limits_{N_{ijkl}^2}~ 
  \frac{G_{ijkl}^{pqrs}}{A_{\varepsilon,ijkl}^{pqrs;n-1}}u_{i+p,j+q,k+r,l+s}^{n(k)}\Bigg)
 &/&\\ \nonumber \Bigg(1+\frac{\tau}{h^2} \bar{A}_{\varepsilon,ijkl}^{n-1}\sum\limits_{N_{ijkl}}~ 
  \frac{G_{ijkl}^{pqrs}}{A_{\varepsilon,ijkl}^{pqrs;n-1}}\Bigg) 
\end{eqnarray}
denotes a Gauss-Seidel iterate and $\omega$ is the relaxation parameter; $N_{ijkl}^1$ is the set of neighbors whose new values are already known and $N_{ijkl}^2$ is the set of neighbors
whose new values are not yet known.\\

\noindent Furthermore, the RED-BLACK SOR method \cite{an, msarti, mittal} can be used to overcome the inherent serial difficulty of the classic SOR. The RED-BLACK SOR divides the domain into an alternating RED and BLACK elements so that for any $i, j, k, l$, if  $(i + j + k + l) ~\%~ 2 == 0$, the element is RED and if  $(i + j + k + l) ~\%~ 2 == 1$, the element is BLACK. Updating the BLACK element’s solution requires the knowledge of adjacent RED elements and vice versa. Consequently, the entire computation is divided into two phases: RED and BLACK elements’ update. All RED elements are updated simultaneously in the first phase, and the same algorithm can be applied to the calculation of BLACK elements in the second phase. Thus, the solution of the linear system given by equations (\ref{new}) can be obtained by the RED-BLACK SOR method as follows: 
\begin{itemize}
    \item First phase: If $(i + j + k + l) ~\%~ 2 == 0$ i.e., if i + j + k + l is even, then update all RED elements in equation (\ref{5.33}) simultaneously using
BLACK elements’ values from the previous iteration;
    \item Second phase: If $(i + j + k + l) ~\%~ 2 == 1$ i.e., if i + j + k + l is odd, then update all BLACK elements in equation (\ref{5.33}) simultaneously using
RED elements' values from first phase.
\end{itemize}
\subsection{Stability of the numerical scheme}
\noindent In this section, we present a short proof that the linear system given by equation {\rm(\ref{new})} has a unique solution and that numerical scheme employed is unconditionally stable. The method or technique of proof presented in \cite{hms, app2} has been adopted. 
\begin{df}
The semi-implicit finite volume scheme given by equation {\rm(\ref{new})} for solving equation {\rm(\ref{first})} is unconditionally stable if for each $\epsilon > 0$, $\tau > 0$ and $~n \in \{1,\cdots, N\}$, the following inequality (the discrete minimum–maximum principle) holds
\begin{eqnarray}\label{mmp}
 \min \limits_{V_{ijkl}~\in~\mathcal{T}_h} u_{ijkl}^{resc;n - 1}\leq \min \limits_{V_{ijkl}~\in~\mathcal{T}_h} u_{ijkl}^n 
 \leq \max \limits_{V_{ijkl}~\in~\mathcal{T}_h} u_{ijkl}^n \leq \max \limits_{V_{ijkl}~\in~\mathcal{T}_h} u_{ijkl}^{resc;n - 1}.
\end{eqnarray}
\end{df}
\begin{thm}
The linear scheme given by equation {\rm(\ref{new})} has a unique solution $u_{ijkl}^n$ and is unconditionally stable for each $\epsilon > 0$, $\tau > 0$ and $~n \in \{1,\cdots, N\}$.
\end{thm}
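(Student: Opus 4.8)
The plan is to read the linear system (\ref{new}) off as a matrix equation whose matrix is strictly diagonally dominant with positive diagonal and nonpositive off-diagonal entries, so that invertibility (existence and uniqueness) and the discrete minimum--maximum principle (\ref{mmp}) both follow by elementary means. First I would abbreviate
$$c_{ijkl}^{pqrs}=\frac{\tau}{h^2}\,\bar{A}_{\varepsilon,ijkl}^{n-1}\,\frac{G_{ijkl}^{pqrs}}{A_{\varepsilon,ijkl}^{pqrs;n-1}}$$
and record that each coefficient is strictly positive: $\tau>0$ and $h>0$ by hypothesis, the Evans--Spruck regularization in (\ref{111th}) forces $A_{\varepsilon,ijkl}^{pqrs;n-1}\ge\varepsilon>0$ and $\bar{A}_{\varepsilon,ijkl}^{n-1}\ge\varepsilon>0$, and $G_{ijkl}^{pqrs}=g(\cdots)>0$ since $g$ takes values in $\mathbb{R}^+$ and never vanishes. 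In this notation (\ref{new}) becomes
$$\Big(1+\sum_{N_{ijkl}}c_{ijkl}^{pqrs}\Big)u_{ijkl}^n-\sum_{N_{ijkl}}c_{ijkl}^{pqrs}\,u_{i+p,j+q,k+r,l+s}^n=u_{ijkl}^{resc;n-1},$$
a system with diagonal entries $1+\sum c_{ijkl}^{pqrs}>0$ and off-diagonal entries $-c_{ijkl}^{pqrs}<0$.

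For existence and uniqueness I would invoke strict diagonal dominance: in each row the modulus of the diagonal entry is $1+\sum c_{ijkl}^{pqrs}$, while the off-diagonal moduli sum to $\sum c_{ijkl}^{pqrs}$, and $1+\sum c>\sum c$. On rows adjacent to $\partial\mathbb{O}$ the known Dirichlet values are moved to the right-hand side, which only lowers the off-diagonal sum and so strengthens the dominance. A strictly diagonally dominant matrix is nonsingular, e.g.\ by the Gershgorin circle theorem, so the solution $u_{ijkl}^n$ exists and is unique for every $n\in\{1,\ldots,N\}$.

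For unconditional stability I would verify (\ref{mmp}) by the standard discrete extremum argument. Suppose $\max_{\mathcal{T}_h}u^n$ is attained at an interior index $(i_0,j_0,k_0,l_0)$; if it is attained on $\partial\mathbb{O}$ the estimate is immediate from the consistency of the Dirichlet data. Evaluating the scheme there and using $u^n_{i_0+p,j_0+q,k_0+r,l_0+s}\le\max_{\mathcal{T}_h}u^n$ for each neighbor yields
$$\Big(1+\sum c\Big)\max_{\mathcal{T}_h}u^n\le u_{i_0j_0k_0l_0}^{resc;n-1}+\Big(\sum c\Big)\max_{\mathcal{T}_h}u^n,$$
and cancelling the common term gives $\max_{\mathcal{T}_h}u^n\le u_{i_0j_0k_0l_0}^{resc;n-1}\le\max_{\mathcal{T}_h}u^{resc;n-1}$. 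Applying the same reasoning at the minimizing index with the inequalities reversed gives $\min_{\mathcal{T}_h}u^{resc;n-1}\le\min_{\mathcal{T}_h}u^n$, and the two chains together are exactly (\ref{mmp}).

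The one step that carries all the weight is the strict positivity of the coefficients $c_{ijkl}^{pqrs}$: both the diagonal-dominance argument and the sign of the neighbor estimate in the extremum step break down if any coefficient could vanish or change sign. Making explicit that the regularization $\varepsilon>0$ keeps $A_\varepsilon$ and $\bar{A}_\varepsilon$ bounded below and that the Perona--Malik function $g$ is everywhere positive is therefore the crux; once that is in place, the remainder is routine linear algebra together with a one-line maximum-principle estimate.
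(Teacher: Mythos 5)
Your proposal is correct and follows essentially the same route as the paper: strict diagonal dominance of the system matrix (the paper phrases this as a strictly diagonally dominant M-matrix, citing standard references where you invoke Gershgorin) for existence and uniqueness, followed by the identical discrete extremum argument at the maximizing and minimizing indices to obtain the min--max principle (\ref{mmp}). The only minor quibble is your claim that \emph{strict} positivity of the coefficients $c_{ijkl}^{pqrs}$ carries all the weight --- in fact nonnegativity suffices for both steps, since the constant $1$ on the diagonal already yields strict dominance and the extremum estimate only needs $c_{ijkl}^{pqrs}\ge 0$.
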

\begin{proof}
The equation (\ref{new}) together with Dirichlet boundary condition is a system of linear equations with square matrix whose off diagonal elements are given by 
\begin{eqnarray}
 -~\frac{\tau}{h^2} \bar{A}_{\varepsilon,ijkl}^{n-1}~ 
  \frac{G_{ijkl}^{pqrs}}{A_{\varepsilon,ijkl}^{pqrs;n-1}}~,~ (p, q, r, s) \in N_{ijkl}~.
\end{eqnarray}
Also, the diagonal elements given by 
\begin{eqnarray}
 1 + \frac{\tau}{h^2} \bar{A}_{\varepsilon,ijkl}^{n-1}\sum\limits_{N_{ijkl}}~ 
  \frac{G_{ijkl}^{pqrs}}{A_{\varepsilon,ijkl}^{pqrs;n-1}}
\end{eqnarray}
are non-negative and dominate the sum of absolute value of the nondiagonal elements in each row. Hence, the matrix of the linear system (\ref{new}) is a strictly diagonally dominant M-matrix. Consequently, the existence of a unique solution of (\ref{new}) is guaranteed \cite{sddm1, sddm2}. Next, we show that the scheme is unconditionally stable. For this, it is enough to show that equation (\ref{new}) satisfies (\ref{mmp}). Clearly, equation (\ref{new}) is same as 
\begin{eqnarray}\label{741th}
  u_{ijkl}^n +
  \frac{\tau}{h^2} \bar{A}_{\varepsilon,ijkl}^{n-1}\sum\limits_{N_{ijkl}}~ 
  G_{ijkl}^{pqrs} \frac{u_{ijkl}^n - u_{i+p,j+q,k+r,l+s}^n}{A_{\varepsilon,ijkl}^{pqrs;n-1}}=  u_{ijkl}^{resc;n-1}.
 \end{eqnarray}
 Let
\begin{eqnarray*}
 \max \limits_{V_{abcd}~\in~\mathcal{T}_h} u_{abcd}^n =u_{ijkl}^n.
\end{eqnarray*}
Then from (\ref{741th}) we have that
\begin{eqnarray*}
 \frac{\tau}{h^2} \bar{A}_{\varepsilon,ijkl}^{n-1}\sum\limits_{N_{ijkl}}~ 
  G_{ijkl}^{pqrs} \frac{u_{ijkl}^n - u_{i+p,j+q,k+r,l+s}^n}{A_{\varepsilon,ijkl}^{pqrs;n-1}}\geq0.
\end{eqnarray*}
Hence,
\begin{eqnarray}\label{5.25}
 \max \limits_{V_{abcd}~\in~\mathcal{T}_h} u_{abcd}^n \leq u_{ijkl}^{resc;n-1} \leq \max \limits_{V_{abcd}~\in~\mathcal{T}_h} u_{abcd}^{resc;n-1}.
\end{eqnarray}
Using similar arguments, we have that
\begin{eqnarray}\label{5.26}
 \min \limits_{V_{abcd}~\in~\mathcal{T}_h} u_{abcd}^{resc;n-1} \geq u_{ijkl}^{resc;n-1} \geq \min \limits_{V_{abcd}~\in~\mathcal{T}_h} u_{abcd}^{n}.
\end{eqnarray}
Equations (\ref{5.25}) and (\ref{5.26}) implies that
\begin{eqnarray}\label{5.27}
 \min \limits_{V_{ijkl}~\in~\mathcal{T}_h} u_{ijkl}^{resc;n-1} \leq \min \limits_{V_{ijkl}~\in~\mathcal{T}_h} u_{ijkl}^{n} 
 \leq \max \limits_{V_{ijkl}~\in~\mathcal{T}_h} u_{ijkl}^{n} \leq \max \limits_{V_{ijkl}~\in~\mathcal{T}_h} u_{ijkl}^{resc;n-1},
\end{eqnarray}
which yield equation (\ref{mmp}). Hence, the numerical scheme given by equation
(\ref{new}) is unconditionally stable.
\end{proof}
\subsection{Experimental order of convergence}\label{eocsection}
\noindent Assuming that the error of a scheme in some given norm is proportional to some power, $\alpha$, of the grid size $h$. Then we have that $Err(h) = Ch^\alpha$ where $C$ is a contant of proportionality.
If half of the grid size is considered, i.e., $h  := \frac{h}{2}$ then\\ $Err(\frac{h}{2}) = C(\frac{h}{2})^\alpha$. 
Consequently, 
\begin{eqnarray}
\alpha = \log_ 2 \Big(\frac{Err(h)}{Err(\frac{h}{2})}\Big).
\end{eqnarray}
The $\alpha$ is called the \emph{experimental order of convergence} $(EOC)$ and is determined
by comparing numerical solutions and exact solutions on subsequently refined grids \cite{cmssga}.

\noindent We now test our method using the exact solution (see also \cite{cmssga})
\begin{eqnarray}\label{exactsol}
u(x_1, x_2, x_3, x_4, t) = \frac{x_1^2 + x_2^2 + x_3^2 + x_4^2 - 1}{6} + t.
\end{eqnarray}
of the level set equation
\begin{eqnarray}\label{levelsetmcf}
u_t = |\nabla u|\nabla \cdot \frac{\nabla u}{|\nabla u|}
\end{eqnarray}
and consider Dirichlet boundary conditions given by this exact solution.\\
This problem is solved in the spatial domain $\Omega = [−1.25,~ 1.25]^4$ and in the time interval $T = 0.0625$. We
have taken subsequent grid refinement $N_1 = N_2 = N_3 = N_4 =
10,~ 20,~ 40,~ 80,~ 160$ and considered number of discrete time steps $1,~ 4,~ 16,~ 64,~ 256$ respectively. The time step $\tau$ is chosen proportionally
to $h^2$ and we measure errors in $L_2((0, T), L_2(\Omega))-$norm. Table \ref{gridrefinementerr} shows errors in $L_2((0, T), L_2(\Omega))-$norm for refined grids and $\epsilon = h^2$. 

\begin{table}[H]
\centering
\caption{Errors in $L_2((0, T), L_2(\Omega))-$norm, and EOC comparing numerical and exact solution (\ref{exactsol}).} 
\label{gridrefinementerr}
\begin{tabular}{|c |c |c |c |c| c| c|}
\hline
$n$ & $h$ & final step & Error ($\epsilon = h^2$) & EOC\\ [0.5ex] 
\hline 
10 & $0.25$ & 1 & $1.680403e-2$&\\ 
20 & $0.125$ & 4 &$4.653133e-3$ & 1.852533\\
40 & $0.0625$ & 16 &  $1.208771e-3$& 1.944661\\
80 & $0.03125$ & 64 & $3.029600e-4$ & 1.996342\\
160 & $0.015625$ & 256  & $8.340820e-5$ & 1.860866\\
 [1ex]
\hline 
\end{tabular}

\end{table}
\noindent We observe that $\alpha \approx 2$ as the grids get more refined, implying that the method converges with an order of almost two. Thus, we can conclude that the scheme is reliable and can be used in practical applications.
\subsection{Brief overview of computer implementation}
\noindent The following steps provide an overview of the implementation steps for the new $4$D model. 
\begin{itemize}
    \item Read input 3D+time image together with the corresponding centers of cells.
    \item Using the input centers, generate initial segmentation function (or initial condition).
    \item Locally rescale the initial segmentation function to interval $[0,1]$.
    \item Using the input centers, perform local thresholding of $4$D image.
    \begin{enumerate}
        \item Compute the  
   coefficients $G_{ijkl}^{pqrs}$, $A_{\varepsilon,ijkl}^{pqrs;n-1}$, and $\bar{A}_{\varepsilon,ijkl}^{n-1}$ either in doxels or on doxel sides using the locally rescaled segmentation function.
    \item Solve the linear system given by (\ref{new}).
    \item Locally rescale the computed segmentation function to interval $[0,1]$.
    \end{enumerate}
    \item Repeat steps $1$, $2$, and $3$ until the total number of segmentation steps is reached.
    \item Output the result of segmentation
\end{itemize}

\noindent For complete serial and parallel implementation in C programming language, see 
\url{https://github.com/88MARK08/4D-image-segmentation-algorithm}.

\subsection{Parallel implementation using OpenMP}
\noindent OpenMP is a multi-threading implementation. In C/C++, omp.h header file includes all OpenMP functions. In our implementation of the new $4$D model, \emph{\#pragma omp parallel private\{$\cdots$\}} is used to instruct the OpenMP system
to divide tasks among the working threads. In the for loops,
the first loop is the loop for the real-time length $\theta$. In other words, we split a series of $3$D volumes among working threads. Furthermore, \emph{\#pragma omp parallel for private\{$\cdots$\} reduction(operator:variable)} is used to accomplish reduction operations. For instance, \emph{reduction (operator: variable)} is used to specify that the operation given by the ``\emph{operator}'' should be performed on the values of the ``\emph{variable}'' from all threads at the end of the parallel construct. Finally, to measure CPU time in parallel implementation, \emph{omp\_get\_wtime()} function is used.
\vskip 0.15cm
\noindent In table \ref{speedupopenmp}, we present a comparison of CPU times with OpenMP parallel implementation and serial implementation. In this experiment, a PC with 8192 MB RAM and processor: Intel(R) Core(TM) i7-7700HQ CPU @ 2.80 GHz (8 CPUs), $\sim2.8$ GHz was used.

\begin{table}[H]
\centering
\caption{Computing times and speed-up of OpenMP parallel
program for computing the EOC in Section \ref{eocsection}, with $n = 40$.} 
\label{speedupopenmp}
\begin{tabular}{|c |c c c c|}
\hline
\# threads & 1 & 2 & 4 & 8 \\ [0.5ex] 
\hline 
time (secs) & $225.8700$ & $119.0000$ & $71.9270$ & $42.3050$ \\ 
speed-up & $0$ & $1.898$ & $3.140$ & $5.339$\\ 
[1ex] 
\hline 
\end{tabular}

\end{table}
  
\subsection{Parallel implementation using MPI}
\noindent The two main goals of parallel program implementation are handling huge amounts of data that cannot be placed in the memory of a single serial computer and the shortest possible program execution time.
Assuming that in terms of execution time, a fraction $P$ of a program can be parallelized. 
In an ideal case, while executing a parallel program on $n_p$ processors, the execution time will be $1 − P + \frac{P}{n_p}$. Furthermore, the theoretical speed-up according to the Amdahl's law \cite{amdahl} is given by
$\frac{1}{(1 − P) ~+~ \frac{P}{n_p}}$.
Consequently, if only 90\% of the program can be parallelized, for example, then with infinitely many processors
the maximal speed-up (estimated from Amdahl's law by $\frac{1}{1 - P}$) cannot exceed $10$. To minimize the time spent 
on communication, it is necessary to require that the data transmitted (e.g., multidimensional arrays) 
be contiguous in memory. This is to ensure that data are exchanged directly among processes in one message using 
only one call of MPI send and receive subroutines. In this section, discrete $4$D image is represented by a four-dimensional array indexed by $i, j, k, l$, with $i = 1$, $\cdots$, $N_1$, $j = 1$, $\cdots$, $N_2$, $k = 1$, $\cdots$, $N_3$, $l = 1$, $\cdots$, $N_4$. The discrete computational domain including the boundary conditions is given by  $i = 0$, $\cdots$, $N_1 + 1$, $j = 0$, $\cdots$, $N_2 + 1$, $k = 0$, $\cdots$, $N_3 + 1$, $l = 0$, $\cdots$, $N_4 + 1$.
The boundary positions, in the computational domain, with $i = 0$, $i = N_1+ 1$, $j = 0$, $j = N_2 + 1$, $k = 0$, $k = N_3 + 1$, $l = 0$, $l = N_4 + 1$ are reserved for Dirichlet boundary conditions and all the inner doxel positions correspond to the original 4D image. Let $n_p$ be the number of processes, then to distribute the 4D data, define $n_4 = \lceil \frac{N_4}{n_p}\rceil$, $n_4 ^{last} = N_4 −(n_p − 1)n_4$, $n_3 = N_3$, $n_2 = N_2$, $n_1 = N_1$. Hence, the processes with rank from $0$ to $n_p − 2$ deal with part of the discrete 4D image which is the series of 3D volumes given by the array with indices $i = 0$, $\cdots$, $n_1 + 1$, $j = 0$, $\cdots$, $n_2 + 1$,
$k = 0$, $\cdots$, $n_3 + 1$, indexed locally by $l$ in the range $l = 0$, $\cdots$, $n_4 + 1$. Additionally, on the last process with rank $n_p − 1$, the index $l$ of the last 3D volume is $n_4^{last} + 1$ instead of $n_4 + 1$. The merging of all 3D volumes for $l = 1, \cdots, n_4$ ($n_4^{last}$ on the last process)
from all processes gives the non-distributed complete 4D image. For the purpose of iterative solution of the linear system and computation of its coefficients, the data overlap is needed. The overlap with the necessity of 
information exchange between neighbouring processes is given by the slices $n_4$, $n_4 + 1$ and slices $0,~ 1$ of the subsequent processes. It is good to mention at this point that in our computer implementation, $iMax =  n_1 + 2$, $jMax =  n_2 + 2$, $kMax =  n_3 + 2$, $lMax =  n_4 + 2$, $N = proc\_lMax = n_4$. Furthermore, the following relationship is used (see Listing \ref{4d21d}) to transform $4$D to $1$D array:
\begin{eqnarray*}
 T_{1D}(i,j,k,l) = l * iMax * jMax * kMax + k * iMax * jMax + j * iMax + i.
\end{eqnarray*}
\newpage
\begin{lstlisting}[language=C++, label={4d21d}, caption={Transformation of 4D array to 1D array}] 
int ijkl(int i, int j, int k, int l)
{
	return l * iMax * jMax * kMax + k * iMax * jMax + j * iMax + i;
	// or equivalently return ((l * kMax + k) * jMax + j) * iMax + i
}

\end{lstlisting}

\noindent In each segmentation time step, the solution values are updated as the linear system is solved iteratively. In each
iteration, four of these neighbors should be known already. Consequently, each consecutive process must wait until its preceding process is completed to get the unknown neighbors' values updated in a parallel run. The RED-BLACK SOR method (see, e.g., \cite{an, msarti, mittal}) is employed to do away with this dependency.
In the RED-BLACK SOR, all doxels in the computational domain are split into RED elements, given
by the condition that the sum of its indices is an even number, and BLACK elements, given by the condition that the sum of its indices is an odd number. 
So, the eight neighbors  of RED elements are BLACK elements and the
value of RED elements depends only on those of the BLACK elements, and vice
versa \cite{msarti}. As a result of this method, one SOR iteration is split into two steps. In the first step,
RED elements are updated and BLACK elements are updated in the second step. This splitting is perfectly
parallelizable \cite{msarti}.\\

\noindent After computation of one RED-BLACK SOR iteration for the RED elements on
every parallel process, RED updated values have to be exchanged in the overlapping
regions, and then one iteration for BLACK elements can be computed. The data
exchange is implemented using non-blocking MPI\_Isend
and MPI\_Irecv subroutines. 
For the residual's computation, partial information
from all the processes are collected and send to all processes to check the
stopping criterion by every process.\\

\noindent We note that all parallel computations were performed on a Linux cluster comprising six computational servers (nodes) and 192 processors. Each computational node has $252$ GB of memory and 32 processors; thus, the cluster has $1512$ GB of memory available for computations. Additionally, we used processors belonging to one of the computational nodes for the experiment involving computing the EOC in Section \ref{eocsection},  whose results are shown in Table \ref{table:speedup22}. Furthermore, we used the six servers to ensure sufficient memory resources for the fifth and sixth experiments in Section \ref{chapterrealapp}, which involves solving a linear system with $3~ 366 ~466 ~110$ unknowns.\\

\noindent Table \ref{table:speedup22} shows a comparison of CPU times with MPI parallel implementation and serial implementation. In this experiment, processors in one of the six servers in the Linux cluster were utilized.
\vskip 0.2truecm
\begin{table}[H]
\centering
\caption{Computing times and speed-up of MPI parallel
program running on 1 to 16 processors for computing the EOC in Section \ref{eocsection}, with $n = 80$.} 
\label{table:speedup22}
\begin{tabular}{|c |c c c c c|}
\hline
\# processors & 1 & 2 & 4 & 8 & 16\\ [0.5ex] 
\hline 
time (secs) & $41681.08$ & $20976.89$ & $10675.87$ & $5487.309$ & $3077.166$\\ 
speed-up & $0$ & $1.987$ & $3.904$ & $7.596$ & 13.545\\ 
[1ex] 
\hline 
\end{tabular}

\end{table}

\noindent Table \ref{table:speedup22} shows a linear speed-up with $2$, $4$, and $8$ processors. However, with 16 processors, the speed-up dropped from approximately 16 (expected) to 13.545. This drop may be attributed to the amount of time spent on communication between processes. For instance, one, three, seven, and 15 2-way  communications are involved with two, four, eight, and 16 processors, respectively. Thus, we may conclude that the communication time impacts the speed-up of the MPI parallel program.\\
\section{Application to 4D image segmentation}\label{chapterrealapp}
\noindent In this section, the new $4$D method is tested on artificially generated $3$D+time videos and applied to real data representing 3D+time microscopy images of cell nuclei within the zebrafish pectoral fin and hind-brain.\\

\noindent In the first experiment (see \url{https://doi.org/10.5281/zenodo.5513089} for the videos), $3$D+time video of one sphere was artificially generated. The goal of this simple experiment is to show that the 4D method (\ref{first}) can approximate a missing shape in a 3D+time video. This is because SUBSURF models can complete a missing part of an object.  In the 4D case, the method is expected to complete a missing volume. Thus, from a $3$D+time video of 20 frames, we removed time frames 5, 10, and 15 and tried reconstructing the entire video using our 4D model. In Figure \ref{m_artificial_image_at_t1-20}, the first column shows the $3$D frame of the video at the time step $10$, the second and third columns show their corresponding reconstruction using the 4D method; the third column shows the result when the sphere at volume/frame 10 is removed. We note that in the second column, all frames were considered in the segmentation. The results of the segmentation shown in columns two and three are colored in blue. Furthermore, the second column of Figure \ref{m_artificial_image_at_t1-20} shows that the segmentation results of this moving sphere are good. However, the third column of this figure shows that the segmentation result of the sphere at the missing volume/frame is an ellipsoidal shape instead of a spherical shape. The approximation of a sphere with an ellipsoid is a good result that can be very useful during cell tracking. Thus, we can conclude from this experiment that our 4D method can approximate a missing shape in 3D+time.\\

\begin{figure}[h]
\centering
	\begin{minipage}{0.24\textwidth}
		\centering
		\includegraphics[width=\textwidth]{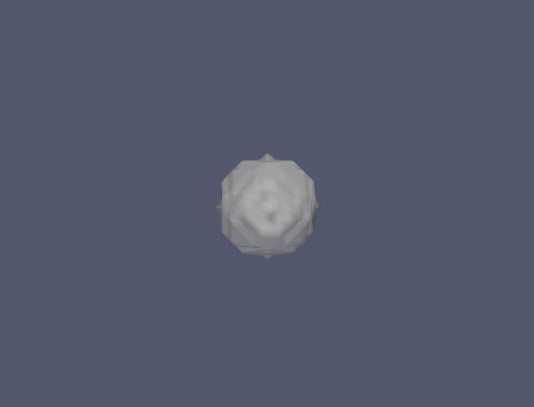}
	\end{minipage}
	\vspace{0.035cm}
	\begin{minipage}{0.24\textwidth}
		\centering
		\includegraphics[width=\textwidth]{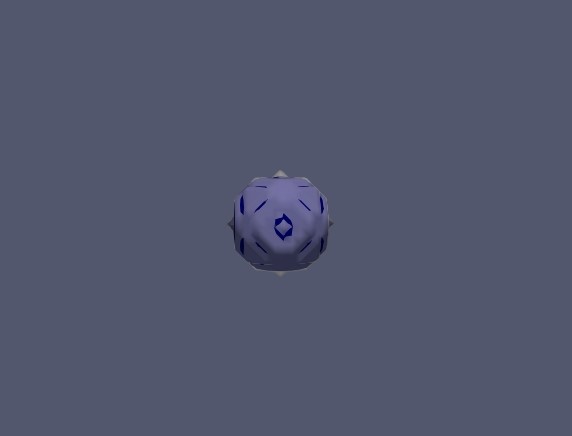}
	\end{minipage}
	\vspace{0.035cm}
	\begin{minipage}{0.24\textwidth}
		\centering
		\includegraphics[width=\textwidth]{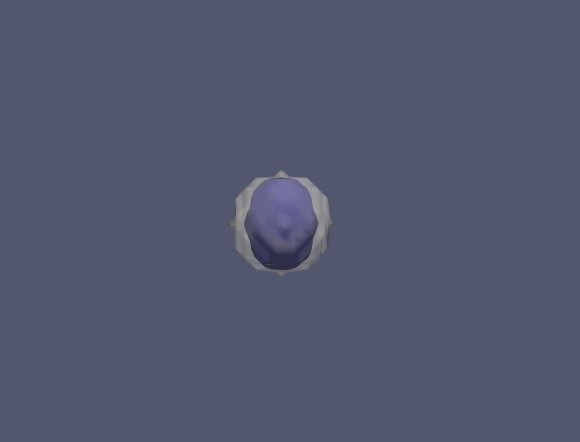}
	\end{minipage}
	\vspace{0.035cm}

	\caption{First column of this figure shows the $3$D frame of 4D image of a sphere at time step $10$, the second column shows the corresponding reconstruction using (\ref{first}), and the third column shows the result when the sphere at volume/frame 10 is removed.}
	\label{m_artificial_image_at_t1-20}
\end{figure}

\noindent In the second experiment (see \url{https://doi.org/10.5281/zenodo.5513089} for the videos), $3$D+time videos of spheres were artificially generated. There are four spheres in time steps $1-3$, seven spheres in time steps $4-17$, and five spheres in time steps $18-20$. In Figure \ref{artificial_image_at_t1-20}, the first row shows the $3$D frames of the video at the time steps $1$ (first column), $10$ (second column), and $20$ (third column), and the second row shows their corresponding reconstruction using the new method. The results of the segmentation are colored in blue. Furthermore, the segmentation results of the $3$D+time videos of these spheres show a very good performance of the 4D method (\ref{first}) on this artificial dataset.

\begin{figure}[h]
	\centering
	\begin{minipage}{0.24\textwidth}
		\centering
		\includegraphics[width=\textwidth]{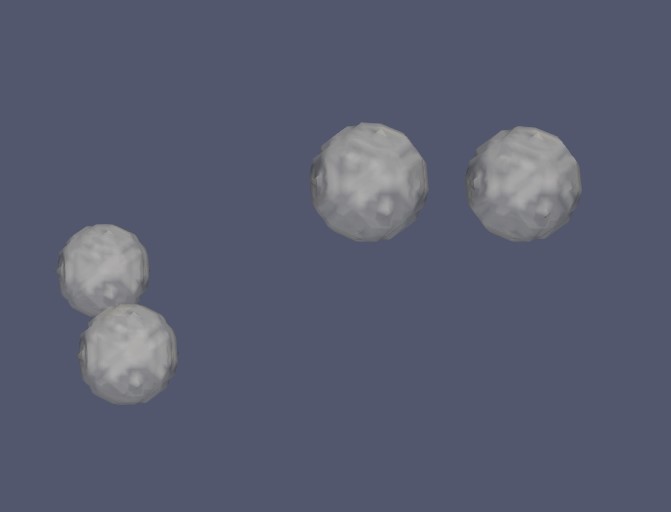}
	\end{minipage}
	\vspace{0.045cm}
	\begin{minipage}{0.24\textwidth}
		\centering
		\includegraphics[width=\textwidth]{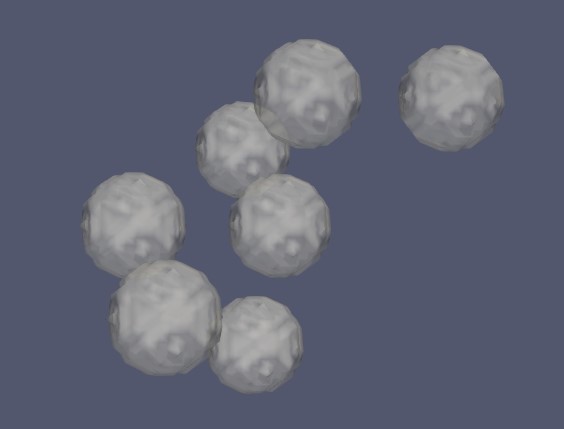}
	\end{minipage}
	\vspace{0.045cm}
	\begin{minipage}{0.24\textwidth}
		\centering
		\includegraphics[width=\textwidth]{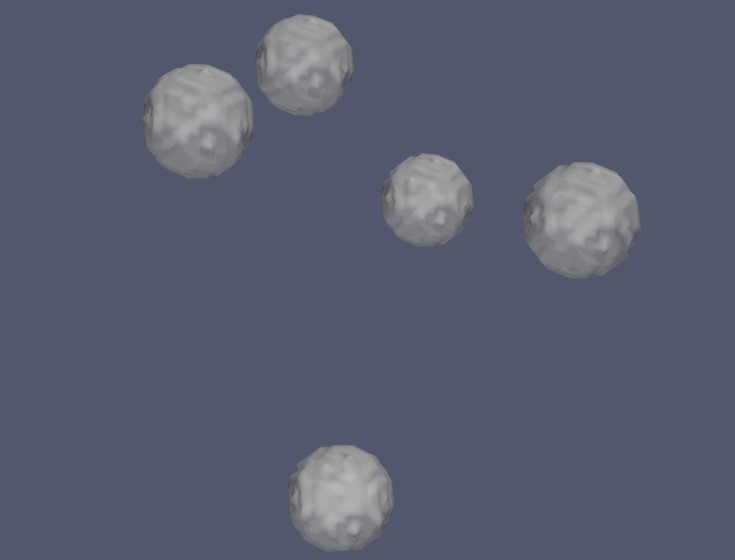}
	\end{minipage}
	\vspace{0.045cm}
	
	\begin{minipage}{0.24\textwidth}
		\centering
		\includegraphics[width=\textwidth]{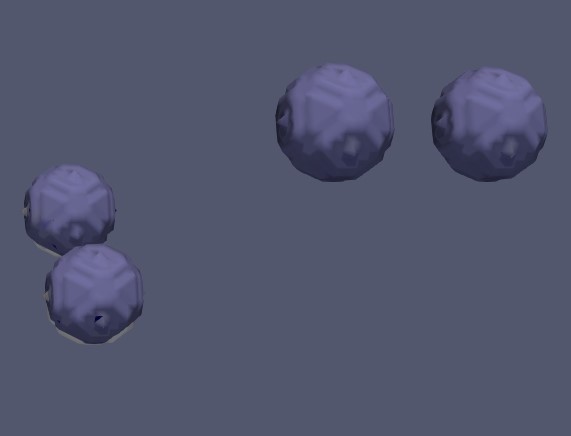}
	\end{minipage}
	\vspace{0.045cm}
	\begin{minipage}{0.24\textwidth}
		\centering
		\includegraphics[width=\textwidth]{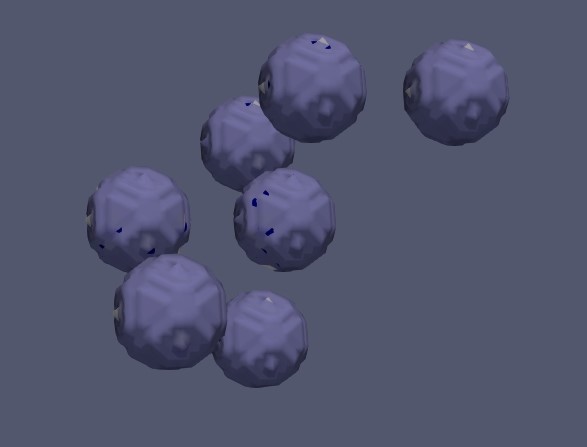}
	\end{minipage}
	\vspace{0.045cm}
	\begin{minipage}{0.24\textwidth}
		\centering
		\includegraphics[width=\textwidth]{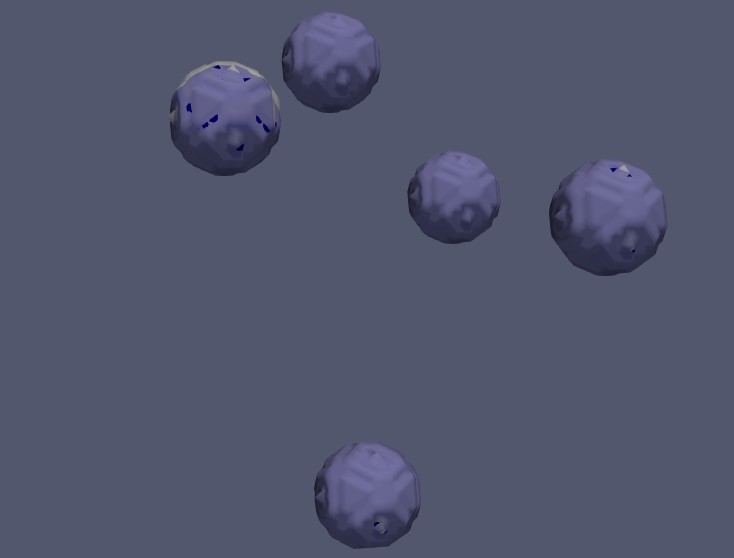}
	\end{minipage}
	\vspace{0.045cm}
	
	\caption{First row of this figure shows the $3$D frames of 4D image of spheres at time steps $1$ (first column), $10$ (second column), and $20$ (third column) and the second row shows their corresponding reconstruction using (\ref{first}).}
	\label{artificial_image_at_t1-20}
\end{figure}

\noindent In the following two experiments, we present the results of experiments with data from zebrafish hindbrain. 3D microscopy images of cell nuclei in the hindbrain of developing
zebrafish embryos were provided by Mageshi Kamaraj from the group of Nadine Peyri\'{e}ras (CNRS BioEmergences, France).\\

\noindent In the subsequent two experiments, seven arbitrary cell nuclei centers in zebrafish hindbrain were selected, and 23 time frames were considered. For each time frame, we considered a small $30 \times 30 \times 30$ computational domain around each of these seven nuclei centers. The motivation for this construction is to reduce the memory requirement of our serial implementation and ensure easy visualization. Furthermore, image intensities from the original dataset are copied to these small computational domains around nuclei centers in each time frame. Hence, when the 3D volumes containing these seven small cubes are put together, we obtain the 3D+time image shown in Figure \ref{7_4d_inputimage}.\\   

\noindent In the third experiment, using the 3D method case presented in \cite{ubaetal}, we performed 3D segmentation of these seven cell nuclei within the small $30 \times 30 \times 30$ computational domain in each 3D volume containing these seven small cubes. The 3D segmentation results of these cell nuclei in each 3D volume are put together over time and used as an input $3$D+time image to the 4D method (first column of Figure \ref{7_4d_inputimage}).
This is another artificial experiment that is more closer to experiments with real data. Hence, in generating the 3D+time image for this experiment, we consider shapes closer to real cell shapes than spheres. We note that in the first, second, and third experiments, the following pairs of parameters $(\delta = 1, \vartheta = 0)$, $(\delta = 0, \vartheta = 1)$, and $(\delta = 0.5, \vartheta = 0.5)$ in model (\ref{first}) yield the same result. This is because the thresholded and original image intensities are the same.\\

\noindent First image of Figure \ref{7_4d_inputimage} shows the visualization of seven zebrafish cell nuclei, segmented in $3$D and put together to form a $3$D+time image. Second image of Figure \ref{7_4d_inputimage} shows the corresponding segmentation result using the $4$D model (\ref{first}). The segmentation results are colored in blue and show accurate correspondence of the segmented moving cells in 4D with the original 4D image.

\begin{figure}[h]
\centering
	\begin{minipage}{0.44\textwidth}
		\centering
		\includegraphics[width=\textwidth]{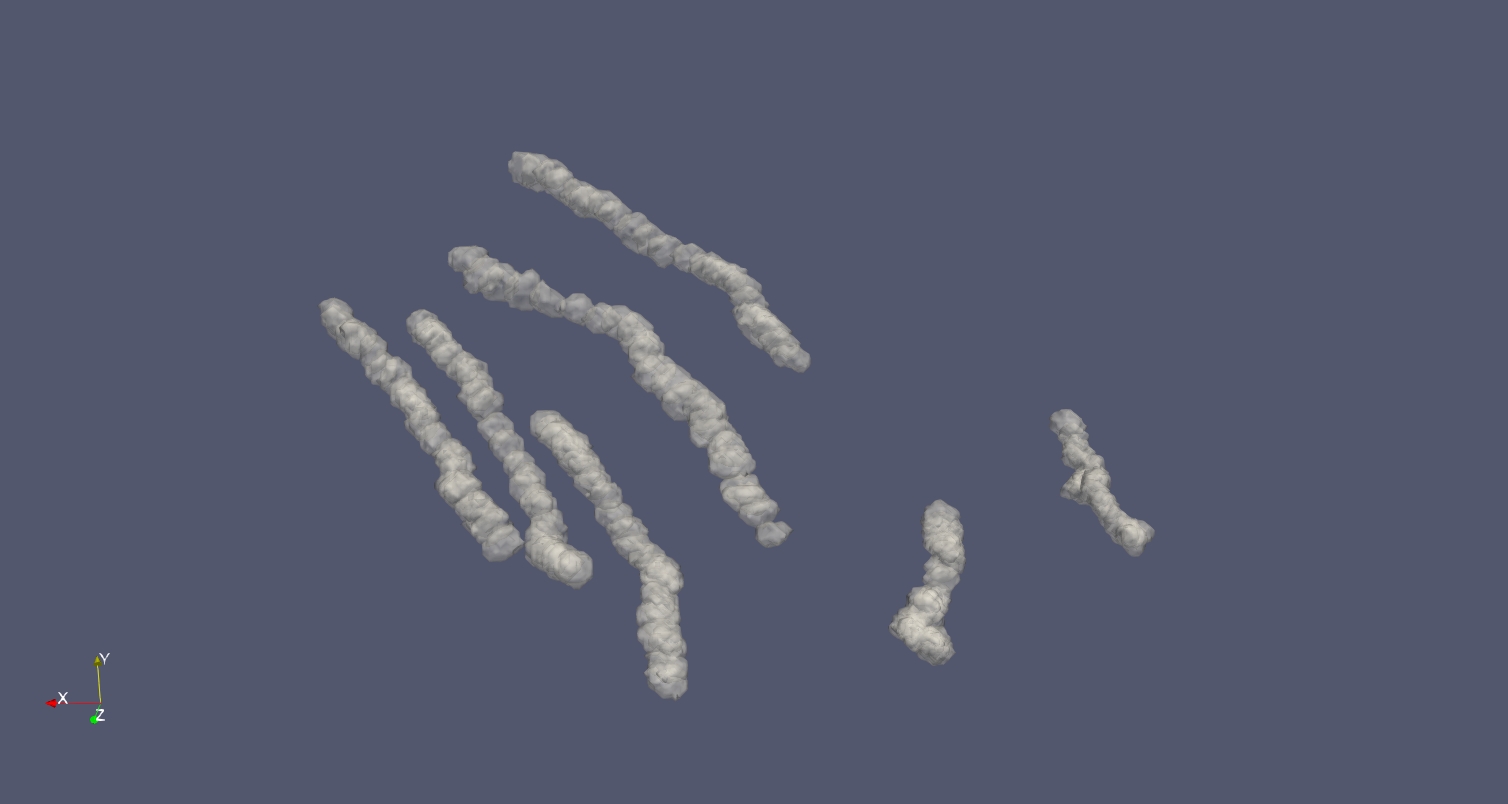}
	\end{minipage}
	\vspace{0.045cm}
	\begin{minipage}{0.44\textwidth}
		\centering
		\includegraphics[width=\textwidth]{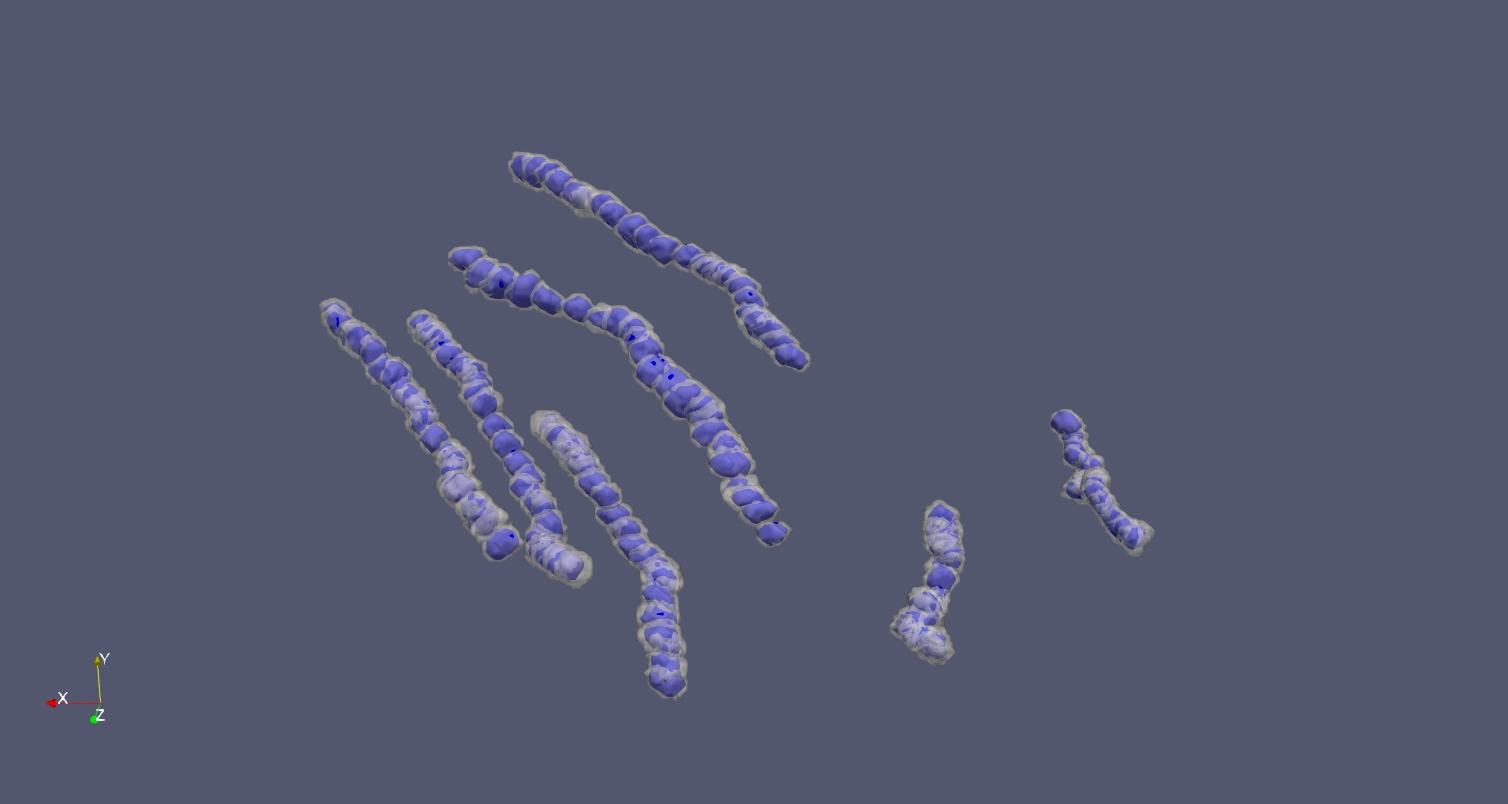}
	\end{minipage}
	\vspace{0.045cm}
	\caption{This figure shows the $3$D image of seven zebrafish cell nuclei moving in time (first image). Second image shows the segementation result using model \ref{first}.}
	\label{7_4d_inputimage}
\end{figure}

\noindent In the fourth experiment, we worked with original image intensity within the small $30 \times 30 \times 30$ computational domain. For each of the seven selected nuclei centers, the intensities around each nucleus are copied to the $30 \times 30 \times 30$ computational domain. Hence, there are seven small computational domains in each time frame or 3D volume, and each domain contains the original image intensity. The $3$D+time image processed is obtained by putting these original $3$D images (containing seven small 3D volumes) together. Additionally, we restricted computations to a small 3D volume around the nuclei. The images presented in Figure \ref{7_4d_orig_inputimage} show a visualization of the 3D+time images.
\noindent First image of Figure \ref{7_4d_orig_inputimage} shows the $3$D image of seven zebrafish cell nuclei that are moving in time and second image shows the corresponding segmentation result using the $4$D model (\ref{first}). The results of the segmentation are colored in black.
\begin{figure}[h]
\centering
	\begin{minipage}{0.44\textwidth}
		\centering
		\includegraphics[width=\textwidth]{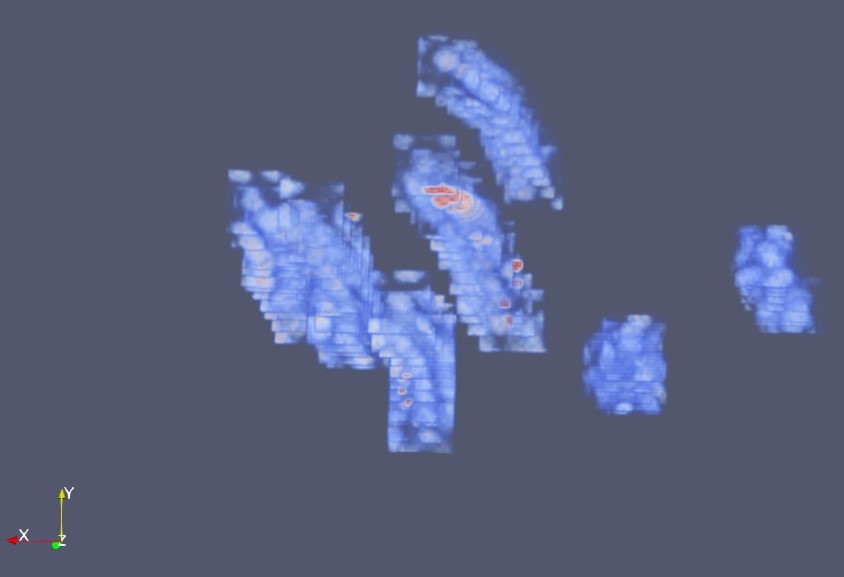}
	\end{minipage}
	\vspace{0.045cm}
	\begin{minipage}{0.44\textwidth}
		\centering
		\includegraphics[width=\textwidth]{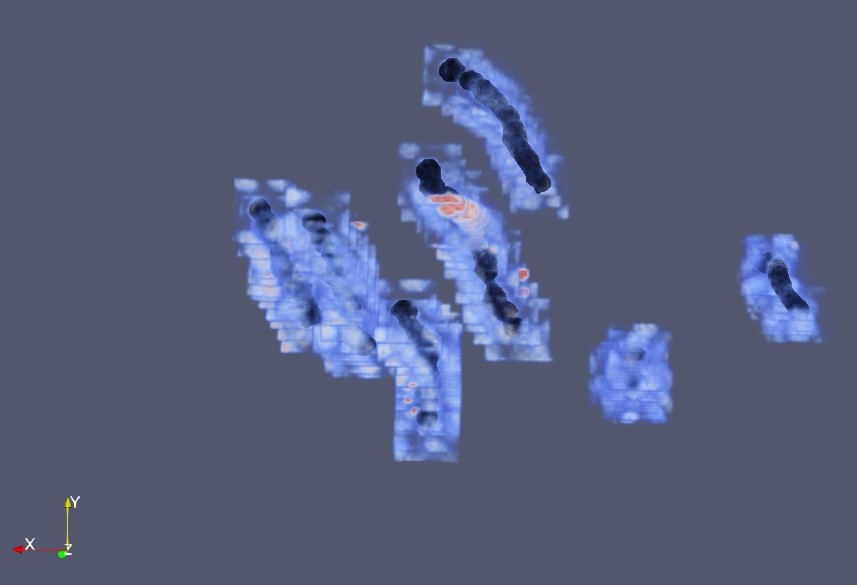}
	\end{minipage}
	\vspace{0.045cm}
	\caption{This figure shows the visualization of seven zebrafish cell nuclei moving in time.}
	\label{7_4d_orig_inputimage}
\end{figure}

\noindent Unlike in the previous four experiments where serial implementations were used, we will use the parallel implementation in subsequent experiments; thus, the entire 3D volumes of image intensities will be considered. We remark that the dataset used in the two subsequent experiments comprises several frames of 3D volumes. Each of these volumes has a dimension of $567\times577\times147$. Therefore, the maximum number of frames of the 3D volumes that can be processed at once in the cluster is 90. The reason for this maximum number is that the cluster has 1512 GB of memory available for computations. Additionally, to process the 90 frames of 3D volumes, with volume dimensions $567\times577\times147$, more than 1130 GB of memory is needed, and we decided
to limit the number of frames in our computations to a maximum of 70. Furthermore, to process these 70 frames of 3D volumes, with volume dimensions $567\times577\times147$, 1012 GB of memory was needed. This clearly shows that it may not be possible to process these images on a serial machine without parallel implementation utilizing the MPI.\\

\noindent In the fifth experiment (see \url{https://doi.org/10.5281/zenodo.5513118} for the videos of this experiment), we selected seven cell nuclei in 70 time frames within the zebrafish pectoral fin. The selected cell nuclei were clearly visible in all time frames. Thus, easy visualization of segmentation results in all time frames is the motivation for this selection. Additionally, in the two previous experiments of this section, we used an arbitrary number ``seven;'' hence, we have used seven in this experiment too.\\

\noindent In Figure \ref{orig_image_at_t040-070}, the first row shows the $3$D frames of the video at the time steps $1$ (first column), $20$ (second column), $40$ (third column), and $70$ (fourth column) and the second row shows the corresponding reconstruction of the selected seven nuclei images using (\ref{first}). Figure \ref{orig_image_at_t040-070} (see also, \url{https://doi.org/10.5281/zenodo.5513118}) shows that in each time frame, the seven selected cell nuclei, located nearly at the bottom of each image in the figure, were accurately reconstructed using the $4$D method. The segmentation results of these cell nuclei are colored in black at the bottom of each image in the second row of this figure.

\begin{figure}[h]
	\begin{minipage}{0.24\textwidth}
		\centering
		\includegraphics[width=\textwidth]{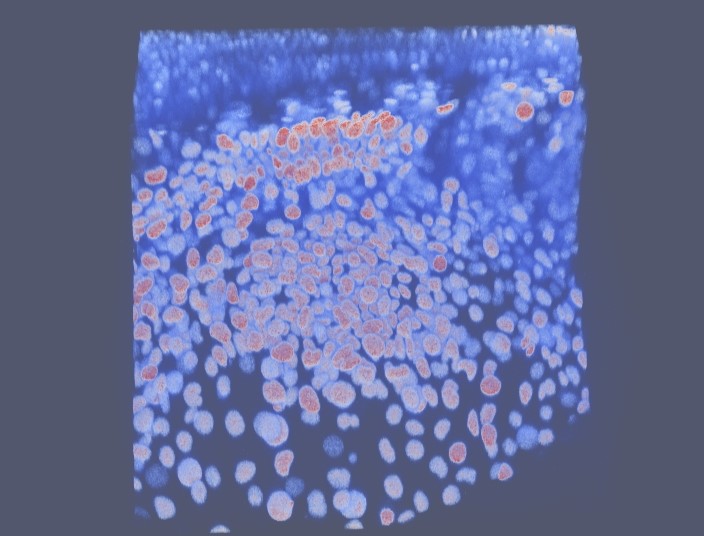}
	\end{minipage}
	\vspace{0.045cm}
	\begin{minipage}{0.24\textwidth}
		\centering
		\includegraphics[width=\textwidth]{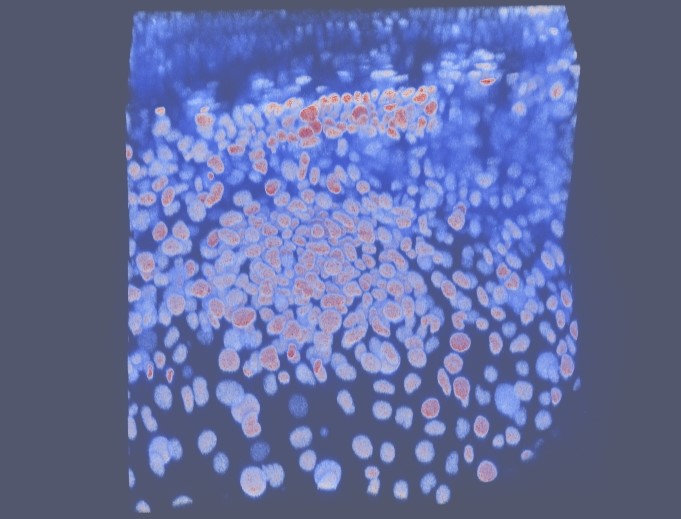}
	\end{minipage}
	\vspace{0.045cm}
	\begin{minipage}{0.24\textwidth}
		\centering
		\includegraphics[width=\textwidth]{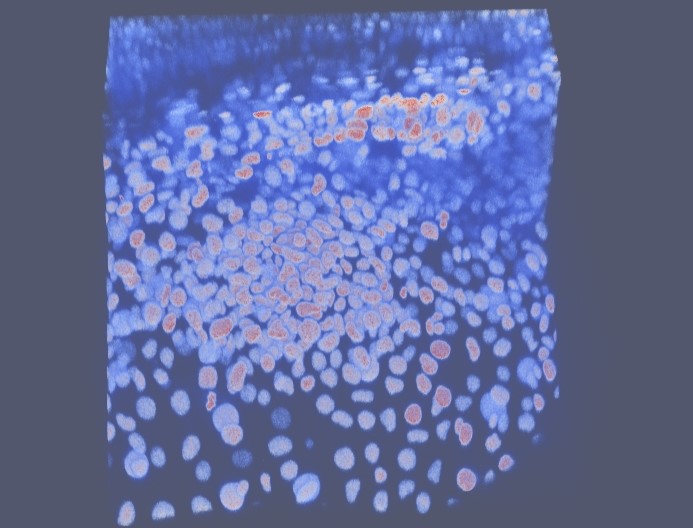}
	\end{minipage}
	\vspace{0.045cm}
	\begin{minipage}{0.24\textwidth}
		\centering
		\includegraphics[width=\textwidth]{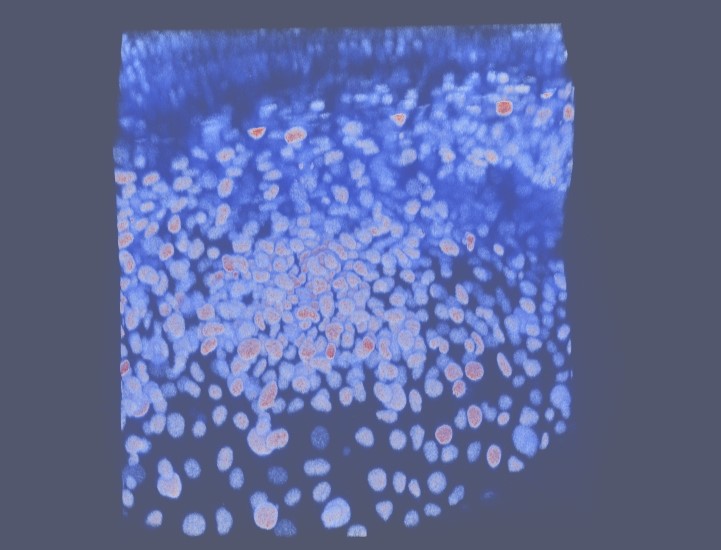}
	\end{minipage}
	\vspace{0.045cm}
	
	\begin{minipage}{0.24\textwidth}
		\centering
		\includegraphics[width=\textwidth]{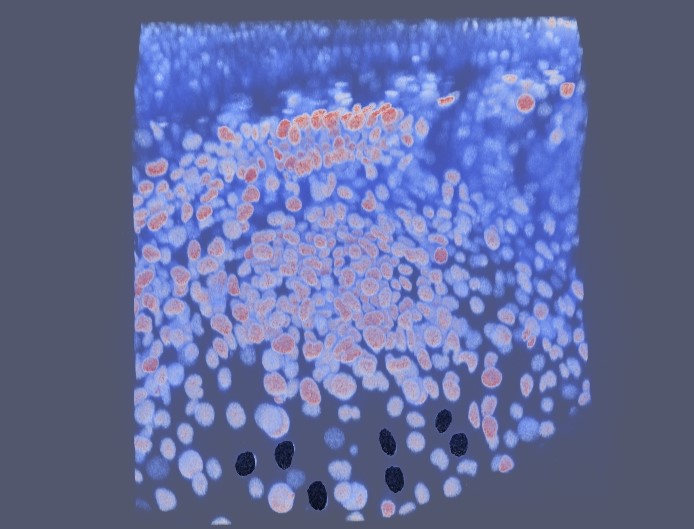}
	\end{minipage}
	\vspace{0.045cm}
	\begin{minipage}{0.24\textwidth}
		\centering
		\includegraphics[width=\textwidth]{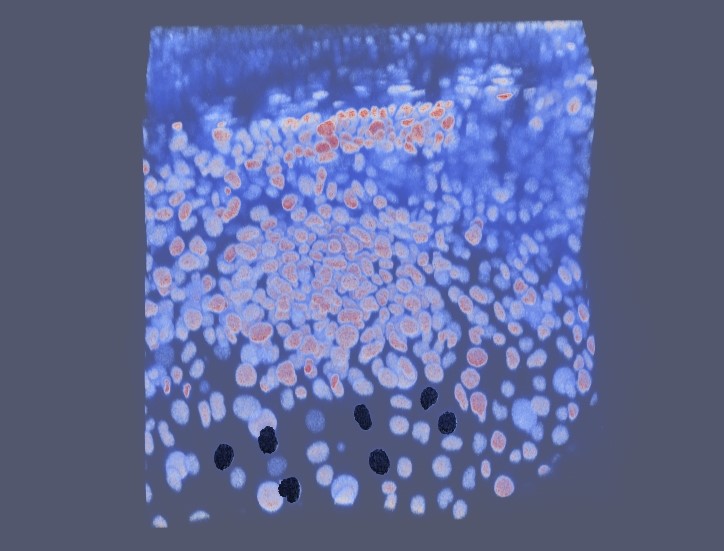}
	\end{minipage}
	\vspace{0.045cm}
	\begin{minipage}{0.24\textwidth}
		\centering
		\includegraphics[width=\textwidth]{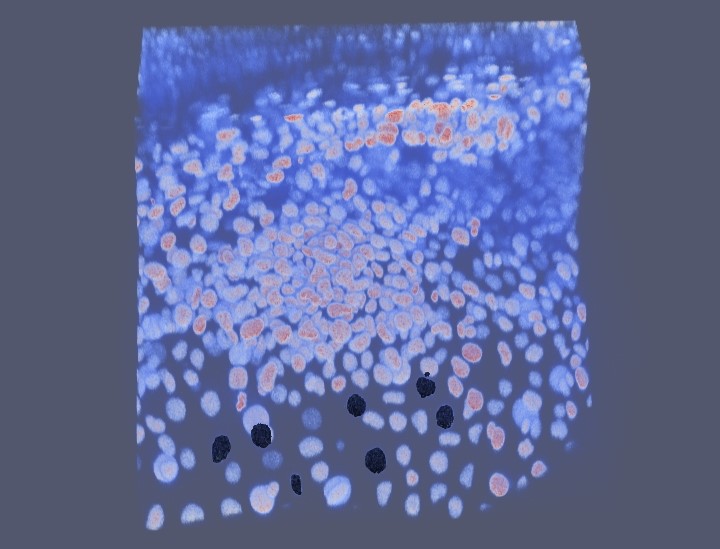}
	\end{minipage}
	\vspace{0.045cm}
	\begin{minipage}{0.24\textwidth}
		\centering
		\includegraphics[width=\textwidth]{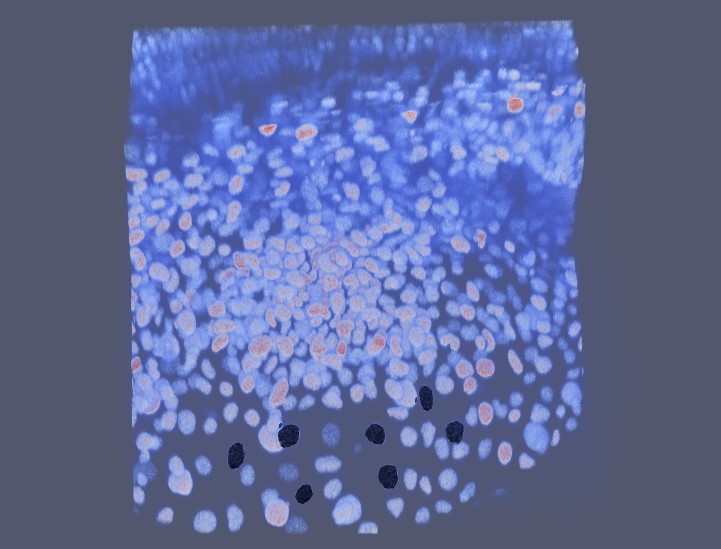}
	\end{minipage}
	\vspace{0.045cm}

	\caption{First row of this figure shows the $3$D frames of 3D+time microscopy images of  cell nuclei within the zebrafish pectoral fin at the time steps $1$ (first column), $20$ (second column), $40$ (third column), and $70$ (second column). The second row shows the corresponding reconstruction of the seven nuclei images using (\ref{first}).}
	\label{orig_image_at_t040-070}
\end{figure}

\noindent In the last experiment (see \url{https://doi.org/10.5281/zenodo.5513118} for the videos of this experiment), we reconstructed a group of selected cell nuclei in 70 time frames within the zebrafish pectoral fin. This cell population was selected by the biologists who provided the dataset. In Figure \ref{all_orig_image_at_t040-070}, the first row shows the $3$D frames of the video at the time steps $1$ (first column), $20$ (second column), $40$ (third column), and $70$ (fourth column) and the second row shows the corresponding reconstruction of the selected nuclei images using the $4$D method (\ref{first}). The segmentation results of this group of cell nuclei are colored in black at the center of each image in the second row of this figure. Furthermore, the selected group of cells are located inside the fin tissue and are covered by other cells nearer to the fin's surface. Consequently, it is not easy to visualize the segmentation results together with the original image intensity. This is why we selected the seven cell nuclei that are easily visualizable in the previous experiment. Moreover, Figure \ref{rttrack040-070} in Section \ref{celltracking} shows segmentation results of this experiment, which were used for cell tracking. Thus, we can deduce from these figures that the segmentation results appear to be correct.\\  

\noindent Throughout the experiments, the following recommended parameters were chosen automatically: $\delta = 0.5,$ $\vartheta = 0.5,$ $h = 0.01,$ $\tau = 0.01,$ $\varepsilon = 1,$ $\sigma = \frac{h^2}{8}$. The parameters $R = 12,$ $\alpha = 0.87,$ and $\beta = 0.13,$ are empirically chosen based on visual inspection of results or influence on the accuracy of tracking. Finally, $\theta$ is chosen based on available computational resources. For instance, in the experiments whose results are presented in Figures \ref{orig_image_at_t040-070} and \ref{all_orig_image_at_t040-070}, $\theta$ is $70$ and the maximum possible value for $\theta$ is $90$ due to memory availability.

\begin{figure}[h]
    \begin{minipage}{0.24\textwidth}
		\centering
		\includegraphics[width=\textwidth]{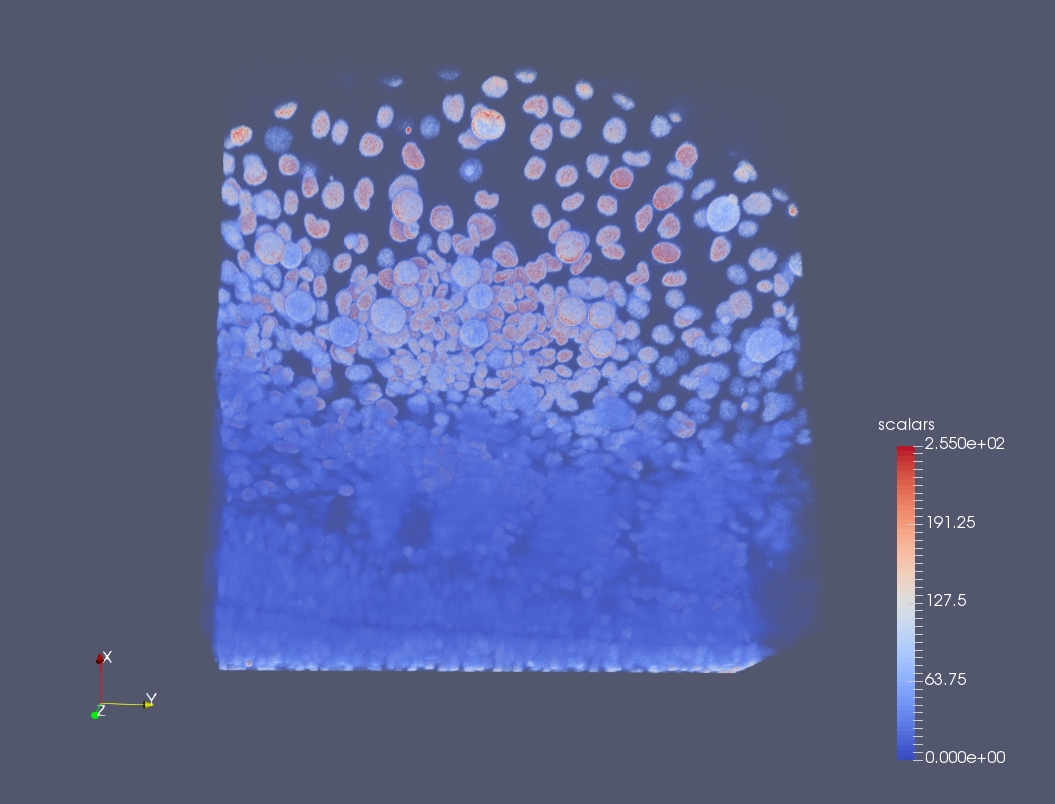}
	\end{minipage}
	\vspace{0.045cm}
	\begin{minipage}{0.24\textwidth}
		\centering
		\includegraphics[width=\textwidth]{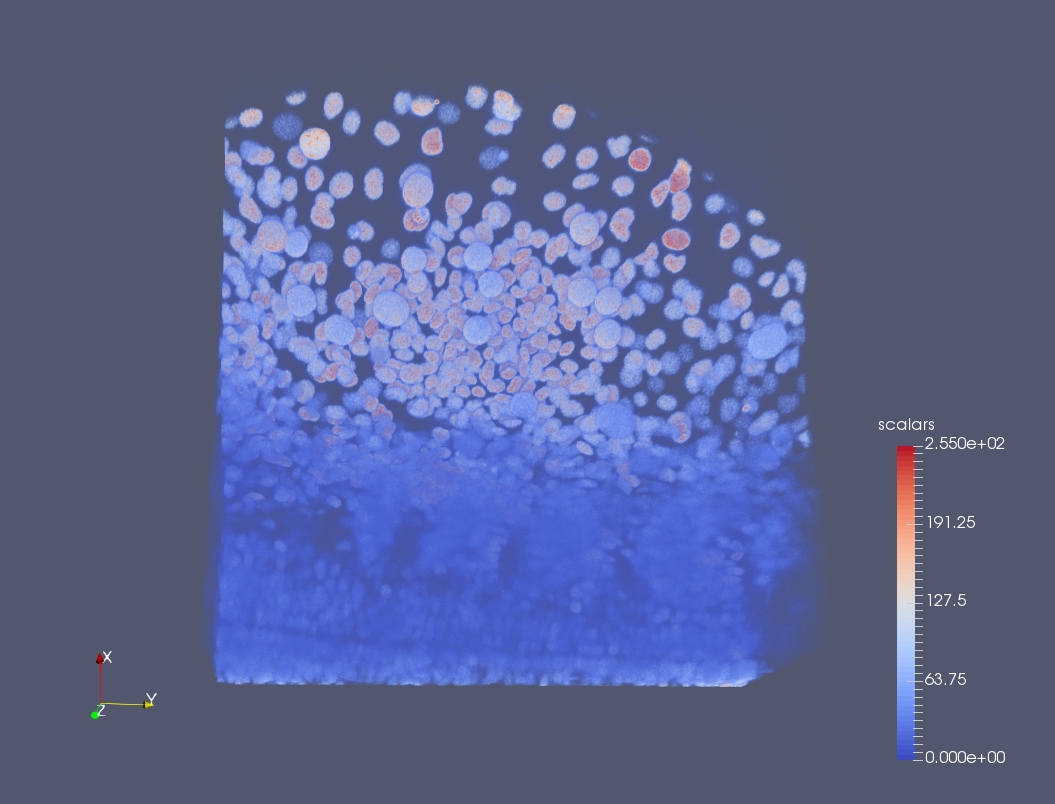}
	\end{minipage}
	\vspace{0.045cm}
	\begin{minipage}{0.24\textwidth}
		\centering
		\includegraphics[width=\textwidth]{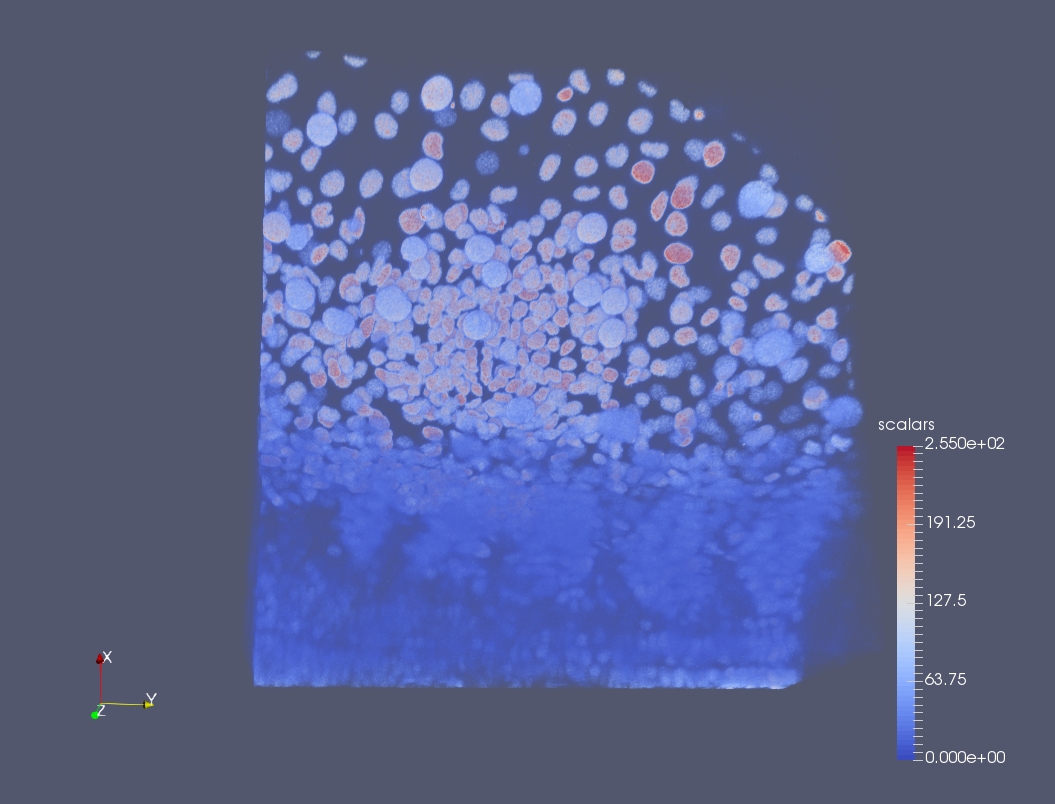}
	\end{minipage}
	\vspace{0.045cm}
	\begin{minipage}{0.24\textwidth}
		\centering
		\includegraphics[width=\textwidth]{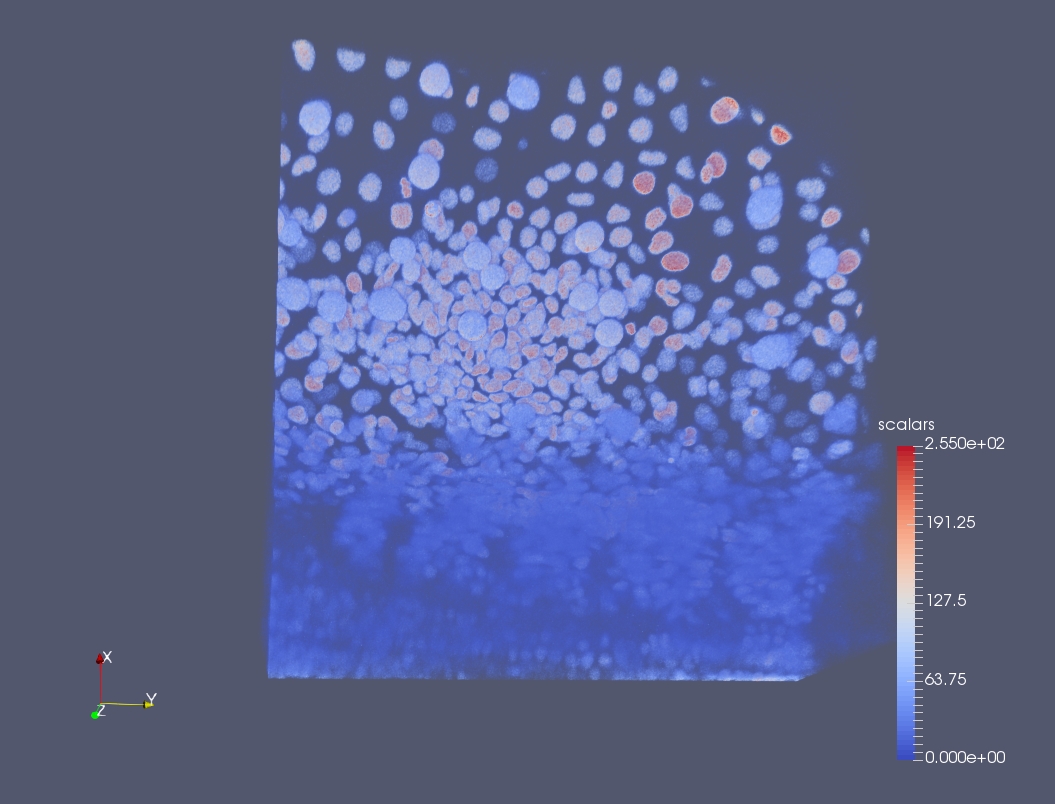}
	\end{minipage}
	\vspace{0.045cm}
	
	\begin{minipage}{0.24\textwidth}
		\centering
		\includegraphics[width=\textwidth]{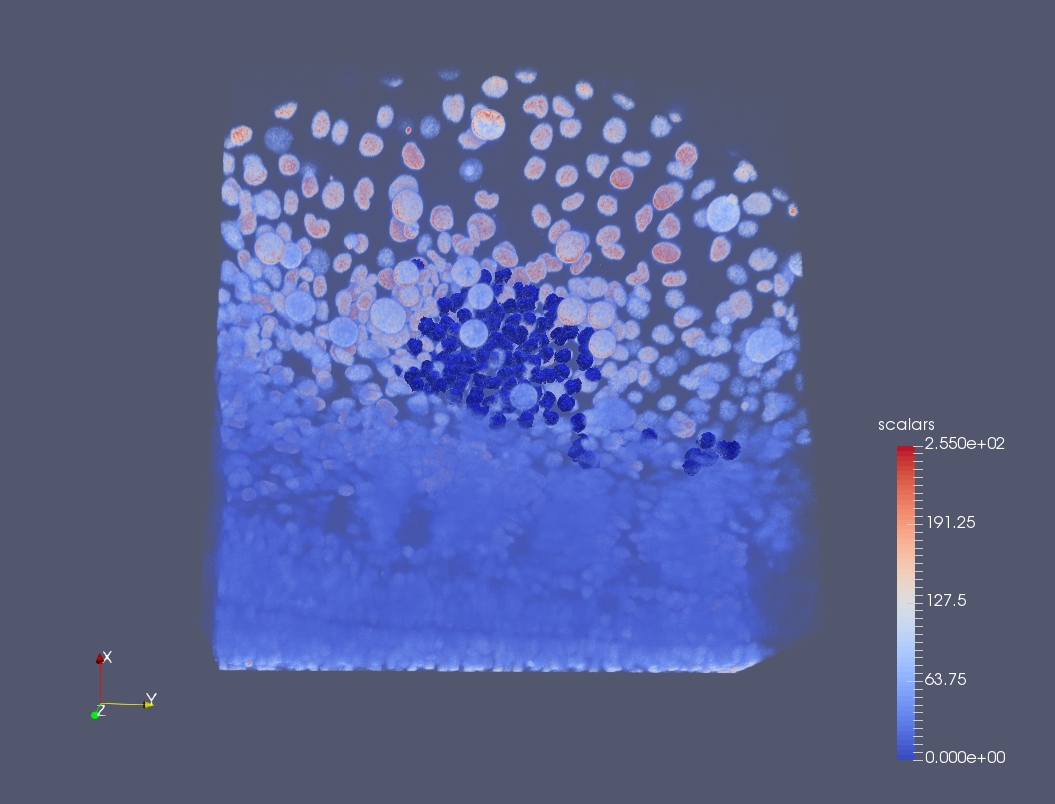}
	\end{minipage}
	\vspace{0.045cm}
	\begin{minipage}{0.24\textwidth}
		\centering
		\includegraphics[width=\textwidth]{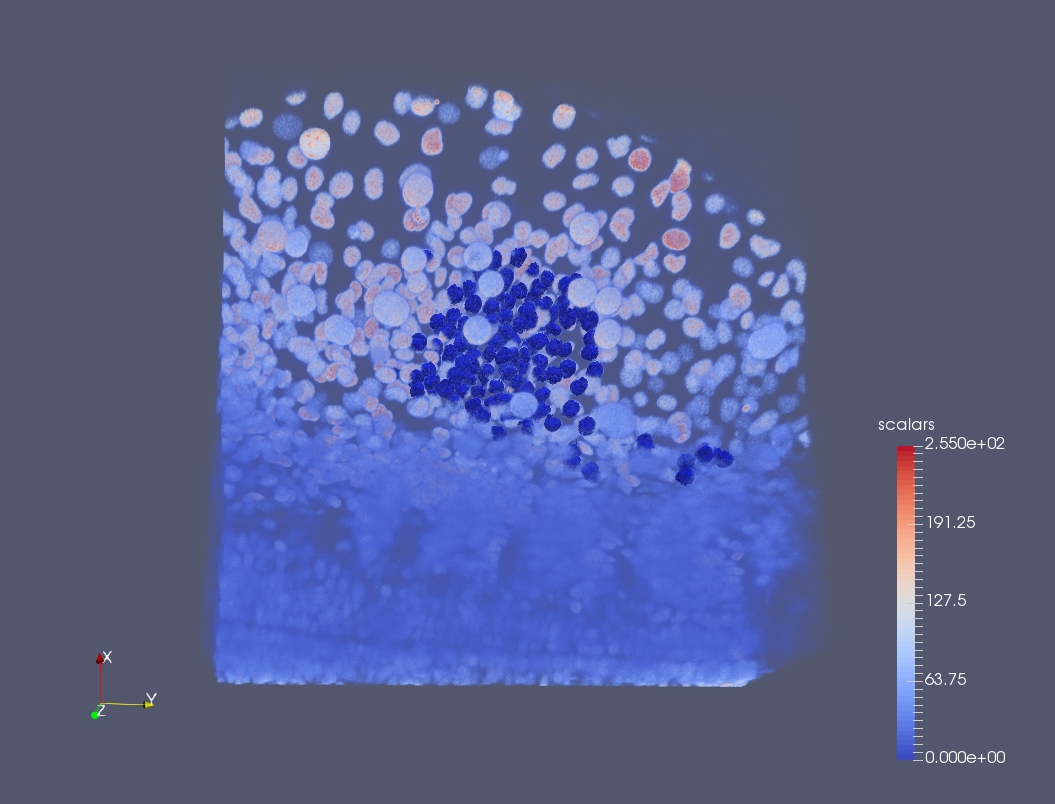}
	\end{minipage}
	\vspace{0.045cm}
	\begin{minipage}{0.24\textwidth}
		\centering
		\includegraphics[width=\textwidth]{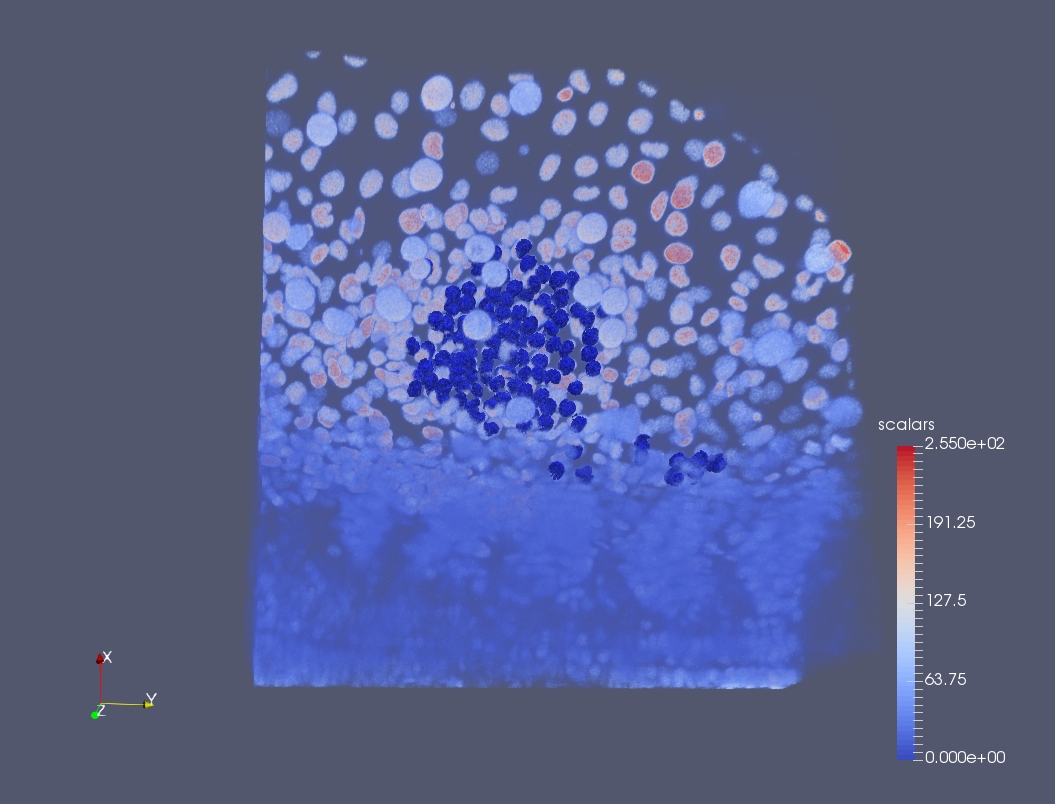}
	\end{minipage}
	\vspace{0.045cm}
	\begin{minipage}{0.24\textwidth}
		\centering
		\includegraphics[width=\textwidth]{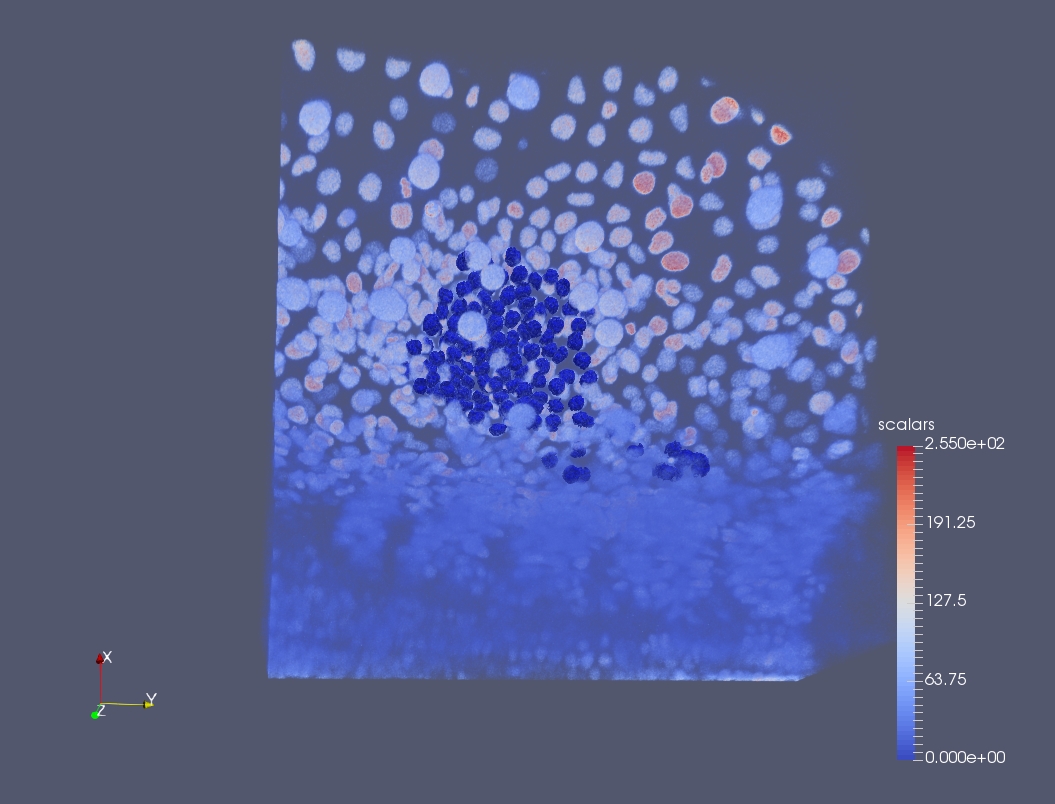}
	\end{minipage}
	\vspace{0.045cm}
	
	\caption{First row of this figure shows the $3$D frames of 3D+time microscopy images of  cell nuclei within the zebrafish pectoral fin at the time steps $1$ (first column), $20$ (second column), $40$ (third column), and $70$ (fourth column) and the second row shows the corresponding reconstruction of the nuclei images using (\ref{first}).}
	\label{all_orig_image_at_t040-070}
\end{figure}

\section{Cell tracking based on 4D segmentation}\label{celltracking}
\noindent In this section, the results of cell tracking based on the 4D segmentation method are presented. The cell tracking procedure is briefly described below, and details of the algorithm are given in \cite{park}; for previous works on cell tracking, see \cite{4dcelltracking1} and \cite{4dcelltracking2}.\\

\noindent After we obtain the 4D segmentation of 3D+time video, first, we label all voxels in all time frames belonging to any 3D simply connected segmented region; we refer to such 3D simply connected segmented region as “segmented cell.” The labeling distinguishes between the inner cell volumes and the “background” corresponding to the outside of the cells’ region. Additionally, inside the segmented cells, we compute a 3D distance from the segmented cell boundary. By detecting the voxel in which the maximal distance is achieved, we obtain an approximate cell center. The goal of tracking is to interconnect in time such approximate cell centers obtained for every cell in every time frame. We use an iterative backtracking approach to that goal, starting from the last time frame and going backward in time through all simply connected regions of 4D segmentation.  First, we project the voxel representing the cell center detected in the current time frame to spatially same voxel in the previous time frame. If the projected voxel belongs to some cell in the previous time frame, we find its center and connect two cell centers forming one section of the cell trajectory. If the projected voxel is outside of any segmented cell in the previous time frame, all voxels of the cell at the current time frame are inspected and checked whether its projection to the previous time frame is inside of any simply connected segmented region. If there is an overlap between the cells at the current and the previous time frame, we find the cell center of the overlapping region at the previous time frame and construct the section of partial trajectory again. If no overlap is found, the partial trajectory ends at the current time frame and does not continue further backward. After constructing all partial trajectories inside the 4D segmentation by the approach described above, we interconnect them if a suitable condition is fulfilled. We estimate the tangent of the partial trajectory in its first and last point and prolong it to the corresponding time frames. If the last/first point of another trajectory exists in the previous/next time frame in a closed neighborhood of the prolonged trajectory, we interconnect them. As we can see, the tracking procedure is quite simple, mainly due to the utilization of the 4D segmentation. 

\subsection{Numerical experiments} \label{num_tube}
\noindent In the first experiment (see \url{https://doi.org/10.5281/zenodo.5513089} for the complete video), results of segmentation by the 4D method (\ref{first}) were used as a basis to track the artificially generated spheres shown in Figure \ref{artificial_image_at_t1-20}. In Figure \ref{atrack}, $3$D frames of the sphere trajectories (in this case, straight lines) at the time frames $1$, $10$, and $20$ were shown. In this artificial dataset, we may recall that there are four spheres in time frames $1−3$, seven spheres in time frames $4−17$, and five spheres in time frames $18−20$. So, this explains why there are four, seven, and five spheres in this figure. In each of the three pictures, the straight lines show the trajectories of the spheres, and the square at the end of each line or trajectory shows the sphere's present time. The trajectories are correct because, from the construction of the dataset, all spheres move in straight lines. We note that the viewpoint of the images presented in this figure is different from the viewpoint of the images in Figure \ref{artificial_image_at_t1-20}. In presenting the result of this experiment, we choose this viewpoint because it has the best view of all trajectories.

\begin{figure}[h]
\centering
	\begin{minipage}{0.24\textwidth}
		\centering
		\includegraphics[width=\textwidth]{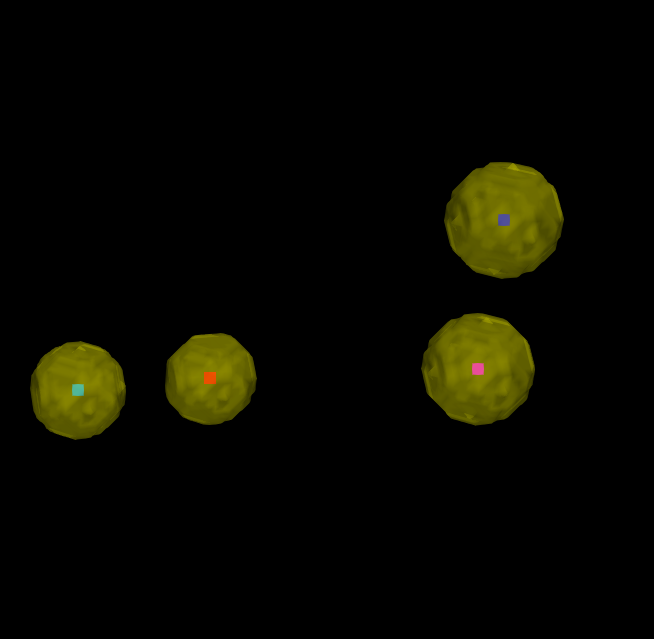}
	\end{minipage}
	\vspace{0.045cm}
	\begin{minipage}{0.24\textwidth}
		\centering
		\includegraphics[width=\textwidth]{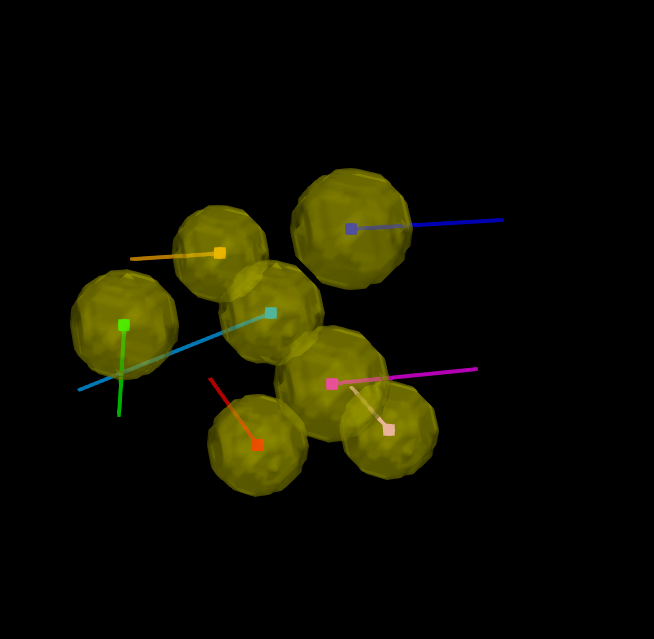}
	\end{minipage}
	\vspace{0.045cm}
	\begin{minipage}{0.24\textwidth}
		\centering
		\includegraphics[width=\textwidth]{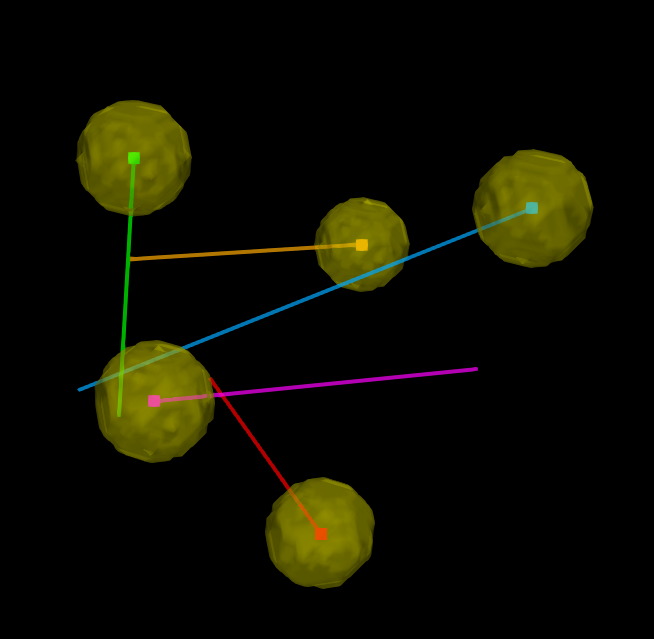}
	\end{minipage}
	\vspace{0.045cm}
	
	\caption{This figure shows $3$D frames of the sphere trajectories at time frames $1$ (first column), $10$ (second column), and $20$ (third column).}
	\label{atrack}
\end{figure}
 
\noindent In the second experiment (see \url{https://doi.org/10.5281/zenodo.5513118} for the complete video), results of segmentation by the 4D method were also used as a basis to track the 3D+time microscopy images of selected seven cell  nuclei  within  the  zebrafish  pectoral  fin shown in Figure \ref{orig_image_at_t040-070}. In Figure \ref{rtrack040-070}, $3$D frames of the trajectories of 3D+time microscopy images  of seven cell  nuclei  within  the  zebrafish  pectoral  fin  at  the time  steps $1$, $20$, $40$, and $70$ were shown. The trajectories of these tracked cells show the movement of the cells over time. Additionally, from the last frame (second column) in Figure \ref{rtrack040-070}, we see that these cells are not moving in straight lines. Instead, they move in an arbitrary random direction.\\

\begin{figure}[h]
\centering
	\begin{minipage}{0.44\textwidth}
		\centering
		\includegraphics[width=\textwidth]{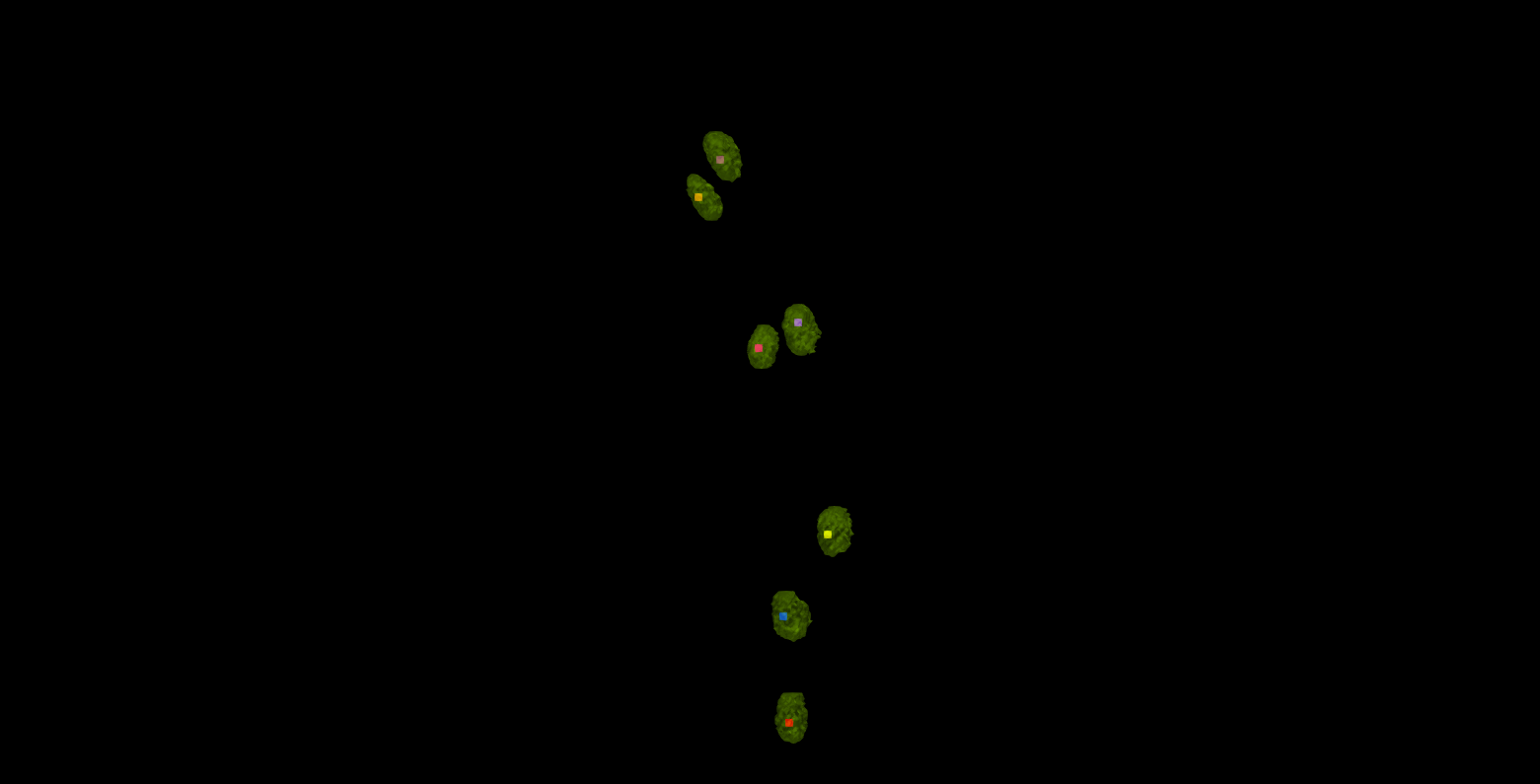}
	\end{minipage}
	\vspace{0.045cm}
	\begin{minipage}{0.44\textwidth}
		\centering
		\includegraphics[width=\textwidth]{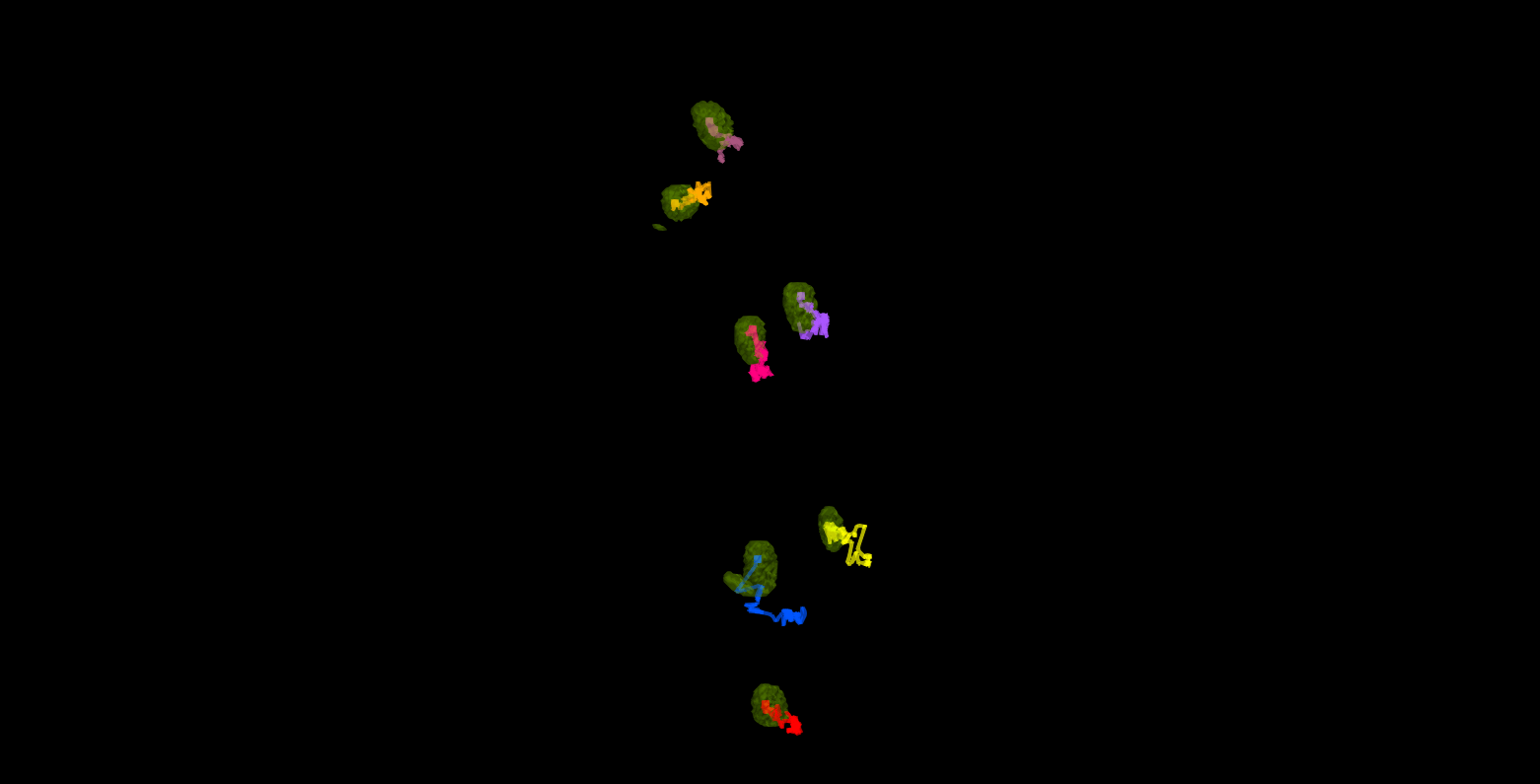}
	\end{minipage}
	\vspace{0.045cm}
	
	\caption{This figure shows $3$D frames of the trajectories of $3$D+time microscopy images of seven cell nuclei within the zebrafish pectoral fin at time frames $1$ (first column) and $70$ (second column).}
	\label{rtrack040-070}
\end{figure}

\noindent In the third experiment (see \url{https://doi.org/10.5281/zenodo.5513118} for the complete video), results of segmentation by our 4D method were again used as a basis to track the 3D+time microscopy images  of a group of selected cell nuclei in 70 time frames within  the  zebrafish  pectoral  fin shown in Figure \ref{all_orig_image_at_t040-070}. In Figure \ref{rttrack040-070}, $3$D frames of the trajectories of 3D+time images of five cell nuclei selected from the original population within the zebrafish pectoral fin at the time frames $1$, $20$, $40$, and $70$ were presented. The five selected cell trajectories are the ones that showed significant cell movement in the selected population. Additionally, the trajectories of these five selected cells are colored yellow, blue, red, cyan, and pink. Furthermore, in the second image of Figure \ref{rttrack040-070}, the initial positions of the nuclei were colored gray, while the current positions are colored green.

\begin{figure}[h]
\centering
	\begin{minipage}{0.44\textwidth}
		\centering
		\includegraphics[width=\textwidth]{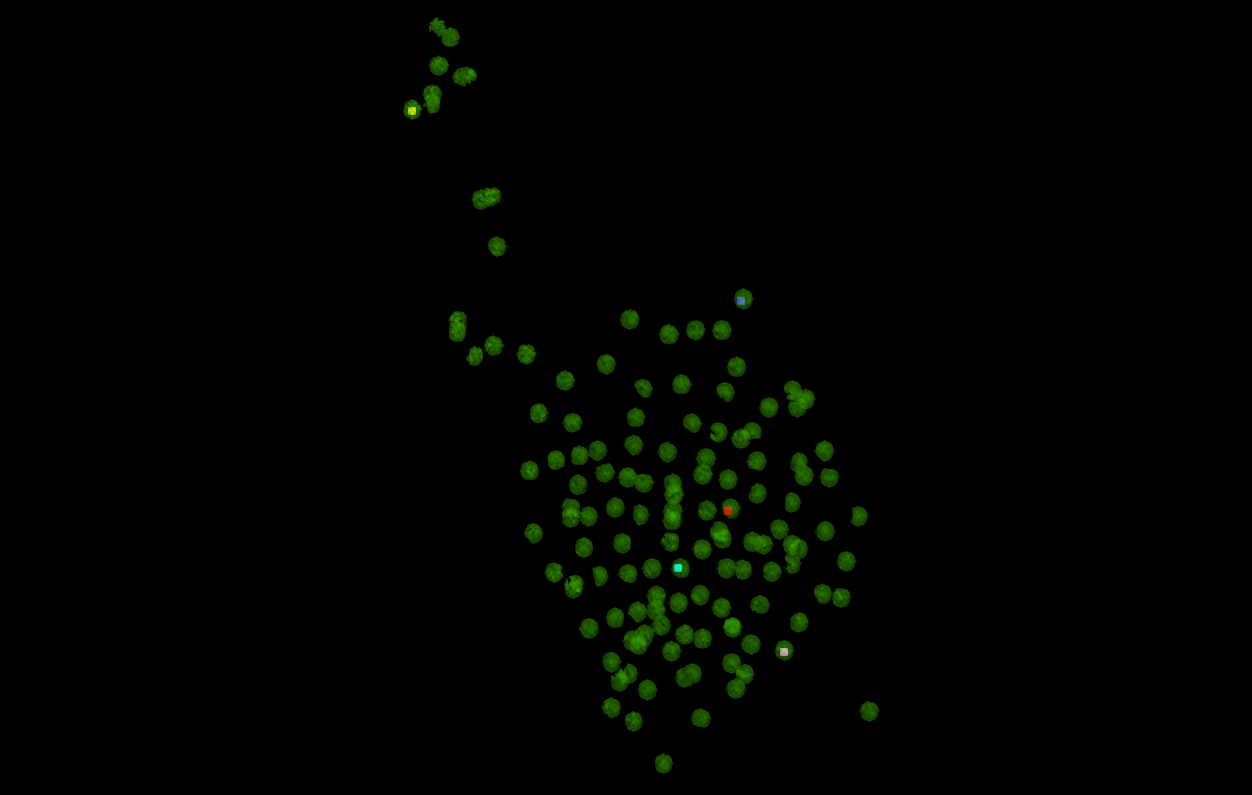}
	\end{minipage}
	\vspace{0.045cm}
	\begin{minipage}{0.44\textwidth}
		\centering
		\includegraphics[width=\textwidth]{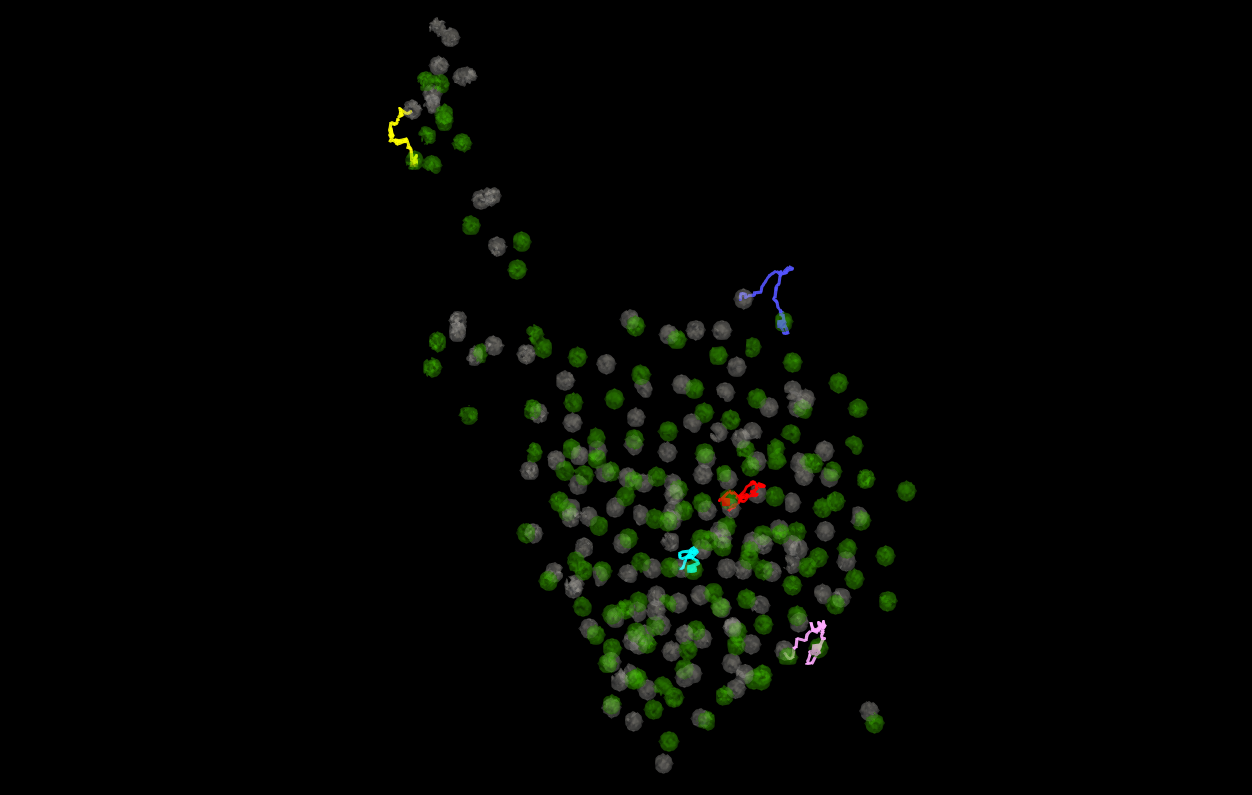}
	\end{minipage}
	\vspace{0.045cm}
	
	\caption{This figure shows $3$D frames of the trajectories of 3D+time images of five cell nuclei within the zebrafish pectoral fin at time frames $1$ (first row) and $70$ (second row).}
	\label{rttrack040-070}
\end{figure}

\noindent Finally, from the results presented in Figures \ref{atrack}, \ref{rtrack040-070}, and \ref{rttrack040-070}, we can conclude that our 4D method (\ref{first}) serves as basis for cell tracking.

\section{Conclusion}
\noindent We suggested a segmentation approach that is based on the surface evolution governed by a nonlinear PDE. The approach is a generalization of the SUBSURF model \cite{subsurf}. This generalization is achieved by defining the input to the edge detector function as the weighted sum of norm of presmoothed 4D image and norm of presmoothed thresholded 4D image in a local neighborhood. We introduced and studied a numerical method for the solution of the studied model. The numerical method is based on a finite volume approach and used the reduced diamond cell approach to approximate the gradient of the solution. For numerical discretization, we used a semi-implicit finite volume scheme. We proved that the numerical scheme employed is unconditionally stable. As a special case in the 3D framework, we applied the studied 4D method to 3D image segmentation of cell nuclei and membranes within the pectoral fin of developing zebrafish embryo and cell nuclei in developing mouse embryo. Furthermore, we employed the OpenMP and MPI for the studied model's efficient (parallel) computer implementation. Several numerical examples were presented to demonstrate how the model works both for 3D and 4D images. Moreover, we included the cell tracking results to show how our new method serves as a basis for tracking. All parallel computations were done on a Linux cluster on a server comprising six computational nodes. Each computational node has $32$ processors and 
$252$ GB of memory; thus, the cluster has 192 processors and 1512 GB of memory available for computations. In the real application presented in the fifth and sixth experiments of Section \ref{chapterrealapp}, to process the 3D+time microscopy images with dimensions $567\times577\times147\times70$, 1012 GB of memory was needed. This clearly shows that it may not be possible to process these images on a serial machine without parallel implementation utilizing the MPI. Finally, from the results presented, we conclude that the mathematical model (\ref{first}) is a useful and successful generalization of the classical SUBSURF model.

\begin{acknowledgements}
This work received funding from the European Union's Horizon 2020 research and innovation programme under the Marie Sk\l{}odowska-Curie grant agreement No. 721537, and the projects APVV-19-0460 and VEGA
1/0436/20.
\end{acknowledgements}
\section*{Conflict of interest}
 The authors declare that they have no conflict of interest.

\begin{thebibliography}{00}
\bibitem{amdahl}
G. M. Amdahl, 
    \newblock Validity of the Single-Processor Approach to Achieving Large Scale Computing Capabilities, 
    \newblock \emph{AFIPS Conference Proceedings}, (30): 483–485.
\bibitem{an} 
Y. Aoyama, J. Nakano, 
    \newblock RS/6000 SP: Practical MPI Programming,
    \newblock \emph{IBM www.redbooks.ibm.com}, 1999.

\bibitem{sddm1}
K. Briggs, 
    \newblock Diagonally Dominant Matrix,
   \newblock \emph{From MathWorld--A Wolfram Web Resource, created by Eric W. Weisstein.} https://mathworld.wolfram.com/DiagonallyDominantMatrix.html

\bibitem{baf}
L. Bungert, V. Aizinger, M. Fried, 
    \newblock A Discontinuous Galerkin Method for the Subjective Surfaces Problem,
   \newblock \emph{J Math Imaging Vis (2017) 58:147-161.}

\bibitem{cmssga}
S. Corsaro, K. Mikula, A.Sarti, F. Sgallari,
    \newblock Semi-implicit co-volume method in 3D image segmentation,
    \newblock \emph{SIAM J. Sci. Comput.}, vol. 28, num. 6, p. 2248 – 2265, 2006.

\bibitem{levelset1}
A. Dervieux, F. Thomasset, 
    \newblock A finite element method for the simulation of a Rayleigh-Taylor instability,
    \newblock \emph{In: Rautmann R. (eds) Approximation Methods for Navier-Stokes Problems.
    Lecture Notes in Mathematics,} Springer, Berlin, Heidelberg, 1980, vol 771.
    
\bibitem{levelset2}
A. Dervieux, F. Thomasset,
    \newblock Multifluid incompressible flows by a finite element method,
    \newblock \emph{ In: Reynolds W. C., MacCormack R. W. (eds) Seventh International Conference on Numerical Methods in Fluid Dynamics. 
    Lecture Notes in Physics,} Springer, Berlin, Heidelberg, 1981, vol 141.  

\bibitem{segm5}
H. Digabel, C. Lantuejoul, 
    \newblock Iterative Algorithms,
    \newblock \emph{In Proc. of 2nd European Symposium on Quantitative Analysis of Micro-structures in Material Science}, pages 85 – 99, Stuttgart, 1978. Riederer.

\bibitem{Dyballaetal}
S. Dyballa, T. Savy, P. Germann, K. Mikula, M. Remesikova, R. Špir, A. Zecca, N. Peyriéras, C. Pujades,
    \newblock Distribution of neurosensory progenitor pools during inner ear morphogenesis unveiled by cell lineage reconstruction,
    \newblock \emph{eLife 2017;6:e22268}, DOI: 10.7554/eLife.22268.

\bibitem{evans}
L. C. Evans, J. Spruck,
    \newblock Motion of level sets by mean curvature I,
    \newblock \emph{J. Differential Geom.}, 33 (1991), 635 – 681. doi:10.4310/jdg/1214446559.
   
 \bibitem{segm2}
A. X. Falcao, J. K. Udupa,
    \newblock Segmentation of 3D Objects using Live Wire,
    \newblock \emph{In Proc. of SPIE Medical Imaging}, volume 3034(1), pages 228 – 239, 1997.

\bibitem{segm3}
A. X. Falcao, J. K. Udupa, S. Samarasekera, S. Sharma, B. E. Hirsch, R. de Alencar Lofufo,
    \newblock User-steered image segmentation paradigms: Live-wire and live-lane,
    \newblock \emph{Graphics Models and Image Processing}, 60(4):223 – 260, 1998.

\bibitem{segm4}
A. X. Falcao, K. Jayaram, J. K. Udupa, F. K. Miyazawa,
    \newblock An ultra-fast usersteered image segmentation paradigm: Live-wire-on-the fly,
    \newblock \emph{Proc. of SPIE Medical Imaging}, 3661: 184 – 191, 1999.
    
 \bibitem{faure}
E. Faure, T. Savy, B. Rizzi, C. Melani, O. Drbl\'{i}kov\'{a}, D. Fabreg\`{e}s, R. \v{S}pir, M. Hammons, R. \v{C}underl\'{i}k,
	    G. Recher, B. Lombardot, L. Duloquin, I. Colin, J. Koll\'{a}r, S. Desnoulez, P. Affaticati, B. Maury, A. Boyreau,
	    J. Nief, P. Calvat, P. Vernier, M. Frain, G. Lutfalla, Y. Kergosien, P. Suret, M. Reme\v{s}\'{i}kov\'{a}, R. Doursat, A. Sarti,
	    K. Mikula, N. Peyri\'{e}ras, P. Bourgine, 
    \newblock An algorithmic workflow for the automated processing of 3D+time microscopy imaging of developing organisms and reconstruction of their cell lineage,
    \newblock \emph{Nature Communications}, 7 (2016), 8674.
  

\bibitem{hms}  
A. Handlovi\v{c}ov\'{a}, K. Mikula, F. Sgallari,
    \newblock Semi-implicit complementary volume scheme for
    solving level set like equations in image processing
    and curve evolution,
    \newblock \emph{Numerische
    Mathematik}, (2003) 93: 675 – 695. doi: 10.1007/s002110100374.

\bibitem{segm1}
M. Kass, A. Witkin, D. Terzopoulos,
    \newblock Snakes: active contour models-
    \newblock \emph{International Journal of Computer Vision}, 1(4): 321 – 331, 1988.

\bibitem{koend}
J. Koenderink, 
    \newblock The structure of images, 
    \newblock \emph{Biological Cybernetics}, vol. 50, pp. 363–370, 1984.
    
   
\bibitem{kmnapmr}
K. Mikula, N. Peyri\'{e}ras, M. Reme\v{s}\'{i}kov\'{a} O. Sta\v{s}ov\'{a} 
    \newblock Segmentation of 3D cell membrane images by PDE methods and its applications,
    \newblock \emph{Computers in Biology and Medicine}, Vol. 41, No. 6 (2011) pp. 326 – 339.
    
\bibitem{kmikula}
K. Mikula, M. Reme\v{s}\'{i}kov\'{a}, 
    \newblock Finite volume schemes for the generalized subjective surface equation in image segmentation,
    \newblock \emph{Kybernetika}, Vol. 45, No. 4 (2009) pp. 646 – 656.
      
\bibitem{4dpaper}
K. Mikula, N. Peyri\'{e}ras, M. Reme\v{s}\'{i}kov\'{a}, M. Sm\'{i}\v{s}ek,
    \newblock 4D Numerical Schemes for Cell Image Segmentation and Tracking. In: Fořt J., Fürst J., Halama J., Herbin R., Hubert F. (eds) Finite Volumes for Complex Applications VI Problems \& Perspectives,
    \newblock \emph{Springer Proceedings in Mathematics}, 4(2011). https://doi.org/10.1007/978-3-642-20671-9\_73 

    
\bibitem{4dcelltracking1}    
K. Mikula, R. \v{S}pir, M.Sm\'{i}\v{s}ek, E. Faure, N. Peyri\'{e}ras, 
\newblock Nonlinear PDE based numerical methods for cell tracking in zebrafish embryogenesis, 
\newblock \emph{Applied Numerical Mathematics}, Vol. 95 (2015) pp. 250 – 266

\bibitem{4dcelltracking2}
K. Mikula, R. \v{S}pir, N. Peyri\'{e}ras,
\newblock Parallelization and validation of algorithms for zebrafish cell lineage tree reconstruction from big 4D image data, 
\newblock \emph{Acta Polytechnica Hungarica}, Vol. 14/5 (2017) pp. 65 – 84.

	      

\bibitem{msarti}  
K. Mikula, A. Sarti,
    \newblock Parallel co-volume  subjective surface method for 3D medical image segmentation,
    \newblock \emph{Parametric and Geometric Deformable Models: An application in Biomaterials and Medical Imagery, Volume-II, Springer Publishers, (Eds. Jasjit S. Suri and Aly Farag), ISBN: 0-387-31204-8}, 2007, pp. 123 – 160.
   
\bibitem{app2}  
K. Mikula, A. Sarti, F. Sgallari, 
    \newblock Co-volume level set method in subjective surface based medical image segmentation,
    \newblock \emph{in: Handbook of Medical Image Analysis: Segmentation and Registration Models (J. Suri et al., Eds.), Springer, New York}, 
     2005, pp. 583–626.

\bibitem{mittal}
S. Mittal,
    \newblock A Study of Successive Over-relaxation ( SOR ) Method Parallelization Over Modern HPC Languages,
    \newblock \emph{Int. J. High Perform. Comput. Networking}, Vol. 7, Num. 4, pp. 292–298, 2015.
 
\bibitem{oshersethian}
S. Osher, J. A. Sethian,
    \newblock Fronts propagating with curvature-dependent speed: Algorithms based on Hamilton-Jacobi formulations,
    \newblock \emph{Journal of Computational Physics}, Vol. 79, Num. 1, pp. 12 – 49, 1988.
    
\bibitem{segm7}
N. A. Otsu, 
    \newblock A Threshold Selection Method from Gray-Level Histograms,
    \newblock \emph{IEEE Transactions on Systems, Man, and Cybernetics}, 9(1):62 – 66, 1979. 

\bibitem{park}  
S. Park, 
    \newblock Tracking cells in 3D + time image sequences --- written part of dissertation exam,
    \newblock \emph{Faculty of Civil Engineering, Slovak University of Technology in Bratislava}, 2020.    

\bibitem{parketal}  
S. Park, T. Sipka, Z. Kriva, M. Ambroz, M. Kollár, B. Kósa, M. Nguyen-Chi, G. Lutfalla, K. Mikula,
    \newblock Macrophage image segmentation by thresholding and subjective surface method,
    \newblock \emph{Tatra Mountains Mathematical Publications, Vol 75 (2020) pp. 103-120}.    
    
\bibitem{pm}  
P. Perona, J. Malik, 
    \newblock Scale-space and edge detection using anisotropic diffusion,
    \newblock \emph{IEEE Transactions on Pattern Analysis and Machine Intelligence}, 12 (1990), 629 – 639. doi:10.1109/34.56205.
    
\bibitem{segm8}  
B. Preim, D. Bartz,
    \newblock Image Analysis for Medical Visualization,
    \newblock \emph{Visualization in Medicine}, 83 – 133, (2007). doi:10.1016/b978-012370596-9/50007-9.  

\bibitem{sartietal}
A. Sarti, R. Malladi, J. A. Sethian, 
    \newblock Subjective Surfaces: A Geometric Model for Boundary Completion,
    \newblock \emph{International Journal of Computer Vision}, Vol. 46 Num. 3, pp. 201 – 221, 2002.

 \bibitem{subsurf}
A. Sarti, R. Malladi, J. A. Sethian,
    \newblock Subjective Surfaces: A Method for Completing Missing Boundaries,
    \newblock \emph{PNAS}, Vol. 12, Num. 97, pp. 6258 – 6263, 2000.

\bibitem{sethianb}
J. A. Sethian, 
    \newblock Level Set Methods and Fast Marching Methods: Evolving Interfaces in Computational Geometry, Fluid Mechanics, Computer Vision, and Material Science,
    \newblock \emph{Cambridge University Press}, New York, 1999.
    
\bibitem{sddm2}
R. D. Skeel,
    \newblock Numerical Linear Algebra,
    \newblock \emph{Purdue University}, (2006), pp. 1–41. https://www.cs.purdue.edu/homes/skeel/CS515/1.pdf
    
    
\bibitem{ubaetal}  
M. O. Uba, K. Mikula, Z. Kriv\'{a}, H. Nguyen, T. Savy, E. Kardash, N. Peyri\'{e}ras,
    \newblock 3D Cell Image Segmentation by Modified Subjective Surface Method,
    \newblock \emph{Tatra Mountains Mathematical Publications}, (2020) 75, 147 – 162. doi: 10.2478/tmmp-2020-0010. 
    
\bibitem{ubaetalm}  
M. O. Uba, K. Mikula, Z. Kriv\'{a}, H. Nguyen, T. Savy, E. Kardash, N. Peyri\'{e}ras,
    \newblock Application of Modified Subjective Surface Method to 3D Cell Membrane Image Segmentation,
    \newblock \emph{ALGORITMY 2020, Conference on Scientific Computing}, Podbanske, Slovakia, 2020, 
    \newblock \emph{Proceedings of contributed papers}, 51 – 60.
 
\bibitem{segm6}
L. Vincent, P. Soille, 
    \newblock Watersheds in Digital Spaces: an Efficient Algorithm Based on Immersion Simulations,
    \newblock \emph{IEEE Transactions on Pattern Analysis and Machine Intelligence}, 13(6):583 – 598, 1991.

\bibitem{witkin}
P. Witkin, 
    \newblock Scale-space filtering, 
    \newblock \emph{in Proc. 8th Int. Joint Conf. Art. Intel l.}, (Karlsruhe, West Germany), pp. 1019–1022, Aug. 1983.

\bibitem{zcrmbmpa}  
C. Zanella, M. Campana, B. Rizzi, C. Melani, G. Sanguinetti, P. Bourgine, K. Mikula, N. Peyrieras, A. Sarti,
    \newblock Cells Segmentation from 3-D Confocal Images of Early Zebrafish Embryogenesis,
    \newblock \emph{IEEE Transactions on Image Processing}, (2010) Vol.19, No.3 pp. 770 – 781.
    
\end{thebibliography}

\renewcommand{\bibname}{References}



\end{document}